%%%%%%%%%%%%%%%%%%%%%%%%%%%%%%%%%%%%%%%%%%%%%%%%%%%%%%%%%%
%  Noncommutative McKay Correspondence II
%
% %%%%%%%%%%%%%%%%%%%%%%%%%%%%%%%%%%%%%%%%%%%%%%%%%%%%%%%%%%%%
\documentclass{amsart}
\usepackage{latexsym,amsxtra,amscd,ifthen, amsmath,color,url}
\usepackage[usenames,dvipsnames]{xcolor} \usepackage{amsfonts}
\usepackage{verbatim} \usepackage{amsmath} \usepackage{amsthm}
\usepackage{amssymb}
\usepackage[linkcolor=red,citecolor=blue,urlcolor=blue]{hyperref}
\usepackage[notcite,notref,final]{showkeys}

\usepackage{pgf} \usepackage{tikz} \usetikzlibrary{arrows,automata}
\usepackage[latin1]{inputenc} \usepackage{verbatim}

\numberwithin{equation}{section} \theoremstyle{plain}
\newtheorem{theorem}{Theorem}[section]
\newtheorem{lemma}[theorem]{Lemma}

\newtheorem{proposition}[theorem]{Proposition}
\newtheorem{hypothesis}[theorem]{Hypothesis}
\newtheorem{stand-hypothesis}[theorem]{Standing Hypothesis}
\newtheorem{corollary}[theorem]{Corollary}
\newtheorem{conjecture}[theorem]{Conjecture}

 \theoremstyle{definition}
\newtheorem{definition}[theorem]{Definition}
\newtheorem{example}[theorem]{Example}

\newtheorem{remark}[theorem]{Remark}

\mathchardef\mhyphen="2D

\makeatletter              % This sequence of commands will
\let\c@equation\c@theorem  % incorporate equation numbering
                            % into theorem numbering scheme
                            % 
\makeatother

\newcommand{\e}{\mathrm e}

\DeclareMathOperator{\coker}{coker}

\DeclareMathOperator{\hdet}{hdet} 
 
\DeclareMathOperator{\gldim}{{\sf gldim}}
\DeclareMathOperator{\repdim}{{\sf repdim}} 

\DeclareMathOperator{\uHom}{\underline{Hom}}
\DeclareMathOperator{\uEnd}{\underline{End}}
\DeclareMathOperator{\uExt}{\underline{Ext}}
\DeclareMathOperator{\uTor}{\underline{Tor}}
\DeclareMathOperator{\uotimes}{\underline{\otimes}}
\DeclareMathOperator{\Ext}{Ext} \DeclareMathOperator{\Tor}{Tor}

 \DeclareMathOperator{\injdim}{{\sf injdim}}
 
\DeclareMathOperator{\End}{End} 
 \DeclareMathOperator{\D}{{\sf D}}
\DeclareMathOperator{\Hom}{Hom} \DeclareMathOperator{\RHom}{RHom}
\DeclareMathOperator{\Grm}{{\sf Grm}} 
\DeclareMathOperator{\grm}{{\sf grm}} 
\DeclareMathOperator{\Prm}{{\sf Prm}}
\DeclareMathOperator{\prm}{{\sf prm}}
\DeclareMathOperator{\Grprm}{{\sf Grprm}}
\DeclareMathOperator{\grprm}{{\sf grprm}}
 
\DeclareMathOperator{\dep}{{\sf dep}} 
\DeclareMathOperator{\cd}{{\sf cd}}

\DeclareMathOperator{\Mod}{{\sf Mod}} 
 
\DeclareMathOperator{\Xyz}{{\sf Xyz}} \DeclareMathOperator{\xyz}{{\sf
xyz}} 
\DeclareMathOperator{\add}{{\sf add}}

\newcommand{\fm}{\mathfrak{m}} 
 
\newcommand{\op}{\textup{op}}

\newcommand{\kk}{\Bbbk}

\newtheorem{introthm}{Theorem}

\begin{document}

\title[McKay Correspondence for Hopf actions, II]{McKay
Correspondence for semisimple Hopf actions on regular graded algebras, II}
\author{K. Chan, E. Kirkman, C. Walton and J. J. Zhang}

\address{Chan: Department of Mathematics, Box 354350, University of
Washington, Seattle, Washington 98195, USA}

\email{kenhchan@math.washington.edu}

\address{Kirkman: Department of Mathematics, P. O. Box 7388, Wake Forest
University, Winston-Salem, North Carolina 27109, USA}

\email{kirkman@wfu.edu}

\address{Walton: Department of Mathematics, Temple University, 
Philadelphia, Pennsylvania 19122, USA}

\email{notlaw@temple.edu}

\address{Zhang: Department of Mathematics, Box 354350, University of
Washington, Seattle, Washington 98195, USA}

\email{zhang@math.washington.edu}

\begin{abstract} 
We continue our study of the McKay Correspondence for grading preserving  
actions of semisimple  Hopf algebras $H$ on (noncommutative) Artin-Schelter
regular algebras $A$. Here, we establish correspondences between module 
categories over $A^H$, over $A\#H$, and over $\End_{A^H}(A)$. 
We also study homological properties of (endomorphism rings of) maximal 
Cohen-Macaulay modules over $A^H$.
\end{abstract}

\subjclass[2010]{16E65, 16T05, 16G50, 16D90}

\keywords{Artin-Schelter regular algebra,  Cohen-Macaulay property, 
Gabriel quiver, Hopf algebra action, McKay Correspondence}

\bibliographystyle{abbrv} \maketitle

\setcounter{section}{-1}
%%%%%%%%%%%%%%%%%%%%%%%%%%%%%%%%%%%%%%%%%%%%%
%%%%%%%%%%%           EDITING KEY  below        %%%%%%%%%%%%%%%%%
%%%%%%%%%%% %%%%%%%%%%%%%%%%%%%%%%%%%%%%%%%%%%%%%%%%%%%%%

\section{Introduction} \label{xxsec0}

Let $\kk$ be an algebraically closed field of characteristic zero. All
algebraic structures in this article are over $\kk$, and we take the
unadorned $\otimes$ to mean $\otimes_{\kk}$. Throughout  
the introduction, let $H$ be a semisimple Hopf algebra, and let $A$ 
be an {\it Artin-Schelter} ({\it AS}) regular algebra  $A$ 
(Definition~\ref{xxdef1.1}) that admits an action of $H$.  By definition
$A$ is a connected graded algebra, and we assume the 
following throughout this paper:

\begin{hypothesis}
Hopf algebra actions on graded algebras are grading preserving.
\end{hypothesis}

The goal of this article is to continue Part I \cite{PartI} of our 
work on the McKay Correspondence, extending  results known for 
$A=\kk [x_1, \dots, x_n]$ and $H = \kk G$ (for $G$ a finite subgroup 
of GL$_n(\kk)$)  to the context of semisimple Hopf actions on AS regular algebras. 
Our setting is motivated as follows. To extend the McKay Correspondence towards noncommutative invariant theory, we use AS regular algebras, a well-behaved homological analogue of commutative polynomial rings. (Note that a commutative AS regular algebra of dimension $n$ generated in degree one must be isomorphic to $\kk [x_1, \dots, x_n]$.) For a good notion of symmetry of a noncommutative (or {\it quantum}) algebra, one must extend beyond the setting of classical symmetry (e.g. group actions); actions of Hopf algebras is a suitable setting of such {\it quantum symmetry}. See \cite{Drinfeld:ICM}, for instance,  for further motivation. In particular, we employ actions of semisimple Hopf algebras $H$ as it is essential for the category of $H$-modules to be semisimple so that the diagrams in this work, including our version of a McKay quiver, can be constructed. Now it is necessary to work with a generalization of both $\kk G$ and $\kk [x_1, \dots, x_n]$ simultaneously as noncommutative algebras do not admit typically many group actions, and moreover, semisimple Hopf actions on commutative domains factor through group actions by \cite{EW}. On the other hand, there is an abundance of symmetries in our chosen setting-- we refer the reader to \cite{CKWZ} for numerous examples of semisimple Hopf actions on (noncommutative) AS regular algebras that do not factor through an action of a group.

\medbreak Using appropriate definitions and hypotheses, which will follow, 
we achieve the results below.  Here, let CM (respectively, MCM) stand for 
Cohen-Macaulay (respectively, maximal Cohen-Macaulay).

\begin{itemize}

\item[ ({\bf A})] 
We establish bijections of indecomposable objects of various left module 
categories of $H$, of the smash product algebra $A\#H$, of the 
fixed subring $A^H$, and of the endomorphism ring $\End_{A^H}(A)$.

\smallskip

\item[({\bf B})] 
We show that the McKay quiver and the Gabriel quiver associated to 
the $H$-action on $A$ are isomorphic as directed graphs.

\smallskip
\item[({\bf C})]  
We produce a version of a theorem of Herzog \cite{He} 
on indecomposable MCM modules over $A^H$, thus adding another 
correspondence to ({\bf A}).

\smallskip

\item[({\bf D}, {\bf E})]  
In \cite{Le1}, Leuschke proved that,
if $R$ is a commutative Cohen-Macaulay ring of dimension $d\geq 2$ 
and of finite Cohen-Macaulay type, then the endomorphism 
ring of the sum of MCM $R$-modules has global dimension $d$.
We present a noncommutative version of results in \cite{Le1} 
on the global dimension (and on the representation dimension) 
of endomorphism rings of MCM modules over $A^H$. 
\end{itemize}

To begin, let us recall a conjecture that we posed in \cite{PartI}: 
an analogue of Auslander's Theorem in the setting of  semisimple 
Hopf actions on AS regular algebras.  Auslander showed that when $G$ 
contains no reflections (e.g. when $G$ is a subgroup of SL$_n(\kk)$) 
and $A = \kk[x_1, \dots, x_n]$, then there is a natural graded algebra 
isomorphism $A\#G \cong \End_{ A^{G}} (A).$

\begin{conjecture} \cite[Conjecture 0.1]{PartI}
\label{xxcon0.2} 
Let $A$ be a noetherian AS regular  algebra that admits an inner 
faithful action of a semisimple Hopf algebra $H$, with trivial 
homological determinant. Then there is a natural graded algebra 
isomorphism $A\#H  \cong \End_{ A^{H}} (A).$
\end{conjecture}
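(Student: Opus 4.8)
The plan is to show that the canonical graded algebra homomorphism
$\Phi\colon A\#H\to\End_{A^H}(A)$ is bijective. Regard $A$ as a right $A^H$-module via multiplication, and let $\Phi(a\#h)$ be the operator $x\mapsto a(h\cdot x)$. The Hopf identity $h\cdot(xc)=\sum(h_1\cdot x)(h_2\cdot c)=(h\cdot x)c$ for $c\in A^H$ shows each $\Phi(a\#h)$ is right $A^H$-linear, a second use of the comultiplication shows $\Phi$ is multiplicative, and $\Phi$ is graded since $H$ sits in degree zero. Injectivity is the easy half: the two-sided ideal $\ker\Phi$ annihilates $A$, so it suffices that $A$ be a faithful $A\#H$-module, and for $A$ prime (which one may assume, e.g.\ $A$ a domain) this is exactly the assertion that the $H$-action is inner faithful, by a Dedekind-type independence argument (cf.\ \cite{PartI}). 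Everything then reduces to surjectivity of $\Phi$.

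For the homological bookkeeping set $R:=A^H$ and $d:=\GKdim A$. By the semisimple-Hopf analogues of Noether's finiteness theorem and of Watanabe's theorem (see \cite{CKWZ} and \cite{PartI}), $R$ is a noetherian connected graded algebra, $A$ is module-finite over $R$ on both sides with $\GKdim R=d$, and --- this is the first place the trivial homological determinant is used --- $R$ is AS-Gorenstein of dimension $d$. Being AS-regular, $A$ is Cohen--Macaulay, hence $\depth_R A=\depth_A A=d$ (local cohomology over $R$ and over $A$ agree since $A$ is finite over $R$), so $A$ is a maximal Cohen--Macaulay, in particular reflexive, $R$-module. It follows that both $A\#H$, which is isomorphic to $A^{\oplus\dim_\kk H}$ as a right $R$-module, and $\End_R(A)=\Hom_R(A,A)$ are reflexive $R$-modules. (This needs $d\ge2$; the cases $d\le1$ are degenerate, forcing $H=\kk$, and are checked directly.)

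The heart of the argument --- and the step I expect to be the real obstacle --- is to prove that $\Phi$ is an isomorphism \emph{in codimension one}: an isomorphism after quotienting out $R$-modules of GK-dimension at most $d-2$, equivalently after passing to the corresponding quotient category of graded $R$-modules. This is the noncommutative, graded incarnation of the classical fact that $\Spec A\to\Spec R$ is unramified away from a locus of codimension $\ge2$ when the acting group contains no pseudo-reflections: triviality of the homological determinant is precisely the analogue of the condition ``$G\subseteq\mathrm{SL}_n$'', and it must be leveraged to show that the ramification locus of the extension $R\subseteq A$ has codimension at least $2$. Equivalently, one must show that the pertinency $\GKdim(A\#H)-\GKdim\bigl((A\#H)/(A\#H)\,\mathtt{e}\,(A\#H)\bigr)$ is at least $2$, where $\mathtt{e}=1\#\Lambda$ for a normalized two-sided integral $\Lambda\in H$; on the complement of the ramification locus $A$ is an $H$-Galois-type extension of $R$, so $\Phi$ is visibly invertible there. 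Making this rigorous requires a workable theory of ``ramification in codimension one'' for the non-central noncommutative ring extension $R\subseteq A$, together with the numerical input extracted from the homological determinant; this is where the main new ideas beyond \cite{PartI} must enter.

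Granting the codimension-one isomorphism, the conclusion is formal. The cokernel $C:=\coker\Phi$ is a graded $R$-module of GK-dimension at most $d-2$, realized as the quotient of the reflexive $R$-module $\End_R(A)$ by its reflexive submodule $\Phi(A\#H)$; since a reflexive module coincides with the reflexive hull of its restriction to codimension one and $\Phi$ is already an isomorphism there, the inclusion $\Phi(A\#H)\subseteq\End_R(A)$ must be an equality (this also re-derives injectivity, as $\ker\Phi$ would be a submodule of the reflexive module $A\#H$ supported in codimension $\ge2$). Hence $C=0$, so $\Phi$ is the desired natural graded algebra isomorphism.
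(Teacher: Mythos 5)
You should first note that the statement you were asked to prove is stated in the paper as a \emph{conjecture}: the paper offers no proof of it, only a partial resolution in global dimension $2$ (Theorem~\ref{xxthm0.3}, from Part~I) and pointers to other special cases, and all subsequent results are explicitly conditioned on it. So there is no proof in the paper to compare against, and the real question is whether your argument closes the conjecture. It does not. The decisive step --- that $\Phi$ is an isomorphism in codimension one, which you reformulate as the pertinency bound $\GKdim(A\#H)-\GKdim\bigl((A\#H)/(A\#H)\,\mathtt{e}\,(A\#H)\bigr)\geq 2$ --- is precisely the open content of the conjecture in the formulation of \cite{BHZ1}. Your own text concedes this (``this is where the main new ideas beyond \cite{PartI} must enter''), so the argument reduces the conjecture to an equivalent unproved statement; the surrounding reflexivity bookkeeping (MCM implies reflexive for $d\geq 2$, reflexive modules determined in codimension one) is the standard and uncontroversial part. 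What you have written is a correct description of a known strategy, not a proof.

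Two secondary points would also need repair even if the pertinency step were supplied. First, injectivity of $\Phi$ is not ``the easy half'' in this generality: the Dedekind-type independence argument is specific to group algebras acting by automorphisms on domains, and faithfulness of $A$ as an $A\#H$-module does not follow formally from inner faithfulness for an arbitrary semisimple $H$; moreover, noetherian AS regular algebras are only conjecturally prime, so ``which one may assume'' is unjustified. (Your later remark that injectivity would follow from reflexivity plus the codimension-one isomorphism is the better route, but it again rests on the missing step.) Second, the claim that $R=A^H$ is AS Gorenstein under trivial homological determinant is a genuine theorem (a Hopf analogue of Watanabe) that should be cited precisely rather than waved at; it is used essentially to get $\End_R(A)$ reflexive.
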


The homological determinant is defined in Definition \ref{xxdef1.5}; 
in the classical setting  (with $H$ a group algebra and $A$ a 
commutative polynomial ring)  the homological determinant is the usual 
determinant of the linear map $g \in G$.
This conjecture was also posed in \cite[Conjecture 0.4]{BHZ1}.
In \cite{PartI}, we provided a partial resolution of this 
conjecture as follows. 

\begin{theorem}\cite[Theorem 0.2]{PartI}
\label{xxthm0.3}
If $A$ is a noetherian AS regular algebra of dimension~$2$ generated in 
degree one, then Conjecture~{\rm{\ref{xxcon0.2}}} is true.
\end{theorem}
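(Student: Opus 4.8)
Conjecture~\ref{xxcon0.2} predicts a natural graded algebra isomorphism $A\# H\cong\End_{A^H}(A)$, so the plan is to build the natural map $\Phi\colon A\# H\to\End_{A^H}(A)$ and prove it bijective by a local-cohomology argument over the fixed ring $A^H$: I would check that $\Phi$ is injective and becomes an isomorphism ``generically'' (i.e.\ in $\Qgr A^H$), and then upgrade this to a genuine isomorphism using that both sides are Cohen--Macaulay over $A^H$.

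First I would write down $\Phi$, sending $a\# h$ to the $A^H$-linear endomorphism $b\mapsto a(h\cdot b)$ of $A$. This is well defined because $A^H$ is $H$-stable and $h\cdot(ab)=(h\cdot a)b$ for $b\in A^H$; it is degree-preserving since the $H$-action preserves the grading and $H$ sits in degree $0$; and multiplicativity is routine. Injectivity of $\Phi$ is the statement that $A$ is a faithful left $A\# H$-module; since $H$ is semisimple and $A$ is connected graded, graded inner faithfulness of the action makes $A$ a faithful $H$-module, from which one excludes nonzero homogeneous annihilators of $A$ in $A\# H$ (if convenient, after passing to the graded quotient division ring of $A$ with its induced $H$-action).

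Next I would assemble the homological bookkeeping over $A^H$. As $A$ is noetherian and $H$ is semisimple, $A^H$ is a noetherian connected graded algebra with $\GKdim A^H=\GKdim A=2$ and $A$ is a finitely generated $A^H$-module; being AS regular of dimension $2$, $A$ is a domain, and since depth is unchanged under finite extensions, $A$ is maximal Cohen--Macaulay (depth $2$) over $A^H$. By the Hopf-algebra analogue of Watanabe's theorem, triviality of the homological determinant forces $A^H$ to be AS Gorenstein of dimension $2$. Hence $A\# H\cong A^{\oplus\dim_{\kk}H}$ is MCM (depth $2$) over $A^H$, while $\End_{A^H}(A)=\Hom_{A^H}(A,A)$, being a Hom into a torsion-free module, has depth $\ge 1$ over $A^H$. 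Granting the generic claim below, $\coker\Phi$ is finite-dimensional, and the exact sequence $0\to A\# H\xrightarrow{\Phi}\End_{A^H}(A)\to\coker\Phi\to 0$ of graded left $A^H$-modules gives, in local cohomology at $\fm=\fm_{A^H}$, the exact piece $0=H^0_{\fm}(\End_{A^H}(A))\to\coker\Phi\to H^1_{\fm}(A\# H)=0$, so $\coker\Phi=0$ and, with injectivity, $\Phi$ is an isomorphism.

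The hard part is the generic claim that $\Phi$ becomes an isomorphism in $\Qgr A^H$, and this is where dimension $2$ is essential. Trivial homological determinant means the $H$-action has no ``reflections'', so the ramification of $A^H\subseteq A$ is pure of codimension one in the Hopf-theoretic sense; in dimension $2$ this locus is therefore contained in the irrelevant ideal, so $A^H\subseteq A$ is unramified, in fact \'etale, on the punctured spectrum, whence $A^H$ inherits AS regularity there from $A$ and $A$, being MCM, is finitely generated projective over the now-regular $A^H$, with the action ``free''. A faithfully flat (by semisimplicity), projective, unramified Hopf extension is $H^{*}$-Galois, and for an $H^{*}$-Galois extension the natural map $\Phi$ is an isomorphism; hence $\Phi$ is an isomorphism in $\Qgr A^H$. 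I expect the two ingredients here---making precise, in the AS-regular/Hopf dictionary, that trivial homological determinant implies no reflections and hence unramifiedness in codimension one, and that ``unramified $+$ projective $+$ $H$ semisimple'' yields the $H^{*}$-Galois condition that makes $\Phi$ an isomorphism---to be the main obstacle; classically, with $A=\kk[x,y]$ and $H=\kk G$ for $G\subseteq\mathrm{SL}_2(\kk)$, these reduce to the absence of pseudo-reflections and to freeness of the $G$-action on $\kk^{2}\setminus\{0\}$. The hypothesis that $A$ is generated in degree one enters through the classification of AS regular algebras of dimension two (pinning $A$ down to a quantum plane or the Jordan plane) and the resulting explicit description of the homological determinant.
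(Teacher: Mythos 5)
This paper does not prove the statement at all: Theorem~\ref{xxthm0.3} is imported verbatim from \cite[Theorem 0.2]{PartI}, so there is no in-text argument to compare against, and your proposal has to stand on its own. Its outer shell is fine and is indeed the now-standard reduction: the Auslander map $\Phi$ is a graded algebra map; $A\#H\cong A^{\oplus \dim_{\kk}H}$ has depth $2$ over $A^H$; $\End_{A^H}(A)=\Hom_{A^H}(A,A)$ has depth $\geq 2$ (this is exactly Lemma~\ref{xxlem3.16} of the present paper); and once $\coker\Phi$ is known to be $\fm$-torsion, the exact piece $H^0_{\fm}(\End_{A^H}(A))\to H^0_{\fm}(\coker\Phi)\to H^1_{\fm}(A\#H)$ forces $\coker\Phi=0$. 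That endgame is correct.

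The genuine gap is the step you yourself flag as ``the main obstacle,'' and it is not a technicality --- it is the entire content of the theorem. The chain ``trivial $\hdet$ $\Rightarrow$ no reflections $\Rightarrow$ ramification pure of codimension one $\Rightarrow$ \'etale on the punctured spectrum $\Rightarrow$ $H^{*}$-Galois $\Rightarrow$ $\Phi$ an isomorphism in $\Qgr A^H$'' is a sequence of classical analogies for which no noncommutative/Hopf-theoretic definitions or theorems are supplied: there is no ramification locus, no purity theorem, and no punctured spectrum available off the shelf for a noncommutative $A^H$, and even formulating ``no reflections'' for a semisimple Hopf action is a nontrivial problem. Moreover, the one precise tool you do invoke --- passing to the graded quotient division ring $Q$ of $A$ --- only yields that $\coker\Phi$ is torsion in the Ore sense (GK-dimension $\leq 1$), which is strictly weaker than the $\fm$-torsion (finite-dimensionality) needed for your local-cohomology squeeze; closing that gap is precisely where the hypotheses ``dimension $2$, generated in degree one, trivial homological determinant'' must be used in an essential, non-formal way (in \cite{PartI} this is done by entirely different means, resting on the explicit classification of such pairs $(A,H)$ and concrete computations rather than a ramification argument). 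A secondary gap: injectivity of $\Phi$ does not follow from $A$ being a faithful $H$-module --- faithfulness over $H$ and over $A$ separately does not give faithfulness over $A\#H$ --- and the division-ring route you hint at again requires the Hopf--Galois property of $Q^H\subseteq Q$, which itself needs proof. So the proposal is an honest and well-organized reduction, but it leaves the theorem's actual substance unproved.
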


There are other cases where Conjecture \ref{xxcon0.2} has been shown to hold, see 
\cite{BHZ2, BHZ1, GKMW}.
Now the precise statement of result ({\bf A}), which depends on 
Conjecture~\ref{xxcon0.2}, is given below.  We call an $A$-module $M$ 
an {\em initial} module if it is a graded module, generated in 
degree 0, with $M_{<0} = 0$.

\begin{introthm}[Proposition~\ref{xxpro2.3}, Corollary~\ref{xxcor2.6}, and Lemma~\ref{xxlem3.17}] 
\label{xxthmA}
{\it Let $A$ be a noetherian AS regular algebra that admits an inner 
faithful action of a semisimple Hopf algebra~$H$,  with trivial 
homological determinant. If Conjecture~{\rm{\ref{xxcon0.2}}} holds, 
then there are bijective correspondences between the isomorphism 
classes of:}
\begin{enumerate}
\item[(a)] 
{\it indecomposable direct summands of $A$ as left 
$A^{H}$-modules;}
\item[(b)] 
{\it indecomposable finitely generated, projective, initial left
$\End_{ A^{H}} (A)$~-modules;}
\item[(c)] 
{\it indecomposable finitely generated, projective, initial left 
$A\# H$-modules; and}
\item[(d)] 
{\it simple left $H$-modules.}
\end{enumerate}
\end{introthm}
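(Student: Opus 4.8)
The plan is to build all the bijections with the simple $H$-modules (d) as the hub, connecting (d) to (c), then (c) to (b) via Conjecture~\ref{xxcon0.2}, and finally (b) to (a). Throughout I would use that $H$ is semisimple (so $H$-mod is a semisimple category), that $A$ is connected graded, and that the $H$-action is grading preserving.

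For the bijection (d)$\leftrightarrow$(c): the smash product $A\#H$ is a connected graded algebra only in the sense that its degree-zero part is $H$; more precisely, $(A\#H)_0 = H$ since $A_0 = \kk$. An initial, finitely generated, projective left $A\#H$-module $P$ is generated in degree $0$, so $P$ is determined by the $H$-module $P_0$, and projectivity over $A\#H$ forces $P \cong (A\#H)\otimes_H P_0$ as graded left $A\#H$-modules. Conversely, for any $H$-module $V$, the module $(A\#H)\otimes_H V$ is projective and initial. Since $H$ is semisimple, $V$ is projective over $H$ automatically, and decomposing $V$ into simples corresponds to decomposing the induced module; hence indecomposable such $P$ correspond exactly to the simple $H$-modules $V$. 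This is essentially the statement that $\add$ of the free module, restricted to the initial part, is equivalent to $\add$ of $H$ inside $H$-mod $= \proj H$. The same argument works verbatim for $\End_{A^H}(A)$ once we know (via Conjecture~\ref{xxcon0.2}) that $\End_{A^H}(A)\cong A\#H$ as graded algebras, giving (b)$\leftrightarrow$(c) immediately and carrying the degree-zero part $H$ along; so (b)$\leftrightarrow$(d) follows.

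For the bijection (a)$\leftrightarrow$(b): here $A$ is a graded left $A^H$-module, and $\End_{A^H}(A)$ acts on $A$ on the left (after fixing the convention for which side), making $A$ a left $\End_{A^H}(A)$-module, in fact a graded one generated in degree $0$ — so $A$ is initial. The key is the standard idempotent/projectivization argument: for $\Lambda = \End_{A^H}(A)$ and $M = A$, the functor $\Hom_{A^H}(M, -)$ sends the additive category $\add_{A^H}(M)$ of summands of finite direct sums of $M$ equivalently onto $\proj \Lambda$, the category of finitely generated projective left $\Lambda$-modules, with $\Hom_{A^H}(M,M) = \Lambda$ corresponding to $\Lambda$ itself. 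Restricting to the graded/initial versions, indecomposable graded direct summands of $A$ (as a left $A^H$-module) correspond bijectively to indecomposable initial f.g.\ projective left $\Lambda$-modules. One must check that this equivalence respects the grading and the ``initial'' condition, which it does because $A$ is generated in degree $0$ over $A^H$ (as $A_0 = \kk \subseteq A^H$) and the Hom-functor is degree-compatible; a summand of $A^{\oplus n}$ that is indecomposable is generated in degree $0$ with no negative part.

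Assembling these, we obtain (a)$\leftrightarrow$(b)$\leftrightarrow$(c)$\leftrightarrow$(d). The main obstacle is bookkeeping of graded conventions and the ``initial'' hypothesis at each stage: one needs the degree-zero components to match up ($A_0 = \kk$, $(A\#H)_0 = H$, $(\End_{A^H}(A))_0 \cong H$ under the conjectural isomorphism), and one needs to verify that ``initial + projective + f.g.'' is exactly the image under the relevant Hom/induction functor of the additive category generated by the distinguished module. A subtler point is ensuring that the graded isomorphism in Conjecture~\ref{xxcon0.2} identifies the copy of $H$ inside $A\#H$ with the degree-zero part of $\End_{A^H}(A)$ compatibly — but since the isomorphism is required to be graded and both degree-zero parts are semisimple of the same dimension, this follows. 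Once these identifications are in place, the Krull--Schmidt property (valid here because all the algebras involved are noetherian and the relevant module categories are graded with finite-dimensional graded pieces, hence semiperfect in the relevant sense) makes the correspondence of \emph{indecomposables} automatic.
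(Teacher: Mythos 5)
Your steps (d)$\leftrightarrow$(c) and (c)$\leftrightarrow$(b) match the paper: Proposition~\ref{xxpro2.3} gives $A\# H\mhyphen \Grprm_0\simeq H\mhyphen\Mod$ by exactly the induction/projective-cover argument you describe (the module $A\otimes V$ with the structure \eqref{E2.2.1} is your $(A\# H)\otimes_H P_0$), and the graded isomorphism from Conjecture~\ref{xxcon0.2} transports this to $\End_{A^H}(A)$. Your (b)$\leftrightarrow$(a) step is also the right idea in outline --- the projectivization equivalence $\uHom_{\grm\mhyphen B}(M,-)\colon \add_{\grm\mhyphen B}M\simeq \grprm\mhyphen E$ of Proposition~\ref{xxpro2.5} with $B=A^H$, $M=A$.

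The genuine gap is in how you dismiss the sidedness as ``bookkeeping of conventions.'' Throughout the paper $\End_{A^H}(A)$ means the endomorphism ring of $A$ as a \emph{right} $A^H$-module, and that is the ring Conjecture~\ref{xxcon0.2} identifies with $A\# H$. The projectivization equivalence therefore puts the indecomposable \emph{right} $A^H$-module summands of $A$ in bijection with indecomposable projectives over $E$ (and Lemma~\ref{xxlem1.6} flips right $E$-projectives to left ones). But item (a) asserts the correspondence with the indecomposable \emph{left} $A^H$-module summands of $A$. It is not automatic that $A$ decomposes the same way as a left and as a right $A^H$-module; the paper proves this via Lemma~\ref{xxlem3.17}, a duality $\add_{\grm\mhyphen A^H}A\cong \add_{A^H\mhyphen\grm}A$ realized by the local-cohomology functor $R^d\Gamma_{\fm_{A^H}}(-)^{\ast}(\ell)$, which crucially uses that $A$ is AS Gorenstein so that $R^d\Gamma_{\fm_{A^H}}({}_{A^H}A)^{\ast}(\ell)\cong A_{A^H}$. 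This lemma is listed as one of the three ingredients of Theorem~\ref{xxthmA} precisely because it supplies the step your proposal skips; without it (or some substitute argument comparing $\End$ of the left module with $\End$ of the right module), your chain of bijections terminates at the right-module summands of $A$, not at (a).
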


In the classical setting, the vertices of the quivers that arise naturally in 
the McKay Correspondence are the objects in 
the theorem above. In this direction, here, we provide an isomorphism 
of  the {\it Gabriel quiver} (Definition~\ref{xxdef2.8}) and the 
{\it McKay quiver} (Definition~\ref{xxdef2.9}) of the 
$H$-action on $A$. 

\begin{introthm}[Theorem~\ref{xxthm2.10}] \label{xxthmB}
{\it The Gabriel quiver ${\mathcal G}(H,A)$ is 
isomorphic to the McKay quiver $(W){\mathcal M}$.}
\end{introthm}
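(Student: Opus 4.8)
The plan is to compare the two quivers vertex-by-vertex and arrow-by-arrow, using Theorem~\ref{xxthmA} to identify the common vertex set. Both ${\mathcal G}(H,A)$ and $(W){\mathcal M}$ have vertices indexed by the isomorphism classes of simple left $H$-modules $\{V_0, V_1, \dots, V_r\}$ (with $V_0$ the trivial module), so the content of the theorem is entirely about the arrows. First I would recall the two definitions precisely: the McKay quiver $(W){\mathcal M}$ has, as the number of arrows from $V_i$ to $V_j$, the multiplicity of $V_j$ in $W \otimes V_i$, where $W$ is the $H$-module on the space of degree-one generators $A_1$ of $A$ (this is the ``defining representation''); the Gabriel quiver ${\mathcal G}(H,A)$ has, as the number of arrows from $P_i$ to $P_j$, the dimension of $\Ext^1$ between the corresponding simple modules over the relevant algebra, i.e.\ $\dim_{\kk}\Ext^1(S_i,S_j)$ computed in the module category attached to the $H$-action on $A$ — concretely the Gabriel quiver of $A\#H$ restricted to the initial/projective summands, or equivalently of $\End_{A^H}(A)$ via Conjecture~\ref{xxcon0.2}.

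The key computational step is to identify $\Ext^1$ in the Gabriel quiver with a tensor multiplicity. Since $A\#H$ is a graded algebra with degree-zero part $H$ (semisimple), its graded simple modules are $V_0,\dots,V_r$ sitting in degree $0$, and a minimal projective resolution of $V_i$ as a graded $A\#H$-module is obtained by inducing up a minimal graded free resolution of $\kk$ over $A$ and tensoring with $V_i$ over $\kk$, keeping track of the $H$-action. The first syzygy term of the minimal resolution of $\kk$ over the AS regular algebra $A$ is $A \otimes A_1$ (up to a degree shift), because $A$ is generated in degree one with the Koszul-type start $A \otimes A_1 \to A \to \kk \to 0$; passing through the smash product and extracting the degree-one part of the bimodule of relations identifies $\Ext^1_{A\#H}(V_i,V_j)$ with $\Hom_H(V_j,\, W \otimes V_i)$, whose dimension is exactly the McKay multiplicity. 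I would then invoke Schur's lemma / semisimplicity of $H\textup{-mod}$ to convert this Hom-space dimension into the multiplicity of $V_j$ in $W\otimes V_i$, giving equality of arrow counts.

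I expect the main obstacle to be bookkeeping the passage between the three incarnations of the Gabriel quiver — over $A\#H$, over $\End_{A^H}(A)$, and the ``initial projective'' summands of $A$ as an $A^H$-module — and making sure that the $\Ext^1$ one computes is really the one defining ${\mathcal G}(H,A)$ in Definition~\ref{xxdef2.8}. This is where Conjecture~\ref{xxcon0.2} and Theorem~\ref{xxthmA} do the heavy lifting: the isomorphism $A\#H \cong \End_{A^H}(A)$ (assumed, or available in the cases where it is known) transports Gabriel quivers, and Theorem~\ref{xxthmA} matches the vertices so that the transported quiver really is the one on the simple $H$-modules. One subtlety to handle carefully is the direction of the arrows and the possible appearance of the dual $W^*$ versus $W$ (depending on left/right conventions and on whether one resolves $\kk$ or co-resolves it); with trivial homological determinant the top exterior power of $W$ is trivial, which keeps the ``ends'' of the Koszul complex symmetric, but the $\Ext^1$ term only sees $W$ itself, so this should not cause a genuine discrepancy — I would simply verify the convention once and state it. Finally, I would assemble these identifications into the claimed isomorphism of directed graphs, noting that no multiplicities are lost because both sides are computed in semisimple-at-degree-zero settings where every relevant Hom/Ext space decomposes completely.
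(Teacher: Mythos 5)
Your proposal is correct and follows essentially the same route as the paper: the first syzygy of the minimal free resolution of $\kk$ over $A$ is $A\otimes W[-1]$, this resolution remains minimal over $A\#H$ (Lemma~\ref{xxlem2.7}(3)), and tensoring with $V^{(j)}$ identifies $Q_1^{(j)}$ with $A\otimes(W\otimes V^{(j)})[-1]$, whence semisimplicity of $H\mhyphen\Mod$ gives $m_{ij}=n_{ij}$. One correction: Definition~\ref{xxdef2.8} defines the arrow counts directly as multiplicities of $P^{(i)}[-1]$ in $Q_1^{(j)}$ over $A\#H$, so neither the $\Ext^1$ reformulation, nor Conjecture~\ref{xxcon0.2}, nor the trivial-homological-determinant hypothesis is needed --- the theorem is unconditional, and your appeal to Theorem~\ref{xxthmA} for ``heavy lifting'' is superfluous.
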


\medbreak The last half of this work (Sections~\ref{xxsec3}-\ref{xxsec5}) 
pertains to CM and MCM modules (Definition \ref{xxdef3.5}(3,4)) over 
the fixed subring $A^H$. Result ({\bf C}) is provided as follows.

\begin{introthm}[Corollary~\ref{xxcor4.5}] 
\label{xxthmC}
{\it Let $A$ be a noetherian AS regular algebra of dimension~$2$ that 
admits an action of  a semisimple Hopf algebra $H$. Then there 
is a bijective correspondence between the modules  of 
Theorem~{\rm{\ref{xxthmA}}}{\rm (a)} and }
\begin{itemize}
\item[(e)]{\it  indecomposable {\rm MCM} left 
$A^H$-modules, up to a degree shift.}
\end{itemize}
\end{introthm}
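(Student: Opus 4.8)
This is a dimension-two, Hopf-theoretic incarnation of Herzog's theorem, so the plan is to prove the two containments $\add_{A^H}(A) \subseteq \MCM(A^H)$ and, up to degree shift, $\MCM(A^H) \subseteq \add_{A^H}(A)$, where $\add_{A^H}(A)$ denotes the class of direct summands of finite direct sums of degree shifts of $A$ as a left $A^H$-module; the first containment together with a Krull--Schmidt count then yields the asserted bijection with Theorem~\ref{xxthmA}(a). The first containment is the easy one. Since local cohomology sees only the radical of the defining ideal, $\depth_{A^H}(A) = \depth_A(A) = 2$, and as $\GKdim A^H = 2$ this says $A$ is MCM over $A^H$; because the normalized integral of the semisimple $H$ provides a Reynolds operator splitting $A = A^H \oplus (\text{rest})$ of $A^H$-bimodules, the same computation shows $A^H$ is Cohen--Macaulay of dimension two. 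Every direct summand and degree shift of $A$ is therefore MCM over $A^H$.

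For the reverse containment I would first reduce to the case of trivial homological determinant by passing to the fixed subring under the ``reflection part'' of $H$, which in the dimension-two setting remains AS regular of dimension two and carries an action of the residual Hopf algebra with trivial homological determinant and fixed subring again equal to $A^H$; since that subring is itself a Reynolds summand of $A$ over $A^H$, proving the containment there proves it for $A$. So assume now the hypotheses of Theorem~\ref{xxthm0.3}, which identifies $E := \End_{A^H}(A) \cong A\#H$ as graded algebras. Note $A^H$ is then a central subring of $A\#H$ over which $A\#H$ is module-finite, $A\#H$ is MCM over $A^H$ (it is $A^{\oplus \dim H}$ as an $A^H$-module and $A$ is MCM), and $\gldim(A\#H) = \gldim A = 2$. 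The Reynolds splitting gives $A^H \in \add_{A^H}(A)$, so $A$ is a generator and $\Hom_{A^H}(A,-)$ restricts to an equivalence $\add_{A^H}(A) \to \proj(A\#H)$, with quasi-inverse $- \otimes_{A\#H} A$.

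Now let $M$ be an indecomposable MCM left $A^H$-module and set $N := \Hom_{A^H}(A,M)$. A depth-lemma argument (the $A^H$-module $\Hom_{A^H}$ of two MCM modules over the two-dimensional Cohen--Macaulay ring $A^H$ again has depth at least two) shows $N$ is MCM over $A^H$, hence a MCM $A\#H$-module. Because $A\#H$ is module-finite and MCM over its central subring $A^H$ with $\gldim(A\#H) = 2 = \dim A^H$, it is homologically homogeneous --- a noncommutative crepant resolution of $A^H$ --- so the noncommutative Auslander--Buchsbaum equality $\pdim_{A\#H} N + \depth_{A^H} N = 2$ holds and forces $N$ to be projective. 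Thus $N \cong \Hom_{A^H}(A, A')$ for some $A' \in \add_{A^H}(A)$; applying $- \otimes_{A\#H} A$ and using that an MCM module over the two-dimensional normal domain $A^H$ is reflexive, hence lies in the essential image of $\Hom_{A^H}(A,-)$, we conclude $M \cong A'$. As $M$ is indecomposable, graded Krull--Schmidt over the connected graded noetherian ring $A^H$ shows $M$ is a degree shift of one of the finitely many indecomposable summands of $A$. Combining the two containments, the indecomposable MCM left $A^H$-modules modulo degree shift are exactly the indecomposable summands of $A$ as a left $A^H$-module; the correspondence is a genuine bijection because $\Hom_{A^H}(A,-)$ carries distinct indecomposable summands of $A$, and all their degree shifts, to pairwise non-isomorphic indecomposable graded projective $A\#H$-modules.

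The main obstacle is the homological package on $E = \End_{A^H}(A)$: that it has finite global dimension and is MCM over the central two-dimensional ring $A^H$, so that the noncommutative Auslander--Buchsbaum equality applies and $\Hom_{A^H}(A,M)$ is forced to be projective. It is precisely the reduction to trivial homological determinant and the resulting identification $E \cong A\#H$ of Theorem~\ref{xxthm0.3} that makes this available; the remaining care is routine but not entirely trivial --- the graded bookkeeping of degree shifts, and pinning down the noncommutative reflexivity needed to transport $M$ back across the equivalence.
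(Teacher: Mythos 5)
Your first containment (summands of $A$ are MCM over $A^H$) matches the paper's: it is Lemma~\ref{xxlem4.3}(5) plus Lemma~\ref{xxlem3.11}(3), using the Reynolds operator coming from the integral of $H$ (Lemma~\ref{xxlem4.2}). The reverse containment, however, is where your argument has genuine gaps, and it also diverges completely from what the paper does.

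First, the reduction to trivial homological determinant via ``the reflection part of $H$'' is not available. No such notion is defined for Hopf algebras here, and the chain of claims you need --- that the intermediate fixed ring is again AS regular of dimension two, that a residual Hopf algebra acts on it with trivial homological determinant, and that its fixed ring recovers $A^H$ --- is essentially the (open) Shephard--Todd--Chevalley problem for Hopf actions, not a routine reduction. Moreover, invoking Theorem~\ref{xxthm0.3} smuggles in hypotheses absent from the statement: that result requires $A$ generated in degree one and the action inner faithful. Second, your homological package on $A\#H$ rests on the assertion that $A^H$ is a \emph{central} subring of $A\#H$; this is false unless $A^H$ is central in $A$ (elements of $A^H$ commute with $H$ inside $A\#H$, but not with $A$ when $A$ is noncommutative), so the ``module-finite over a central two-dimensional ring / homologically homogeneous / NCCR'' framework does not apply as stated. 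Relatedly, ``MCM over the two-dimensional normal domain $A^H$ is reflexive'' imports commutative hypotheses ($A^H$ need be neither commutative, nor a domain, nor normal).

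The paper's proof (Theorem~\ref{xxthm4.4}) avoids all of this and needs none of: trivial homological determinant, inner faithfulness, degree-one generation, or the Auslander isomorphism $A\#H\cong\End_{A^H}(A)$. It works for an arbitrary splitting subring $B$ of a generalized AS regular algebra $A$ of global dimension $2$: one sets $M^\vee:=\Hom_B(M,\Omega)$ using local duality for the balanced dualizing complex $\Omega[2]$ of $B$, applies $R^2\Gamma_{\fm_B}(-)^*$ to the Reynolds splitting to get a split exact sequence $0\to\Omega\to P\to C^\vee\to 0$ of $B$-bimodules (with $P=R^2\Gamma_{\fm_A}(A)^*$ invertible over $A$), and then shows via the depth estimate of Lemma~\ref{xxlem3.16} that $\Hom_{B^{\op}}(M^\vee,P)$ is MCM, hence projective, over $A$ (Lemma~\ref{xxlem3.13}); since $M\cong(M^\vee)^\vee$ is a $B$-direct summand of this module, $M$ is a summand of $_BA$ up to degree shift. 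If you want to salvage your route, you would have to restrict to the trivial-hdet, inner-faithful, degree-one-generated case and replace ``central subring'' by a correct statement about $A\#H$ being generalized AS regular (Proposition~\ref{xxpro3.12}(3)); but as written the argument does not establish the theorem in the stated generality.
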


\medbreak Finally, we draw our attention to endomorphism rings 
of MCM modules over algebras of {\it finite CM type} 
(Definition~\ref{xxdef5.2}), particularly to achieve result ({\bf D}), a noncommutative analogue of \cite[Theorem~6]{Le1}.

\begin{introthm}[Theorem \ref{xxthm5.4}]
\label{xxthmD} 
{\it Let $B$ be a noetherian connected graded algebra with a 
balanced dualizing complex. Suppose that $B$ is {\rm CM} and 
of finite {\rm CM}  type. Let $M$ be a finite direct sum of {\rm MCM} 
right $B$-modules that contain at least one copy of each {\rm MCM} 
module, up to a degree shift. Let $E:=\uEnd_{B}(M)$, the ring of graded endomorphisms of $M$.}
\begin{itemize}
\item[(1)] 
{\it If $E$ is right noetherian, then $E$ has right global
dimension at most $\max\{2,d\}$ where $d=\cd(B)$ 
\textnormal{(}Definition~{\rm{\ref{xxdef3.5}}}\textnormal{)}.}
\item[(2)] 
{\it If $d\geq 2$, then the right global dimension of $E$ is equal to $d$.}
\end{itemize}
\end{introthm}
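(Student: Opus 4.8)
The plan is to mimic the Auslander–Leuschke strategy in the graded noncommutative setting, using the dualizing complex to control depth. First I would set up notation: write $M = \bigoplus_i M_i$ where the $M_i$ run (up to degree shift) over a complete list of indecomposable MCM right $B$-modules, and set $E = \uEnd_B(M)$. The functor $\uHom_B(M,-)$ sends an MCM module $N$ to a finitely generated projective right $E$-module, since $N$ is (up to shift) a summand of a direct sum of copies of $M$ and $\uHom_B(M,M)=E$; this is the key input coming from finite CM type. Conversely every finitely generated projective right $E$-module of the appropriate form arises this way. I would record that $\uHom_B(M,-)$ is exact on short exact sequences of MCM modules whose quotient is again MCM (the relevant $\Ext^1$ over $B$ vanishing when all three terms are MCM, by the depth/dualizing-complex characterization of MCM).

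For part (1): given a finitely generated right $E$-module $X$, take a free resolution and truncate to get a syzygy; the heart of the argument is to show that after at most $\max\{2,d\}$ steps the syzygy becomes projective. The mechanism is the standard one: a finitely generated right $E$-module $X$ has a presentation $P_1 \to P_0 \to X \to 0$ with $P_0, P_1$ projective, hence of the form $\uHom_B(M,N_1)\to \uHom_B(M,N_0)$ for MCM modules $N_i$; this map is induced by a $B$-module map $N_1\to N_0$. Let $C$ be its cokernel in $\mmod$-$B$ (not necessarily MCM) and let $K$ be the kernel of $N_0\to C$. Taking an MCM approximation / high syzygy of $C$ over $B$ and using that $B$ has finite injective dimension (balanced dualizing complex, $\cd(B)=d$), one sees that the $d$-th syzygy of $C$ is MCM, and that $\uHom_B(M,-)$ converts the corresponding exact sequences into an exact complex of projectives computing a projective resolution of $X$ of length $\le \max\{2,d\}$; the "$2$" appears because even if $d\le 1$ one still needs the two steps $P_1\to P_0$ before the syzygies over $B$ take over. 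Right noetherianity of $E$ is used to guarantee the $P_i$ and all syzygies are finitely generated so the length bound is uniform, giving $\gldim E \le \max\{2,d\}$.

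For part (2), assuming $d\ge 2$: I would produce a module of projective dimension exactly $d$. The natural candidate is $X = \coker\big(\uHom_B(M, J) \to E\big)$ where $J = \fm M$, i.e. the "simple-like" $E$-module at the vertex corresponding to the trivial/regular summand $B$ of $M$; equivalently $X = \uHom_B(M,-)$ applied to the sequence $0\to J \to B \to \kk \to 0$, whose cokernel as an $E$-module is (a shift of) $S$, the simple top at that vertex. Then a minimal projective $E$-resolution of $S$ is obtained by applying $\uHom_B(M,-)$ to a minimal MCM/free resolution over $B$ of $\kk$ — more precisely to the stalks of the (shifted, dualized) Koszul-type complex — and its length is governed by $\cd(B) = d$ via local duality: $\Ext^d_B(\kk, \omega_B)\ne 0$ forces a nonvanishing $\Ext^d_E(S,-)$. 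So $\gldim E \ge d$, and combined with (1) (where $\max\{2,d\}=d$ since $d\ge 2$) we get equality.

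The main obstacle I anticipate is part (1), specifically verifying that applying $\uHom_B(M,-)$ to the non-exact complex $N_1\to N_0\to C$ and its syzygies really yields an \emph{exact} complex of projective $E$-modules computing the resolution of $X$ — i.e. that $\uHom_B(M,-)$ kills exactly the right higher $\Ext$'s. This requires the precise statement that $\Ext^{\ge 1}_B(N, N') = 0$ for $N$ MCM and $N'$ MCM (which in turn rests on the depth inequalities supplied by the balanced dualizing complex, as in Definition \ref{xxdef3.5}), together with a careful dévissage showing the $d$-th $B$-syzygy of an arbitrary finitely generated module is MCM — the noncommutative, graded analogue of the Auslander–Buchsbaum/depth bookkeeping. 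Everything else (the finite-CM-type dictionary between projective $E$-modules and MCM $B$-modules, and the local-duality computation in part (2)) is a routine transcription of the commutative argument.
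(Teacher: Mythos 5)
Your overall architecture matches the paper's (which follows Leuschke): resolve $X$ over $E$, transport the resolution to a complex of MCM $B$-modules, and use a depth argument to show that the $d$-th syzygy is again MCM, hence lands back in $\add_{\grm\mhyphen B^{\op}}M$ and corresponds to a projective $E$-module. But there is a genuine gap at exactly the point you flag as the main obstacle, and the tool you propose to close it is false: it is \emph{not} true that $\uExt^{\geq 1}_B(N,N')=0$ for MCM modules $N,N'$. If that held there would be no non-split extensions between MCM modules, which contradicts, for instance, the existence of almost split sequences (and of nontrivial arrows in the quivers of Section 2) in the finite CM type setting. This Ext-vanishing is neither available nor what drives the argument. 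Relatedly, passing to the cokernel $C$ of $g_1\colon N_1\to N_0$ in $\grm\mhyphen B$ and taking $B$-syzygies of $C$ does not track the $E$-projective resolution of $X$: since $\uHom_{\grm\mhyphen B^{\op}}(M,-)$ is only left exact, $X=\coker\,\uHom_{\grm\mhyphen B^{\op}}(M,g_1)$ need not equal $\uHom_{\grm\mhyphen B^{\op}}(M,C)$, so the two towers of syzygies diverge from the first step.

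The proof in the paper (Theorem~\ref{xxthm5.4}) closes this gap without any Ext-vanishing. One continues the projective resolution of $X$ over $E$ for $d-1$ steps and uses the equivalence $\add_{\grm\mhyphen B^{\op}}M\simeq \grprm\mhyphen E$ of Proposition~\ref{xxpro2.5}(3) to write the entire complex as $P_\bullet=\uHom_{\grm\mhyphen B^{\op}}(M,N_\bullet)$ for a complex $N_\bullet$ of MCM right $B$-modules. The key observation is that, because $B$ is CM, every indecomposable direct summand of $B$ is MCM and hence occurs in $M$; thus $M=M_0\oplus M_1$ with $M_0$ a graded progenerator of $B$. Then $\uHom_{\grm\mhyphen B^{\op}}(M_0,N_\bullet)$ is a direct summand of the exact complex $P_\bullet$, hence exact, and Morita theory applied to $\uHom_{\grm\mhyphen B^{\op}}(M_0,-)$ forces $N_\bullet$ itself to be exact. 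Lemma~\ref{xxlem3.15} then shows $N_d:=\ker g_{d-1}$ is MCM, so $\uHom_{\grm\mhyphen B^{\op}}(M,N_d)$ is projective over $E$, and left exactness of $\uHom_{\grm\mhyphen B^{\op}}(M,-)$ alone shows that $0\to\uHom_{\grm\mhyphen B^{\op}}(M,N_d)\to P_{d-1}\to\cdots\to P_0\to X\to 0$ is exact, giving the bound $\max\{2,d\}$. Your lower-bound argument for (2) is in the right spirit (a depth-zero module forces projective dimension $d$), though the paper runs it by contradiction with the same depth lemma applied to $N_1\to N_0\to Y\to 0$ for a finite-dimensional $Y$, rather than via a Koszul/local-duality computation; you should rework part (1) along the lines above before the rest can stand.
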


Note that $E$ in Theorem \ref{xxthmD} (and Theorem \ref{xxthmE} below) 
may not be ${\mathbb N}$-graded, but is always 
${\mathbb Z}$-graded, bounded below, and locally finite. We introduce the notion of 
a {\em commonly graded} module (Definition \ref{xxdef1.2}(4)) to handle this situation. 
When $\cd(B)=2$ and $B$ is module-finite over its affine center, we have a 
stronger result, a noncommutative analogue of \cite[ Proposition~8]{Le1}.

\begin{introthm}[Theorem \ref{xxthm5.7}]
\label{xxthmE} 
{\it Let $B$ be a noetherian connected graded algebra that is {\rm CM} with $\cd(B)=2$, 
of finite CM type, and is module-finite over a central noetherian graded subalgebra $C$. Let $M$ be a finite direct sum of {\rm MCM} right $B$-modules that contains
at least one copy of each {\rm MCM} module, up to a degree shift. 
Then $E:=\uEnd_{B}(M)$ is a noetherian PI generalized AS 
regular algebra \textnormal{(}Definition~{\rm{\ref{xxdef3.9}}}\textnormal{)} 
of dimension $2$.}
\end{introthm}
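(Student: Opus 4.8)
The plan is to apply Theorem~\ref{xxthmD} to $B$ to get the global dimension, and then upgrade this to the statement that $E = \uEnd_B(M)$ is a noetherian PI generalized AS regular algebra of dimension $2$, essentially by verifying each clause of Definition~\ref{xxdef3.9} (generalized AS regularity) in turn. First I would establish the ring-theoretic finiteness properties. Since $B$ is module-finite over the central noetherian graded subalgebra $C$, and $M$ is a finite direct sum of finitely generated (MCM) $B$-modules, $M$ is module-finite over $C$; hence $E = \uEnd_B(M)$ is also module-finite over the image of $C$ in its center. This immediately gives that $E$ is noetherian (both sides), satisfies a polynomial identity, and is a finitely generated algebra — and it realizes $E$ as a ${\mathbb Z}$-graded, bounded-below, locally finite (indeed commonly graded, in the sense of Definition~\ref{xxdef1.2}(4)) algebra, which is the setting in which "generalized AS regular" is defined. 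Because $B$ is CM of dimension $2$, every MCM module has depth $2$, so a standard depth/Ext argument shows $\uHom_B(M,M)$ has the expected behaviour and that passing to $E$ does not lose finiteness.

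Next I would pin down the global dimension and the Gorenstein-type condition. Applying Theorem~\ref{xxthmD} with $d = \cd(B) = 2$: part (1) gives right global dimension at most $\max\{2,2\} = 2$, and part (2) (the case $d \geq 2$) gives that it equals exactly $2$; by the left-right symmetry afforded by the PI/module-finite-over-center structure (or by running the argument of Theorem~\ref{xxthmD} on the opposite side, using that $M$ is also a direct sum of MCM left modules over $B^{\op}$), the left global dimension is $2$ as well. So $E$ has finite global dimension $2$, which is the "regular" half of AS regularity. For the Gorenstein half — that $\uExt^i_E(\kk\text{-type simple data}, E)$ is concentrated in a single degree $i = 2$ and yields an invertible bimodule there — I would use that $E$, being the endomorphism ring of a module over a ring with balanced dualizing complex that is CM of dimension $2$, inherits a dualizing complex; the finite global dimension then forces this dualizing complex to be (a shift of) an invertible bimodule, which is precisely the AS-Gorenstein condition in the generalized/commonly-graded formulation of Definition~\ref{xxdef3.9}. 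Here the finite-CM-type hypothesis is what guarantees $M$ is "big enough" that $E$ is, up to Morita-type considerations, a genuine noncommutative resolution with no missing summands, so the dualizing complex computation is clean.

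The main obstacle I expect is \emph{not} the global dimension bound — that is handed to us by Theorem~\ref{xxthmD} — but rather verifying the AS-Gorenstein (invertible-dualizing-complex) condition in the correct generality, because $E$ need not be connected graded or even ${\mathbb N}$-graded: it is only commonly graded. So the usual AS-Gorenstein criterion phrased via $\uExt^*_E(\kk,E) = \kk$ in a single degree does not literally apply, and one must work with the appropriate notion of "simple modules / augmentation data" for a commonly graded PI algebra, and check that local duality and the dualizing complex formalism (which we have for $B$ via its balanced dualizing complex) descend correctly to $E$. I would handle this by transporting the balanced dualizing complex of $B$ along $\R\uHom_B(M,-)$ to obtain a dualizing complex over $E$, showing it is concentrated in the right degree using that each summand of $M$ is MCM (depth $=2=\cd B$), and then invoking that an algebra of finite global dimension with a dualizing complex has an invertible dualizing complex — the argument parallel to the commutative fact that a CM ring of finite global dimension is regular, combined with Leuschke's argument structure in \cite{Le1}. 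Finally I would assemble these pieces to conclude $E$ is a noetherian PI generalized AS regular algebra of dimension $2$.
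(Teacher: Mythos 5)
Your proposal is correct and follows essentially the same route as the paper: global dimension $2$ from Theorem~\ref{xxthm5.4}, noetherian PI and the balanced dualizing complex from module-finiteness over $C$ (Lemma~\ref{xxlem5.1}), the depth estimate for $\uEnd_B(M)$ (Lemma~\ref{xxlem3.16}) to concentrate the dualizing complex in a single degree, and then invertibility of $\Omega=R^2\Gamma_{\fm}(E)^*$. The only place the paper is more explicit than your sketch is the last step, where it first shows $\Omega$ is MCM and hence projective via the graded Auslander--Buchsbaum formula and then invokes the Yekutieli--Zhang tilting-complex result to upgrade the stalk dualizing complex to an invertible bimodule.
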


The paper is organized as follows.  Section 1 reviews required definitions and results 
pertaining to AS regular algebras, Hopf algebra actions, and module categories.  
Section 2 contains  some of the results on the module category correspondences and
 the Gabriel and McKay quivers.  Section 3  provides results on local cohomology and 
the Cohen-Macaulay conditions, including a number of preliminary results extended 
to the commonly graded setting.  Section 4 
contains the generalization of Herzog's Theorem, and  Section 5 contains the 
generalizations of Leuschke's theorems.

%%%%%%%%%%%%%%                        SECTION 1  %%%%%%%%%%%%%%%%%%%%%%%
%%%%%%%%%%%%%%%%%%%%%%%%%%%%%%%%%%%%%%%%%%%%%%%%%%%%%%%%%%%%%%%%%%%%%%%%%%
%%%%%%%%%%%%%%%%%%%%%%%%%%%%%%%%%%%%%%%%%%%%%%%%%%%%%%%%%%%%%%%%%%%%%%%%%%%%%%%%%%%%%%%%%%%%%%%%%%%%%%%%%%%%%%%
%%%%%%%%%%%%%%%%%%%%%%%%%%%%%%%%%%%%%%%%%%%%%%%%%%%%%%%%%%%%%%%%%%%%%%%%%%%%%%%%%%%%%%%%%%%%%%%%%%%%%%%%%%%%%%%
%%%%%%%%%%%%%%%%%%%%%%%%%%%%%%%%%%%%%%%%%%%%%%%%%%%%%%%%%%%%%%%%%%%%%%%%%%%%%%%%%%%%%%%%%%%%%%%%%%%%%%%%%%%%%%%

\section{Preliminaries} 
\label{xxsec1}

In this section we briefly review background material on 
Artin-Schelter regular algebras, Hopf algebra actions on 
graded algebras,  and some abelian categories
of (certain) modules over a given algebra.

Recall that $\underline{\otimes}$ (respectively, $\uTor, \uHom, \uEnd, \uExt$) 
denotes the grading preserving operation $\otimes$ (respectively, $\Tor, \Hom, \End, \Ext$).

\subsection{Artin-Schelter regular algebras} 
\label{xxsec1.1}

An algebra $A$ is said to be {\it connected graded} if $A =
\kk \oplus A_1 \oplus A_2 \oplus \cdots$ with $A_i \cdot A_j \subseteq
A_{i+j}$ for all $i,j \in \mathbb{N}$.  We consider a class of noncommutative
graded algebras that serve as noncommutative analogues of commutative
polynomial rings. These algebras are defined as follows.

\begin{definition} 
\label{xxdef1.1} 
A connected graded algebra $A$ is called 
\emph{Artin-Schelter {\rm{(}}AS{\rm{)}} Gorenstein} 
if the following conditions hold:
\begin{enumerate} 
\item[(a)]
$A$ has finite injective dimension $d$ on both sides, 
\item[(b)]
$\Ext^i_A(\kk, A) = \Ext^i_{A^{\op}}(\kk, A)= 0$ for
all $i \neq d$ where $\kk = A/A_{\geq 1}$, and 
\item[(c)]
$\Ext^d_A(\kk, A)\cong \kk(\ell)$ and $\Ext^d_{A^{\op}} (\kk, A) 
\cong  \kk(\ell)$ for some integer $\ell$. 
\end{enumerate} 
The integer $\ell$ is called the \emph{AS index} of $A$. If moreover, 
\begin{enumerate} 
\item[(d)] 
$A$ has finite global dimension $d$, 
\item[(e)] 
$A$ has finite Gelfand-Kirillov dimension, 
\end{enumerate} 
then $A$ is called \emph{Artin-Schelter  {\rm{(}}AS{\rm{)}} regular} 
of dimension $d$. 
\end{definition}

The AS regular algebras of global dimension 2 generated in degree 1 
are given in \cite[Example 1.2]{PartI}. Even if $A$ is connected graded,
graded $A$-modules need not be ${\mathbb N}$-graded.
We introduce the following.

\begin{definition}
\label{xxdef1.2} 
Let $M = \bigoplus_{i \in \mathbb{Z}} M_i$ be a ${\mathbb Z}$-graded module (or $\Bbbk$-vector space or 
an algebra).
\begin{enumerate}
\item[(1)]
The {\it lower bound} of $M$ is defined to be
$$b_{l}(M)=\min\{ i\mid M_i\neq 0\};$$
$M$ is called {\it bounded below} if $b_{l}(M)>-\infty$.
\item[(2)]
The {\it upper bound} of $M$ is defined to be
$$b_{u}(M)=\max\{i \mid M_i\neq 0\};$$
$M$ is called {\it bounded above} if $b_{u}(M)<\infty$.
\item[(3)]
$M$ is called {\it locally finite} if $\dim_{\Bbbk} M_i<\infty$
for every $i$.
\item[(4)]
$M$ is called {\it commonly graded}
if it is bounded below and
locally finite. 
\end{enumerate}
\end{definition}

\subsection{Hopf algebra actions} 
\label{xxsec1.2}

Throughout this work $H$ stands for a Hopf algebra over $\kk$ with
 structural notation $(H, m, u, \Delta, \epsilon, S)$. We use
Sweedler notation; namely, $\Delta(h) =: \sum h_1\otimes h_2$. 
The action of $H$ on an algebra $A$ is given as follows.

\begin{definition} 
\label{xxdef1.3}
Let $A$ be an algebra and $H$ a Hopf algebra. 
\begin{enumerate} 
\item[(1)] 
We say that $A$ is a 
{\it \textnormal{(}left\textnormal{)} $H$-module algebra}, 
or that $H$ {\it acts on} $A$, if $A$ is an algebra in the category of left 
$H$-modules. Equivalently, $A$ is a left $H$-module such that
 $h(ab)=\sum h_1(a)
h_2(b)$ and $h(1_A) = \epsilon(h)1_A$
 for all $h\in H$, and all $a, b\in A$. 
\item [(2)]
We say that $A$ is a {\it graded $H$-module algebra} if $A$ is an
algebra in the category of ${\mathbb Z}$-graded (left) $H$-modules 
(with elements in $H$ having degree zero), or equivalently, 
each homogeneous component of $A$ is a left $H$-submodule.
\item[(3)] 
Given an $H$-module algebra, we form 
{\it the smash product algebra $A\# H$}, which is equal to 
$A\otimes H$ as a $\kk$-vector space,  and 
the multiplication in $A\# H$ is given by 
$(a\# h)(a'\# h') = \sum a h_1(a')\# h_2h'$
for all $h, h' \in  H$ and $a, a' \in  A$. 
\end{enumerate} 
\end{definition}

We identify $H$ with a subalgebra of $A\# H$ via the map $i_H :
h\rightarrow 1\#h$ for all $h\in H$, and {we} identify $A$ with a subalgebra
of $A\# H$ via the map $i_A : a \rightarrow a\#1$ for all $a \in A$.
If $A$ is a graded $H$-module algebra, then $A\# H$
is graded with $\deg h=0$ for all $0\neq h\in H$ and $A$ is a graded
subalgebra of $A\# H$. If $A$ is commonly graded 
and $H$ is finite dimensional, then $A\# H$ is commonly graded and 
$(A\# H)_i=A_i\# H$ for all $i$. 

Sometimes it is useful to  to restrict ourselves to Hopf 
($H$-)actions that do not factor
through the action of a proper Hopf quotient of $H$.

\begin{definition} 
\label{xxdef1.4} 
Let $M$ be a left $H$-module. We say that $M$ is an {\it inner faithful} 
$H$-module, or $H$ \emph{acts inner faithfully} on $M$, if $IM\neq 0$ 
for every nonzero Hopf ideal $I$ of $H$. The same terminology applies 
to $H$-module algebras $A$.
\end{definition}

Moreover, the homological determinant of a Hopf algebra action on an
Artin-Schelter algebra is given below. Recall that  a
connected graded algebra $A$ is AS regular if and only if the $\Ext$-algebra 
$E(A):=\bigoplus_{i\geq 0} \Ext^i_A(_A\kk,_A\kk)$ of $A$ is Frobenius \cite[Corollary D]{LPWZ}. 

\begin{definition}
\label{xxdef1.5} 
Retain the notation above. Let $A$ be an AS regular algebra with 
Frobenius $\Ext$-algebra $E$. Suppose ${\mathfrak e}$ is a 
nonzero element in $\Ext^d_A(_A\kk,_A\kk)$, where $d=\gldim(A)$.
Let $H$ be a Hopf algebra acting on $A$ from
the left, and hence $H$ acts on $E$ from the left. 
\begin{enumerate}
\item[(1)] The
{\it homological determinant} of the $H$-module algebra $A$ is 
defined to be $\eta\circ S$, where $\eta: H\to k$ is
determined by 
$h\cdot {\mathfrak e} = \eta(h) {\mathfrak e}$.
\item[(2)]
The
homological determinant is {\it trivial} if $\hdet_H A=\epsilon$.
\end{enumerate}
\end{definition}

\subsection{Module categories} 
\label{xxsec1.3} 
For an algebra $A$, consider the following categories of modules over $A$.

\begin{itemize}
\item  $A\mhyphen \Mod$ (respectively, $\Mod\mhyphen A$):
the category of all left (respectively, right) $A$-modules. 

\smallskip

\item $A\mhyphen \Prm$ (respectively, $\Prm\mhyphen A$): 
the full subcategory of $A\mhyphen \Mod$ (respectively, $\Mod\mhyphen A$) consisting of
projective left (respectively, right) $A$-modules.

\smallskip

\item  $A\mhyphen \Grm$ (respectively, $\Grm\mhyphen A$), when $A$ is graded:
the category of $\mathbb{Z}$-graded  left (respectively, right) $A$-modules.

\smallskip

\item $A\mhyphen \Grprm$ (respectively, $\Grprm\mhyphen A$), when $A$ is graded: 
the full subcategory of  $A\mhyphen \Grm$ (respectively, $\Grm\mhyphen A$) consisting of
projective graded left (respectively, right) $A$-modules.

\smallskip

\item $A\mhyphen \xyz$, when $A\mhyphen \Xyz$ is a category of left  $A$-modules with property
$\Xyz$: the full subcategory of $A\mhyphen \Xyz$ consisting of 
{\it finite} (that is, finitely generated)
modules in $A\mhyphen \Xyz$.

\smallskip

\item $A\mhyphen \Xyz_0$, when $A$ is graded and $A\mhyphen \Xyz$ is a category of left $A$-modules with property $\Xyz$: the full subcategory of 
$A\mhyphen \Xyz$ consisting of {\it initial} modules, that is, 
$M \in A\mhyphen \Xyz$ generated in degree 0 with $M_{<0}=0$.

\smallskip

\item $\add_{A\mhyphen \grm}M$, when $A$ is graded and $M$ is a 
finite graded left $A$-module: the full subcategory of 
$A\mhyphen \grm$ containing all direct 
summands of finite direct sums of degree shifts of $M$.

\smallskip

\item $\xyz \mhyphen A$, $\Xyz_0 \mhyphen A$, and $\add_{\grm \mhyphen A}M$, are defined likewise for right $A$-modules.
\end{itemize}

Unless otherwise stated, we  work with left modules.
However, in the notation $\End_{A^H}(A)$ (or $\End_{B}(M)$), 
$A$ (or $M$) is considered as a right $A^H$-module (or a right
$B$-module), as it was in 
\cite{PartI}. 
Sometimes it is easy to switch from the left to the right 
as the next well-known lemma shows.
A contravariant equivalence between two categories is called a
\emph{duality}. 

\begin{lemma} 
\label{xxlem1.6} 
Let $A$ be an algebra. Then there is a
duality of categories $$\Hom_A(-,A_A): \prm\mhyphen A\to A\mhyphen \prm.$$
As a consequence, there is a bijection between the isomorphism classes of
indecomposable finite projective left $A$-modules and that of
indecomposable finite projective right $A$-modules.
\qed
\end{lemma}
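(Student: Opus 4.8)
The plan is to exhibit the $A$-dual functor as a duality on finitely generated projectives and then read off the bijection from the general principle that a duality preserves indecomposability and is bijective on isomorphism classes. Write $(-)^{*}:=\Hom_A(-,A_A)$; for a right $A$-module $M$ the group $M^{*}$ becomes a left $A$-module via $(a\cdot f)(m)=af(m)$, using the $(A,A)$-bimodule structure of ${}_AA_A$, so $(-)^{*}$ is an additive contravariant functor $\Mod\mhyphen A\to A\mhyphen\Mod$. Symmetrically, $(-)^{\vee}:=\Hom_A({}_A(-),{}_AA)$ is an additive contravariant functor $A\mhyphen\Mod\to\Mod\mhyphen A$. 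The basic computation is $\Hom_A(A_A,A_A)\cong{}_AA$ as left $A$-modules via $f\mapsto f(1)$ (and likewise ${}_AA^{\vee}\cong A_A$); since both functors are additive they send $A_A^{\oplus n}$ to ${}_AA^{\oplus n}$ and conversely, and they preserve split idempotents, hence direct summands. As every finitely generated projective is a direct summand of a finite free module, $(-)^{*}$ and $(-)^{\vee}$ restrict to functors $\prm\mhyphen A\to A\mhyphen\prm$ and $A\mhyphen\prm\to\prm\mhyphen A$ respectively.

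Next I would check these restrictions are mutually quasi-inverse. There is a natural biduality map $M\to (M^{*})^{\vee}$, $m\mapsto(f\mapsto f(m))$, of right $A$-modules; it is an isomorphism for $M=A_A$ by direct inspection, hence for every finite free module by additivity, and hence for every finitely generated projective module $M$, since such $M$ is a retract of a finite free module and the biduality map is natural. Symmetrically the biduality map $N\to(N^{\vee})^{*}$ is an isomorphism for every finitely generated projective left module $N$. Therefore $(-)^{*}\colon\prm\mhyphen A\to A\mhyphen\prm$ and $(-)^{\vee}$ are inverse contravariant equivalences, i.e.\ a duality, which proves the first statement.

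For the consequence, recall that a duality $F$ induces for each object $P$ a ring anti-isomorphism $\End(P)\xrightarrow{\ \sim\ }\End(FP)$; an object is indecomposable exactly when its endomorphism ring has only the idempotents $0$ and $1$, a condition preserved under any (anti-)isomorphism of rings, so $F$ carries indecomposables to indecomposables. Being an (anti-)equivalence, $F$ is moreover injective and surjective on isomorphism classes. Applying this to $F=(-)^{*}$ gives the asserted bijection between isomorphism classes of indecomposable finite projective right $A$-modules and of indecomposable finite projective left $A$-modules. The only mildly delicate point is the passage from free to projective modules when verifying that the biduality map is an isomorphism, but this is the routine observation that a natural transformation which is an isomorphism on $A_A$ and compatible with finite direct sums is automatically an isomorphism on every retract of a finite direct sum of copies of $A_A$; so no genuine obstacle arises.
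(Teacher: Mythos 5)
Your proof is correct. The paper states this lemma without proof (it is flagged as "well-known" and closed with \qed), and your argument --- restricting $\Hom_A(-,A)$ to finitely generated projectives, checking the biduality map is an isomorphism on $A_A$ and hence on retracts of finite free modules, and then transporting indecomposability through the induced anti-isomorphism of endomorphism rings --- is exactly the standard argument the authors are invoking.
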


%%%%%%%%%%%%%  SECTION 2
%%%%%%%%%%%%%%%%%%%%%%%%%%%%%%%%%%%%%%%%%%%%%%%%%%%%%%%%%%%%%%%%%%%%%%%%%%
%%%%%%%%%%%%%%%%%%%%%%%%%%%%%%%%%%%%%%%%%%%%%%%%%%%%%%%%%%%%%%%%%%%%%%%%%%%%%%%%%%%%%%%%%%%%%%%%%%%%%%%%%%%%%%%
%%%%%%%%%%%%%%%%%%%%%%%%%%%%%%%%%%%%%%%%%%%%%%%%%%%%%%%%%%%%%%%%%%%%%%%%%%%%%%%%%%%%%%%%%%%%%%%%%%%%%%%%%%%%%%%
%%%%%%%%%%%%%%%%%%%%%%%%%%%%%%%%%%%%%%%%%%%%%%%%%%%%%%%%%%%%%%%%%%%%%%%%%%%%%%%%%%%%%%%%%%%%%%%%%%%%%%%%%%%%%%%
%%%%%%%%%%%%%%%%%%%%%%%%%%%%%%%%%%%%%%%%%%%%%%%%%%%%%%%%%%%%%%%%%%%%%%%%%%%%%%%%%%%%%%%%%%%%%%%%%%%%%%%%%%%%%%%
%%%%%%%%%%%%%%%%%%%%%%%%%%%%%%%%%%%%%%%%%%%%%%%%%%%%%%%%%%%%%%%%%%%%%%%%%%%%%%%%%%%%%%%%%%%%%%%%%%%%%%%%%%%%%%%
%%%%%%%%%%%%%%%%%%%%%%%%%%%%%%%%%%%%%%%%%%%%%%%%%%%%%%%%%%%%%%%%%%%%%%%%%%%%%%%%%%%%%%%%%%%%%%%%%%%%%%%%%%%%%%%
%%%%%%%%%%%%%%%%%%%%%%%%%%%%%%%%%%%%%%%%%%%%%%%%%%%%%%%%%%%%%%%%%%%%%%%%%%%%%%%%%%%%%%%%%%%%%%%%%%%%%%%%%%%%%%%
%%%%%%%%%%%%%%%%%%%%%%%%%%%%%%%%%%%%%%%%%%%%%%%%%%%%%%%%%%%%%%%%%%%%%%%%%%%%%%%%%%%%%%%%%%%%%%%%%%%%%%%%%%%%%%%
%%%%%%%%%%%%%%%%%%%%%%%%%%%%%%%%%%%%%%%%%%%%%%%%%%%%%%%%%%%%%%%%%%%%%%%%%%%%%%%%%%%%%%%%%%%%%%%%%%%%%%%%%%%%%%%
%%%%%%%%%%%%%%%%%%%%%%%%%%%%%%%%%%%%%%%%%%%%%%%%%%%%%%%%%%%%%%%%%%%%%%%%%%%%%%%%%%%%%%%%%%%%%%%%%%%%%%%%%%%%%%%
\section{Correspondences between module categories}
%over $H$, $A\# H$, and ${\text{End}}_{A^{H}} A$} 
\label{xxsec2}
Let $H$ be a semisimple Hopf algebra and $A$ an $H$-module algebra.
In this section we establish several equivalences between categories of
modules over $H$, over the smash product algebra $A \# H$, and  over the
endomorphism ring $\End_{ A^{H}}(A)$.  Some of the
proofs are 
generalizations of results in the commutative setting, and
a good review of the commutative case can be found in \cite{Le2}. We also define the Gabriel
quiver in our setting (Definition~\ref{xxdef2.8}) and establish
that it is isomorphic to the McKay quiver (Definition~\ref{xxdef2.9})  studied in \cite{PartI}.

\subsection{Projective modules over the smash product algebra} 
\label{xxsec2.1}
We begin with a useful lemma that  was proved in \cite{Gu}.

\begin{lemma} 
\label{xxlem2.1} 
Let $H$ be a semisimple Hopf algebra acting on an algebra 
$A$. Let $M$ and $N$ be left $A\# H$-modules. Then the 
following statements hold. 
\begin{enumerate} 
\item[(1)] 
For every $i \geq 0$, $\Ext^i_{A\# H}(M,N)=\Ext^i_A(M,N)^H$. 
\item[(2)] 
$M$ is projective over $A\#H$ if and only if it is 
projective over $A$.
\end{enumerate} 
\end{lemma}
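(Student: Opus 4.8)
The plan is to establish part (1) by a direct averaging argument using the semisimplicity of $H$, and then derive part (2) as a corollary. First I would recall the key structural fact: since $H$ is semisimple, there is a (normalized) two-sided integral $\int \in H$ with $\epsilon(\int)=1$, and this gives a Reynolds-type operator that makes $A$ a direct summand of $A\#H$ as an $(A,A)$-bimodule (indeed $A\#H \cong A \otimes H$ as a left $A$-module, and $H$ is free over $\kk$ so this is just a finite free $A$-module; more precisely the multiplication $A\#H \to A$, $a\#h \mapsto \epsilon(h)a$, is split by $a \mapsto a\#1$, and the integral furnishes the bimodule splitting). The standard statement is: for left $A\#H$-modules $M,N$, the space $\Hom_A(M,N)$ carries a left $H$-action via $(h\cdot f)(m) = \sum h_1 f(S(h_2)m)$, and $\Hom_{A\#H}(M,N) = \Hom_A(M,N)^H$, the $H$-invariants. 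The first task is to promote this to $\Ext$.

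The key steps, in order: (i) Record the $H$-module structure on $\Hom_A(M,N)$ as above and verify $\Hom_{A\#H}(M,N) = \Hom_A(M,N)^H$ by a direct check. (ii) Choose a resolution $P_\bullet \to M$ by projective left $A\#H$-modules; by the restriction-of-scalars remark (or by (2), which we prove independently), each $P_i$ is also projective as a left $A$-module, so $P_\bullet \to M$ is simultaneously an $A$-projective resolution. (iii) Then $\Ext^i_{A\#H}(M,N) = H^i(\Hom_{A\#H}(P_\bullet,N)) = H^i(\Hom_A(P_\bullet,N)^H)$. (iv) Since $H$ is semisimple, taking $H$-invariants is exact, hence commutes with cohomology: $H^i(\Hom_A(P_\bullet,N)^H) = H^i(\Hom_A(P_\bullet,N))^H = \Ext^i_A(M,N)^H$. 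This gives (1). For (2): if $M$ is $A$-projective, then $\Ext^i_A(M,N)=0$ for $i\geq 1$ and all $N$, so by (1) $\Ext^i_{A\#H}(M,N)=0$, whence $M$ is $A\#H$-projective. Conversely, any $A\#H$-projective module is a summand of a free $A\#H$-module, and $A\#H$ is $A$-free (of rank $\dim_\kk H$), so $M$ is $A$-projective. One should note that the converse direction of (2) is also needed in step (ii) above to know $P_i$ is $A$-projective, so it is cleanest to prove the (easy) converse of (2) first, use it in the proof of (1), and then deduce the forward direction of (2) from (1).

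The main obstacle — really the only subtle point — is step (iv): the interchange of $H$-invariants with cohomology. This rests squarely on semisimplicity of $H$, which guarantees that the functor $(-)^H = \Hom_H(\kk, -)$ on $H$-modules is exact (equivalently, $\kk$ is $H$-projective via the integral); without this hypothesis the lemma fails. I would make this explicit by writing the invariants functor as a direct-summand projection: $V^H$ is the image of the idempotent $v \mapsto \int \cdot v$ on any $H$-module $V$, so $(-)^H$ is a direct-summand functor and thus exact and commuting with all limits and colimits, in particular with the cohomology of a cochain complex. Everything else is bookkeeping: checking that the $H$-action on $\Hom_A$ is well-defined and natural in the complex variable, and that $\Hom_{A\#H} = (\Hom_A)^H$. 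Since the lemma is attributed to \cite{Gu}, I would keep the write-up brief, emphasizing only the integral/semisimplicity input and the two-step logic ($A$-projectivity of $A\#H$-projectives $\Rightarrow$ (1) $\Rightarrow$ full (2)).
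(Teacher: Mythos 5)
Your proposal is correct and, for part (2), follows exactly the paper's argument: the forward direction uses that $A\#H$ is free over $A$ so projectives restrict to projectives, and the converse is deduced from the vanishing of $\Ext^1_{A\#H}$ via part (1). For part (1) the paper simply cites \cite[Corollary 2.17]{Gu}, and the averaging argument you supply (identifying $\Hom_{A\#H}=\Hom_A(-,-)^H$ and commuting the exact invariants functor past cohomology) is the standard proof of that cited result, so there is no substantive divergence.
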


\begin{proof} 
Part (1) is  \cite[Corollary 2.17]{Gu}.

(2) If $M$ is projective over $A\# H$, there is an $A\# H$-module $Q$
such that $M\oplus Q$ is free over $A\# H$. Since $A\# H$ is free over
$A$, then $M\oplus Q$ is free over $A$. Hence, $M$ is projective
over $A$. The converse follows directly from part (1). 
\end{proof}

Next we study the category of graded modules over $A\# H$ when $A$ is a 
{\it connected graded} algebra and $H$ is a semisimple Hopf algebra acting on
$A$. Note that  $A\# H$ is a graded algebra extension of
$A$ with $\deg h=0$ for all $h\in H$.

\begin{lemma} 
\label{xxlem2.2} 
Retaining the notation above, let $M$ be a commonly graded 
left $A\# H$-module. Then the following statements hold. 
\begin{enumerate} 
\item[(1)] 
The projective cover of $M$ is $A\otimes (M/\fm M)$, where  $\fm$ is 
the maximal graded ideal of $A$. 
\item[(2)] 
If $M$ is generated in degree $i$, then the projective cover of $M$ 
is $(A\otimes M_i)[-i]$. 
\end{enumerate} 
\end{lemma}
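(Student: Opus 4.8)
The plan is to exploit the fact that a commonly graded $A\#H$-module is automatically a commonly graded $A$-module (since $A\#H$ is a graded algebra extension of $A$, finite over $A$ because $H$ is finite dimensional), so it has a well-behaved graded projective cover over $A$, and then to upgrade this to a projective cover over $A\#H$ using the semisimplicity of $H$. First I would recall the standard construction over the connected graded algebra $A$: for a commonly graded $A$-module $M$, set $\bar M = M/\fm M$, which is a $\mathbb{Z}$-graded $\kk$-vector space, bounded below and locally finite; choosing a graded $\kk$-linear splitting $\bar M \to M$ of the quotient map and extending $A$-linearly gives a surjection $A \otimes \bar M \to M$ whose kernel lies in $\fm(A\otimes\bar M)$, i.e. it is a graded projective cover. (When $M$ is generated in a single degree $i$, $\bar M$ is concentrated in degree $i$, so $A\otimes\bar M = (A\otimes M_i)[-i]$, which immediately yields part (2) from part (1).)

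Next I would promote this to the smash product. Since $A$ is a graded $H$-module algebra and $M$ is a graded $A\#H$-module, the subspace $\fm M$ is an $H$-submodule (because $\fm$ is an $H$-stable graded ideal, as each graded piece of $A$ is an $H$-submodule), so $\bar M = M/\fm M$ inherits the structure of a commonly graded left $H$-module. Here is where semisimplicity enters: the quotient map $M \to \bar M$ is a surjection of graded $H$-modules, and because $\kk H$ is semisimple this surjection splits $H$-equivariantly and degreewise — concretely, one can choose the $\kk$-linear section $\bar M \to M$ above to be $H$-linear (average an arbitrary section over $H$, or invoke semisimplicity of graded $H$-modules directly). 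Extending this $H$-linear, graded section $A$-linearly produces an $A$-module surjection $A \otimes \bar M \to M$; I would then check it is in fact $A\#H$-linear, using the module-algebra axiom $h(am) = \sum h_1(a)\,h_2(m)$ together with the matching comultiplication on $A\otimes\bar M$ (which is an $A\#H$-module via the diagonal-type action $h\cdot(a\otimes v) = \sum h_1(a)\otimes h_2 v$, valid since $A$ is an $H$-module algebra). By Lemma~\ref{xxlem2.1}(2), $A\otimes\bar M$ is projective over $A\#H$ because it is visibly free over $A$; and the surjection $A\otimes\bar M \to M$ has kernel contained in $\fm(A\otimes\bar M) \subseteq (A\#H)_{\geq 1}(A\otimes\bar M)$, so it is superfluous, giving a projective cover over $A\#H$. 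Finally, part (2) follows exactly as in the $A$-module case: generation in degree $i$ forces $\bar M$ to be concentrated in degree $i$, whence $A\otimes\bar M \cong (A\otimes M_i)[-i]$ as graded $A\#H$-modules.

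The main obstacle I anticipate is bookkeeping around gradings and boundedness rather than anything conceptually deep: one must make sure that "commonly graded" (bounded below and locally finite) is preserved at each step — for $\bar M$, for $A\otimes\bar M$, and in the kernel — so that the Nakayama-type argument (kernel inside $\fm(A\otimes\bar M)$ is superfluous) is actually valid; connectedness of $A$ and finite-dimensionality of $H$ are exactly what make this work. A secondary subtlety is verifying that the chosen section can be taken simultaneously graded and $H$-linear and that the resulting map respects the $A\#H$-action; this is a routine diagram chase with Sweedler notation but is the one place an error could creep in. I expect no difficulty from the projectivity claim, since Lemma~\ref{xxlem2.1}(2) reduces it to the trivial observation that $A\otimes\bar M$ is $A$-free.
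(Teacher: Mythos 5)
Your proposal is correct and follows essentially the same route as the paper: the $H$-equivariant splitting of $M\to M/\fm M$ (guaranteed by semisimplicity of $H$) is exactly the paper's choice of a graded $H$-submodule $M_0\cong M/\fm M$ generating $M$, the $A\#H$-structure on $A\otimes(M/\fm M)$ is the paper's \eqref{E2.2.1}, projectivity is obtained from Lemma~\ref{xxlem2.1}(2) in both arguments, and the cover property is verified in both by observing that the surjection becomes an isomorphism after applying $(A/\fm)\otimes_A-$ (equivalently, that the kernel lies in $\fm(A\otimes(M/\fm M))$). No gaps; part (2) follows from part (1) exactly as you indicate.
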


\begin{proof} 
(1) Since $H$ is semisimple, $M/\fm M$ is isomorphic to a graded 
$H$-submodule of $M$, say $M_0$, that generates $M$ as a left $A$-module. 
Then $A\otimes M_0$ is a graded left $A\# H$-module determined by 
\begin{equation} \label{E2.2.1} \tag{E2.2.1} 
(x\# h)(a\otimes m) = \textstyle \sum xh_1(a)\otimes h_2(m)
\end{equation} 
for all $x,a\in A$, $h\in H$ and $m\in M_0$. Note that $A\otimes M_0$ is a free 
left $A$-module. By Lemma \ref{xxlem2.1}(2), $A\otimes M_0$ is a graded
projective $A\# H$-module. Define a map 
\begin{eqnarray*}
\phi: A\otimes M_0\to M 
&\mathrm{by}&
\phi(a\otimes m)=am
\end{eqnarray*} for all $a\in A$ and $m\in M_0$.
It is easy to check that this map is a surjective $A\# H$-module morphism, and $A \otimes M_0$
 is a projective cover of $M$ as a left $A$-module because
$(A/\fm)\otimes_{A} \phi$ is an isomorphism by the choice of $M_0$.
Therefore $A \otimes (M/\mathfrak{m}M)$ is a projective cover of $M$ as a left $A\# H$-module.

Clearly (2) follows from (1). 
\end{proof}

This lemma prompts the following proposition.

\begin{proposition} 
\label{xxpro2.3} 
Let $H$ be a semisimple Hopf
algebra acting on a connected graded algebra $A$. 
Then we have the following equivalences of categories:
\begin{equation}
\label{E2.3.1}\tag{E2.3.1} 
A\# H\mhyphen \Grprm \simeq  H\mhyphen \Grm,     
\end{equation}
\begin{equation}
\label{E2.3.2}\tag{E2.3.2}
A\# H\mhyphen \Grprm_0 \simeq  H\mhyphen \Mod.  
\end{equation}
\end{proposition}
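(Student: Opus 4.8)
The plan is to construct a functor in each direction and check that they are mutually inverse. For \eqref{E2.3.1}, the functor $F: A\# H\mhyphen \Grprm \to H\mhyphen \Grm$ sends a graded projective $A\#H$-module $P$ to $P/\fm P$, which is a graded $A/\fm = \kk$-module carrying an $H$-action since $\fm$ is an $H$-stable graded ideal; thus $F(P)\in H\mhyphen \Grm$. In the other direction, $G: H\mhyphen \Grm \to A\# H\mhyphen \Grprm$ sends a graded $H$-module $V$ to $A\otimes V$, equipped with the $A\#H$-action of \eqref{E2.2.1}; by Lemma~\ref{xxlem2.1}(2) this is a graded projective $A\#H$-module, since it is $A$-free. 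First I would verify that $G$ is well-defined and functorial (functoriality is immediate from $\id_A\otimes(-)$), and that $F$ is well-defined on morphisms (any $A\#H$-map is in particular an $A$-map, hence descends modulo $\fm$, and it is $H$-equivariant).

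Next I would establish the two natural isomorphisms. The composite $F\circ G$ sends $V$ to $(A\otimes V)/\fm(A\otimes V) = (A/\fm)\otimes V = V$ as graded $H$-modules, so $F\circ G\cong \id$ naturally. For $G\circ F$, given $P\in A\#H\mhyphen\Grprm$, Lemma~\ref{xxlem2.2}(1) gives that $A\otimes(P/\fm P)$ is a projective cover of $P$ as an $A\#H$-module; since $P$ is already projective, the projective cover map $A\otimes(P/\fm P)\to P$ is an isomorphism, and one checks it is natural in $P$. Here one should note that Lemma~\ref{xxlem2.2} is stated for commonly graded modules, so I would first observe that a graded projective $A\#H$-module need not be commonly graded; however $P$ is a direct summand of a (possibly infinite) direct sum of degree shifts of $A\otimes H$ (the free module), and the projective-cover argument of Lemma~\ref{xxlem2.2}(1)—Nakayama plus lifting along $A\otimes(-)$—goes through verbatim for any graded projective $A\#H$-module $P$ since $A$ is connected graded and $A\otimes(P/\fm P)$ is $A$-free. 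This is the point I expect to require the most care: making sure the projective-cover statement and the isomorphism $A\otimes(P/\fm P)\xrightarrow{\sim}P$ hold without a local-finiteness hypothesis on $P$, i.e. extending Lemma~\ref{xxlem2.2} (or re-deriving it) to the generality of \eqref{E2.3.1}.

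Finally, for \eqref{E2.3.2} I would restrict the equivalence \eqref{E2.3.1} to the full subcategory of initial objects on each side. On the left, $A\#H\mhyphen\Grprm_0$ consists of graded projective $A\#H$-modules $P$ generated in degree $0$ with $P_{<0}=0$; by Lemma~\ref{xxlem2.2}(2) such a $P$ is isomorphic to $A\otimes P_0$ with $P_0$ in degree $0$, i.e. $F(P)=P/\fm P$ is concentrated in degree $0$, hence an object of $H\mhyphen\Mod$ (viewed inside $H\mhyphen\Grm$ as modules concentrated in degree $0$). Conversely $G$ carries a degree-$0$ $H$-module $V$ to $A\otimes V$, which is generated in degree $0$ with no negative part, hence lies in $A\#H\mhyphen\Grprm_0$. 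Since $F$ and $G$ are inverse equivalences on the larger categories and each restricts to the indicated subcategories, they restrict to an equivalence $A\#H\mhyphen\Grprm_0 \simeq H\mhyphen\Mod$, as claimed.
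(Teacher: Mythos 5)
Your proof is correct and follows essentially the same route as the paper: the same pair of functors $P\mapsto P/\fm P$ and $V\mapsto A\otimes V$, the same use of Lemma~\ref{xxlem2.1}(2) to see that $A\otimes V$ is projective, the same projective-cover argument via Lemma~\ref{xxlem2.2}(1) and graded Nakayama to show $A\otimes(P/\fm P)\to P$ is an isomorphism, and the same restriction to initial objects for \eqref{E2.3.2}. Your extra remark about extending the projective-cover step beyond the commonly graded hypothesis of Lemma~\ref{xxlem2.2} is a genuine point of care that the paper's proof passes over silently, and your treatment of it is sound.
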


\begin{proof} We first prove the equivalence  \eqref{E2.3.1}.
Let $\fm$ be the maximal graded ideal of $A$, and we define the functors 
\begin{eqnarray*}
\Phi: A\# H\mhyphen \Grprm \rightarrow H\mhyphen \Grm, && 
P \mapsto P/\fm P=A/\fm \otimes_A P\\
\Psi: H\mhyphen \Grm \rightarrow A\# H\mhyphen \Grprm, && 
M \mapsto A\otimes M. 
\end{eqnarray*}

\noindent
Clearly $\Phi$ maps into $H\mhyphen \Grm$. To show that $\Psi$ maps into $A\# H\mhyphen \Grprm$, 
let $M$ be a graded left $H$-module. Then $A\otimes M$ is a graded 
left $A\# H$-module via \eqref{E2.2.1} (by replacing $M_0$ with $M$). By Lemma 
\ref{xxlem2.1}(2), $A\otimes M_i$ is a graded projective $A\# H$-module 
for each $i$. So $A \otimes M$ is a graded projective $A\# H$-module.

Since $A/\fm\cong \kk$, we have 
$$\Phi \circ \Psi (M)= \Phi( A\otimes M)=\Bbbk\otimes M\cong M,$$ 
which implies that $\Phi\circ \Psi$ is
naturally isomorphic to the identity functor of $H\mhyphen \Grm$. For any
$P$ in $A\# H\mhyphen \Grprm$, $P/\fm P$ generates $P$ by the 
graded Nakayama Lemma. So we can define a surjective map 
$$ \gamma: A\otimes (P/\fm P)   \to P, \quad a\otimes p \mapsto ap.$$
Since $A\otimes (P/\fm P)$ is the projective cover of $P$ by Lemma \ref{xxlem2.2}(1),
and since $P$ is projective, it follows that $\gamma$ is an isomorphism. 
Therefore $\Psi\circ \Phi$ is naturally isomorphic to the identity 
functor of $A\# H\mhyphen \Grprm$.

The equivalence \eqref{E2.3.2} holds by restricting the equivalence  
\eqref{E2.3.1} to the initial graded modules over $A\# H$.
\end{proof}

Let $A$ be a connected graded algebra. Then we have
a minimal free resolution of the left trivial $A$-module $\kk$:
\begin{equation} 
\label{E2.3.3}\tag{E2.3.3} \cdots \to A\otimes
Q_{n}\xrightarrow{\phi_n} A\otimes Q_{n-1}\to \cdots \to A\otimes
Q_1\xrightarrow{\phi_1} A\to \kk\to 0 \end{equation} where each $Q_i$
is a graded $\Bbbk$-vector space and each map $\phi_n$ has entries in $\fm$. 
If $A$ is generated in degree one, then $Q_1$ is in degree one.

\begin{lemma} 
\label{xxlem2.4} 
Let $A$ be a connected graded algebra admitting an action of a semisimple 
Hopf algebra $H$. Let $M$ be a graded bounded below left $A\# H$-module. 
Then there is a minimal projective resolution of the $A\# H$-module $M$ 
that is also a minimal free resolution of the left $A$-module $M$. As a result, 
\eqref{E2.3.3} can be considered as a minimal projective resolution of $\kk$ 
as an $A\# H$-module. 
\end{lemma}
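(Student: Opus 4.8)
The plan is to build the desired resolution by starting from a minimal graded free resolution of $M$ as a left $A$-module and then upgrading it, degree by degree, to a resolution by $A\#H$-modules. First I would fix $\fm$, the maximal graded ideal of $A$, and recall that since $H$ is semisimple and $M$ is a graded $A\#H$-module, the quotient $M/\fm M$ is a graded $H$-module, and by Lemma~\ref{xxlem2.2}(1) the module $A\otimes(M/\fm M)$ is the projective cover of $M$ both as an $A$-module and as an $A\#H$-module, with the canonical surjection $\phi_0\colon A\otimes(M/\fm M)\to M$ being simultaneously an $A\#H$-module map and a minimal $A$-module surjection (its kernel lies in $\fm\bigl(A\otimes(M/\fm M)\bigr)$). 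Since $M$ is bounded below and locally finite, the same is true of $A\otimes(M/\fm M)$ and hence of the kernel $\Omega^1 M:=\ker\phi_0$; moreover $\Omega^1 M$ is again a graded, bounded below $A\#H$-module (it is an $A\#H$-submodule of an $A\#H$-module).

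The key step is then an induction: having constructed the first $n$ terms $A\otimes Q_i$ (for $i\le n$), each a graded projective $A\#H$-module free over $A$, with the maps $\phi_i$ all being $A\#H$-module maps and the resolution minimal over $A$, I apply the projective-cover construction of Lemma~\ref{xxlem2.2}(1) to the $n$-th syzygy $\Omega^n M$, which is itself a graded bounded-below $A\#H$-module, to produce the next term $A\otimes Q_{n+1}$ with $Q_{n+1}=\Omega^n M/\fm\,\Omega^n M$ a graded $H$-module, together with an $A\#H$-module surjection onto $\Omega^n M$ whose kernel sits inside $\fm(A\otimes Q_{n+1})$. By Lemma~\ref{xxlem2.1}(2), $A\otimes Q_{n+1}$ is projective over $A\#H$ because it is free over $A$. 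The resulting complex is a projective resolution over $A\#H$; and since at each stage the differential has entries in $\fm$ (the minimality condition over $A$), it is also a \emph{minimal} free resolution of $M$ over $A$. Minimality over $A\#H$ then follows because a complex of graded projective $A\#H$-modules with differentials in $\fm(A\#H)$... actually, more directly, applying the exact functor $A/\fm\otimes_A(-)$ kills all differentials, and since $A\#H$ is graded with $\deg h=0$, a graded $A\#H$-projective resolution whose underlying $A$-resolution is minimal is itself minimal; alternatively one invokes that $\Ext^i_{A\#H}(M,\kk)=\Ext^i_A(M,\kk)^H$ from Lemma~\ref{xxlem2.1}(1) to see the Betti numbers match up.

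For the final assertion, I would take $M=\kk=A/A_{\ge1}$ with the trivial $H$-action: then $M/\fm M=\kk$ is the trivial $H$-module, $Q_1=\fm/\fm^2$ (with its induced $H$-module structure), and the syzygies produced by the construction above are exactly the graded $\Bbbk$-vector spaces $Q_i$ appearing in \eqref{E2.3.3}; since a minimal graded free resolution is unique up to isomorphism, the resolution \eqref{E2.3.3} must coincide with the one just built and hence carries a compatible $A\#H$-module structure making it a minimal projective resolution of $\kk$ over $A\#H$.

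The main obstacle I anticipate is the bookkeeping needed to check that the syzygy modules remain \emph{graded bounded-below} $A\#H$-modules at every stage so that Lemma~\ref{xxlem2.2}(1) applies iteratively, and to verify cleanly that the projective-cover map of Lemma~\ref{xxlem2.2}(1) really is an $A\#H$-module map with kernel inside $\fm(A\otimes Q_{n+1})$ — i.e. that minimality over $A$ is preserved through the $A\#H$-equivariant construction. This is where one uses essentially that $H$ acts in degree zero (so $\fm$ is an $H$-stable graded ideal and $\fm M$ is an $A\#H$-submodule) together with semisimplicity of $H$ (so that $M/\fm M$ lifts to a graded $H$-submodule complement, exactly as in the proof of Lemma~\ref{xxlem2.2}(1)). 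None of these steps is deep, but they must be assembled in the right order; the substantive inputs — that $A$-free implies $A\#H$-projective, and that $\Ext_{A\#H}=(\Ext_A)^H$ — are already available as Lemma~\ref{xxlem2.1}.
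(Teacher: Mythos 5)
Your proposal is correct and follows essentially the same route as the paper: iterate the projective-cover construction of Lemma~\ref{xxlem2.2}(1) on the successive syzygies (each of which remains a graded bounded-below $A\#H$-module), using Lemma~\ref{xxlem2.1}(2) to see that each cover, being $A$-free, is $A\#H$-projective, so the resulting resolution is simultaneously a minimal $A\#H$-projective and minimal $A$-free resolution. The paper's proof is just a terser statement of this same induction.
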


\begin{proof} By Lemmas \ref{xxlem2.1} and \ref{xxlem2.2},
$P_0:=A\otimes (M/\fm M)$ is a projective cover of $M$, which is also a
projective cover of $M$ as a graded $A$-module. Let $M_1=\ker (P_0\to
M)$. Then we have a short exact sequence of graded left $A\# H$-modules
\[0\to M_1\to P_0\to M\to 0.\]
By taking the projective cover of $M_1$,
we obtain $P_1$, which is also the projective cover of $M_1$ as a graded left
$A$-module. The standard process of constructing a minimal projective
resolution leads to the result. 
\end{proof}

\subsection{Modules over endomorphism rings} 
\label{xxsec2.2}
Let $B$ be a graded algebra and let $M$ be a finite graded right $B$-module.
We present below a graded version of a result in \cite{ARS}
that will be used to relate projective modules over the endomorphism ring of
$M$ to $\add_{\grm\mhyphen B} M$; the proof is a straightforward 
adaptation of that in \cite{ARS}.

\begin{proposition} \cite[Proposition~II.2.1]{ARS}
\label{xxpro2.5} 
Let $B$ be a commonly graded algebra and let $M$ be a finite graded 
right $B$-module.  Denote by $E$ the ring 
$\underline{\End}_{\grm\mhyphen B}(M)$ of graded endomorphisms 
of $M$. Then the evaluation functor
\begin{eqnarray*}
e_M:=\uHom_{\grm\mhyphen B} ( M,- ) : 
~\grm\mhyphen B & \longrightarrow & \grm\mhyphen E
\end{eqnarray*}
has the following properties.
\begin{enumerate}
\item[(1)] 
We have that 
$$\uHom_{\grm\mhyphen B} (Y,Z) ~~ \cong ~~\uHom_{\grm\mhyphen E} (e_M(Y), e_M(Z)) $$ 
for $Y \in \add_{\grm\mhyphen B}M$ and $Z \in \grm\mhyphen B$.
\item[(2)] 
If $X$ is in $\add_{\grm\mhyphen  B} M$, then $e_M(X)$ is in $\grprm\mhyphen E$.
\item[(3)] 
$e_{M}|_{\add_{\grm\mhyphen  B} M }$ induces an equivalence of categories 
$$\add_{\grm\mhyphen  B} M \simeq \grprm\mhyphen E.$$

\vspace{-.25in}

 \qed
\end{enumerate}
\end{proposition}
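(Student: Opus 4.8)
The plan is to follow the classical Auslander-style argument in \cite[Proposition~II.2.1]{ARS}, adapting each step to the graded and ``commonly graded'' setting. The functor $e_M = \uHom_{\grm\mhyphen B}(M,-)$ is clearly additive and $\kk$-linear, so it carries direct sums to direct sums and direct summands to direct summands; this already reduces the verification of (2) to the single module $X = M$, for which $e_M(M) = \uEnd_{\grm\mhyphen B}(M) = E$ is the free rank-one graded right $E$-module. Combined with additivity and the fact that $\add_{\grm\mhyphen B}M$ consists of summands of finite direct sums of degree shifts $M[j]$, this gives that $e_M$ sends every object of $\add_{\grm\mhyphen B}M$ into $\grprm\mhyphen E$ (noting $e_M(M[j])$ is a shift of $E$), which is part~(2). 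Here one should record that $E = \uEnd_{\grm\mhyphen B}(M)$ is commonly graded: since $M$ is a finite graded module over the commonly graded algebra $B$, each graded piece $\uHom_{\grm\mhyphen B}(M,M[j])$ is finite-dimensional and vanishes for $j \ll 0$, so $E$ is bounded below and locally finite and the category $\grprm\mhyphen E$ and its notation make sense.

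For part~(1), the claim is the natural map
$$\uHom_{\grm\mhyphen B}(Y,Z) \longrightarrow \uHom_{\grm\mhyphen E}(e_M(Y), e_M(Z)),\qquad f \mapsto e_M(f) = (- \circ f)$$
is an isomorphism when $Y \in \add_{\grm\mhyphen B}M$ and $Z$ is an arbitrary finite graded right $B$-module. First I would check it when $Y = M$: in that case $e_M(M) = E$ is free of rank one, so $\uHom_{\grm\mhyphen E}(E, e_M(Z)) \cong e_M(Z) = \uHom_{\grm\mhyphen B}(M,Z)$ canonically, and one verifies the composite of this identification with $f \mapsto e_M(f)$ is the identity on $\uHom_{\grm\mhyphen B}(M,Z)$. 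By additivity and compatibility with degree shifts, the statement then holds for every finite direct sum $\bigoplus_i M[j_i]$ in place of $Y$. Finally, for a general $Y \in \add_{\grm\mhyphen B}M$ write $P = Y \oplus Y' \cong \bigoplus_i M[j_i]$; since both $\uHom_{\grm\mhyphen B}(-,Z)$ and $\uHom_{\grm\mhyphen E}(e_M(-), e_M(Z))$ are additive and the natural transformation between them respects the direct-sum decomposition, the isomorphism for $P$ restricts to an isomorphism on the $Y$-summand. This proves~(1).

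For part~(3), I would combine (1) and (2). Part~(2) shows $e_M$ restricts to a functor $\add_{\grm\mhyphen B}M \to \grprm\mhyphen E$, and part~(1) shows this restricted functor is fully faithful. It remains to prove essential surjectivity, i.e.\ that every finite graded projective right $E$-module $Q$ is isomorphic to $e_M(X)$ for some $X \in \add_{\grm\mhyphen B}M$. Since $E$ is commonly graded, $Q$ is a graded direct summand of a finite direct sum $F = \bigoplus_i E[j_i]$ of shifts of $E$; writing $F = Q \oplus Q'$ and noting $F = e_M(\bigoplus_i M[j_i])$, the idempotent in $\uEnd_{\grm\mhyphen E}(F)$ projecting onto $Q$ corresponds, via the already-established full faithfulness of $e_M$ on $\add_{\grm\mhyphen B}M$ applied to $\bigoplus_i M[j_i]$, to an idempotent endomorphism of $\bigoplus_i M[j_i]$ in $\grm\mhyphen B$; its image $X$ lies in $\add_{\grm\mhyphen B}M$ and satisfies $e_M(X) \cong Q$ because $e_M$ is additive and hence carries the splitting of the idempotent to a splitting. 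Therefore $e_M|_{\add_{\grm\mhyphen B}M}$ is a fully faithful, essentially surjective functor onto $\grprm\mhyphen E$, hence an equivalence.

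The only genuine subtlety, and the step I would be most careful about, is the bookkeeping needed to stay inside the commonly graded world: one must know that direct summands of a commonly graded module are again commonly graded, that idempotents in the relevant graded Hom-spaces split (this is where local finiteness and boundedness below are used, exactly as in the classical Krull--Schmidt setup), and that degree shifts are handled consistently so that the shifts $M[j]$ on the $B$-side match the shifts $E[j]$ on the $E$-side under $e_M$. None of this is hard, but it is exactly the place where the naive transcription of \cite{ARS} would need the replacement of ``finitely generated'' by ``commonly graded'' as flagged in the statement; once these formal points are in place, the proof is the standard one.
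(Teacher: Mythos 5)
Your proof is correct and is exactly the "straightforward adaptation of \cite{ARS}" that the paper invokes without writing out: reduce (1) and (2) to the case $Y=M$ via additivity and degree shifts, then get (3) by full faithfulness plus idempotent splitting. Nothing to add.
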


Now the results above yield, in our context of semisimple Hopf actions on 
AS regular algebras,  part of the McKay Correspondence pertaining to 
module categories over $H$, over $A\# H$, and over 
$\End_{A^{H}}(A)$. 

\begin{corollary} 
\label{xxcor2.6} 
Let $A$ be a noetherian AS regular algebra and $H$ be a semisimple Hopf 
algebra that acts on $A$ inner faithfully with trivial homological 
determinant so that  Conjecture {\rm{\ref{xxcon0.2}}} 
holds. Then there are natural bijections between isomorphism classes of 
\begin{enumerate}
\item[(a)]
indecomposable objects in $A\#H \mhyphen \grprm$,
\item[(b)]
indecomposable objects in $\uEnd_{A^{H}} (A) \mhyphen
\grprm$,
\item[(c)]
indecomposable right $A^{H}$-modules in $\add_{\grm\mhyphen A^H} A$.
\end{enumerate}
\end{corollary}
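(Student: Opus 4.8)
The plan is to chain together the equivalences already established, so that the three families of indecomposables are identified with a single fixed family. First I would apply Proposition~\ref{xxpro2.3}, equivalence \eqref{E2.3.1}, which gives $A\#H\mhyphen\Grprm\simeq H\mhyphen\Grm$; restricting to finite objects yields $A\#H\mhyphen\grprm\simeq H\mhyphen\grm$, and since $H$ is semisimple and finite dimensional, the indecomposable objects of $H\mhyphen\grm$ are exactly the degree shifts of simple $H$-modules. This pins down the ``master list'' of indecomposables in (a): up to isomorphism and degree shift they correspond to the simple $H$-modules. (Strictly, for the bijection of isomorphism classes one works up to degree shift throughout, or fixes generation in degree $0$ via \eqref{E2.3.2}; I would state the convention explicitly at the start.)

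Next I would connect (a) and (c). The hypotheses guarantee, via Conjecture~\ref{xxcon0.2}, a graded algebra isomorphism $A\#H\cong\End_{A^H}(A)=\uEnd_{\grm\mhyphen A^H}(A)$, where $A$ is viewed as a right $A^H$-module (as flagged after Lemma~\ref{xxlem1.6}). So (a) and (b) are identified: an isomorphism of graded algebras $A\#H\cong E$ induces an equivalence $A\#H\mhyphen\grprm\simeq E\mhyphen\grprm$, hence a bijection of indecomposables. For (b) versus (c), I would invoke Proposition~\ref{xxpro2.5}(3) with $B=A^H$ and $M=A$: it gives an equivalence $\add_{\grm\mhyphen A^H}A\simeq\grprm\mhyphen E$ where $E=\uEnd_{\grm\mhyphen A^H}(A)$. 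This is the equivalence for \emph{right} $E$-modules; to land in \emph{left} $\grprm$ as stated in (b), I would apply the graded version of the duality $\Hom_E(-,E):\grprm\mhyphen E\to E\mhyphen\grprm$ (the graded analogue of Lemma~\ref{xxlem1.6}), which preserves indecomposability and gives a bijection on isomorphism classes of indecomposables. Stringing these together: indecomposables of $\add_{\grm\mhyphen A^H}A$ $\leftrightarrow$ indecomposables of $\grprm\mhyphen E$ $\leftrightarrow$ indecomposables of $E\mhyphen\grprm$ $\cong$ indecomposables of $A\#H\mhyphen\grprm$.

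There are a few bookkeeping points I would be careful about rather than obstacles of substance. One is that $E=\End_{A^H}(A)$ is a priori only commonly graded (possibly $\mathbb Z$-graded, not $\mathbb N$-graded), so I must make sure Proposition~\ref{xxpro2.5} and Lemma~\ref{xxlem2.2} are being applied in exactly the commonly graded generality in which they are stated — which they are. A second is that ``indecomposable'' must be taken up to degree shift consistently across all three lists (an indecomposable graded projective over $A\#H$ is, by Lemma~\ref{xxlem2.2}(1) together with \eqref{E2.3.1}, a shift of $A\otimes S$ for $S$ a simple $H$-module), and the equivalences of categories all commute with the shift functor $[-1]$, so the induced bijections are compatible with it. A third is noetherianity: $A$ noetherian AS regular gives $A\#H$ noetherian (as $H$ is finite dimensional), which is what makes $\grprm$ the right finiteness-controlled category and ensures the projective covers in Lemma~\ref{xxlem2.2} exist and are finitely generated. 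Naturality of the bijections follows because every arrow above is a genuine equivalence (or duality) of categories, not merely a set-level matching.

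The main obstacle, such as it is, is purely the left/right switch: the conjectural isomorphism is phrased as $A\#H\cong\End_{A^H}(A)$ with $A$ a right $A^H$-module, while Proposition~\ref{xxpro2.5} produces $\grprm\mhyphen E$ on the right but (b) asks for left $E$-modules. The clean fix is to note that $\add_{\grm\mhyphen A^H}A\simeq\grprm\mhyphen E$ and then dualize via $\Hom_E(-,E)$ to get $E\mhyphen\grprm$; since a graded algebra isomorphism $A\#H\cong E$ gives $A\#H\mhyphen\grprm\simeq E\mhyphen\grprm$, the three bijections close up. I would also remark that combined with Proposition~\ref{xxpro2.3} this recovers the full four-way correspondence of Theorem~\ref{xxthmA}, with (d) (simple $H$-modules) as the common parametrizing set, which is the conceptual content worth stating at the end of the proof.
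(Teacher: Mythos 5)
Your proposal is correct and follows essentially the same route as the paper: Conjecture~\ref{xxcon0.2} identifies (a) with (b), Proposition~\ref{xxpro2.5}(3) with $B=A^H$, $M=A$ gives $\add_{\grm\mhyphen A^H}A\simeq\grprm\mhyphen\uEnd_{A^H}(A)$, and the left/right switch is handled by (the graded version of) Lemma~\ref{xxlem1.6}, exactly as in the paper. The extra bookkeeping remarks about degree shifts and the commonly graded setting are sensible but do not change the argument.
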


\begin{proof} 
Conjecture~\ref{xxcon0.2} gives a graded algebra isomorphism $A\#H 
\cong \uEnd_{A^{H}} (A)$, 
which induces a bijection between (a) and (b). Proposition \ref{xxpro2.5}(3) 
for $M=A$ and $ B = A^H$ induces an equivalence 
\[
\add_{\grm\mhyphen A^H}A~\simeq~\grprm\mhyphen\uEnd_{ A^{H}} (A).
\]
In particular, this equivalence gives a bijection between the indecomposable 
objects in the respective categories. Combining these facts with Lemma~\ref{xxlem1.6}  
gives a bijection between (b) and (c).
\end{proof}

\subsection{The Gabriel and McKay quivers}  
\label{xxsec2.3} 
Let us consider the following notation that will be used 
throughout this subsection.

\medskip

\noindent {\it Notation.} 
Let $H$ be a semisimple Hopf algebra that acts on a connected graded 
algebra $A$ so that $A$ is finitely generated in degree one by a 
left $H$-module $W$. Moreover, let  $\{V^{(0)}, V^{(1)}, \dots, V^{(s)}\}$ 
be a complete set of nonisomorphic simple left $H$-modules, where 
$V^{(0)}=H/(\ker \epsilon)$, and let $P^{(j)}$ be the left $A\# H$-module 
$A \otimes V^{(j)}$ as defined by \eqref{E2.2.1}.

\medskip

The result below holds by Proposition~\ref{xxpro2.3} and Lemma \ref{xxlem2.4}, 
using $H$ and the maximal graded ideal of $A$ in the roles of $\kk G$ 
and $\mathfrak m$, respectively, in  \cite[Section~1]{Au2}. See, also, 
\cite[Corollary~5.19]{LW}. 

\begin{lemma} 
\label{xxlem2.7}
Let $j$ be an integer between $0$ and $s$.
\begin{enumerate}
\item[(1)] 
The complete set of nonisomorphic, indecomposable, initial, projective 
left $A\# H$-modules is $\{P^{(0)}, P^{(1)}, \dots, P^{(s)}\}$. 
\item[(2)] 
The module $V^{(j)}$ is a simple left $A \#H$-module via the surjection \linebreak
$A \# H \to H\to V^{(j)}$, with projective cover $P^{(j)}$.
\item[(3)]
Suppose 
\begin{equation}
\label{E2.7.1}\tag{E2.7.1}
\cdots \to Q_n^{(j)}\to  \cdots \to Q_1^{(j)} \to Q_0^{(j)} 
\rightarrow V^{(j)} \rightarrow 0
\end{equation}
is a minimal projective resolution of the $A\#H $-module $V^{(j)}$, then the minimal 
projective resolution of $V^{(j)}$ over $A$ also has this form. 
\end{enumerate}
\end{lemma}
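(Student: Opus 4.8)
The plan is to prove the three parts in order, leveraging the category equivalences from Proposition~\ref{xxpro2.3} together with the structural results about minimal resolutions from Lemma~\ref{xxlem2.4}, using $H$ and $\fm$ in the roles played by $\kk G$ and the augmentation ideal in the classical treatment of \cite{Au2}. For part (1), I would start from the equivalence \eqref{E2.3.2}, namely $A\# H\mhyphen\Grprm_0 \simeq H\mhyphen\Mod$, and restrict it to finitely generated objects; under this equivalence the functor $\Psi\colon M \mapsto A\otimes M$ sends the simple $H$-modules $V^{(j)}$ to the modules $P^{(j)} = A\otimes V^{(j)}$. Since $\Psi$ is an equivalence it preserves and reflects indecomposability, so a complete set of nonisomorphic indecomposable initial projective $A\# H$-modules is exactly $\{P^{(0)},\dots,P^{(s)}\}$, the images of the complete set $\{V^{(0)},\dots,V^{(s)}\}$ of nonisomorphic simple left $H$-modules.

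For part (2), I would observe that the surjections $A\# H \to H$ (killing $\fm\# H$, i.e.\ $A_{\geq 1}\# H$) and $H \to V^{(j)}$ compose to make $V^{(j)}$ a left $A\# H$-module on which $A_{\geq 1}$ acts as zero and $H$ acts in the given way; this module is simple over $A\# H$ because any $A\# H$-submodule is in particular an $H$-submodule (as $A$ acts trivially) and $V^{(j)}$ is $H$-simple. To identify the projective cover, note $V^{(j)}/\fm V^{(j)} = V^{(j)}$, so by Lemma~\ref{xxlem2.2}(1) (or directly by Lemma~\ref{xxlem2.2}(2) applied to the module generated in degree $0$) the projective cover of $V^{(j)}$ is $A\otimes V^{(j)} = P^{(j)}$.

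For part (3), the point is that a minimal projective resolution of $V^{(j)}$ over $A\# H$ restricts to a minimal \emph{free} resolution of $V^{(j)}$ over $A$; this is precisely what Lemma~\ref{xxlem2.4} provides, since $V^{(j)}$ is a graded bounded-below $A\# H$-module. Thus if \eqref{E2.7.1} is a minimal projective resolution over $A\# H$, then by Lemma~\ref{xxlem2.4} each $Q_n^{(j)} = A\otimes (\text{something})$ viewed over $A$ is free, the differentials have entries in $\fm$, and the resolution is minimal over $A$; since the minimal free resolution of $V^{(j)}$ over $A$ is unique up to isomorphism, it has the same form. I would also remark that by Lemma~\ref{xxlem2.1}(1) applied with $M = N = \kk$, or more precisely the fact that $\Ext^i_{A\# H}(V^{(i)}, V^{(j)}) = \Ext^i_A(V^{(i)},V^{(j)})^H$, the Betti numbers over $A\# H$ are recovered $H$-equivariantly from those over $A$, which is consistent with the shape of the resolution.

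The main obstacle I anticipate is bookkeeping rather than any deep difficulty: one must be careful that ``initial'' (generated in degree $0$, concentrated in degrees $\geq 0$) is exactly the condition that lets us restrict the equivalence \eqref{E2.3.1} down to \eqref{E2.3.2} and thence to the finitely generated simple $H$-modules, and one must check that the identification of $V^{(j)}$ as an $A\# H$-module via $A\# H\to H\to V^{(j)}$ is compatible with the $A\# H$-module structure on $A\otimes V^{(j)}$ given by \eqref{E2.2.1} under the quotient $P^{(j)} = A\otimes V^{(j)} \to (A/\fm)\otimes V^{(j)} = V^{(j)}$. These are routine verifications, so the proof is essentially an assembly of Proposition~\ref{xxpro2.3}, Lemma~\ref{xxlem2.2}, and Lemma~\ref{xxlem2.4}.
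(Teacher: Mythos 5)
Your proposal is correct and takes essentially the same route as the paper, whose own proof simply declares parts (1) and (2) ``clear'' (as consequences of Proposition~\ref{xxpro2.3} and Lemma~\ref{xxlem2.2}) and derives part (3) from Lemma~\ref{xxlem2.4}. You have merely filled in the routine verifications that the paper omits, and your assembly of the equivalence \eqref{E2.3.2}, the projective cover description, and the minimal resolution comparison matches the intended argument.
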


\begin{proof} (1,2) Clear.

(3) This follows from Lemma \ref{xxlem2.4}.
\end{proof}

Now we define the quivers of interest.

\begin{definition} 
\label{xxdef2.8} 
The \emph{Gabriel quiver} ${\mathcal G}(H,A)$ of the $H$-action on $A$ is the directed graph with
vertices  $P^{(0)},P^{(1)},\dots, P^{(s)}$, and with
$m_{ij}$ arrows from $P^{(i)}$ to $P^{(j)}$, where $m_{ij}$ is the 
multiplicity of $P^{(i)}[-1]$ in $Q_1^{(j)}$ as defined in \eqref{E2.7.1}.
\end{definition}

\begin{definition} \cite[Definition 2.3]{PartI} 
\label{xxdef2.9}
Let $W$ be a finite dimensional left $H$-module. The 
{\it McKay quiver} $(W)\mathcal{M}$ has 
vertices $V^{(0)},V^{(1)},\dots, V^{(s)}$, and $n_{ij}$ arrows from 
$V^{(i)}$ to $V^{(j)}$ where 
$W \otimes V^{(j)} = \bigoplus_{i=0}^s (V^{(i)})^{\oplus n_{ij}}$. 
The set of values $n_{ij}$ are referred to as the {\it fusion rule coefficients}.
\end{definition}

The following result, our Theorem B, was proved by Auslander  \cite[Section~1]{Au2} 
when $A$ is a commutative polynomial ring or a formal power series 
ring and $H$ is a group algebra.

\begin{theorem} 
\label{xxthm2.10} 
The Gabriel quiver ${\mathcal G}(H,A)$ is 
isomorphic to the McKay quiver $(W){\mathcal M}$.
\end{theorem}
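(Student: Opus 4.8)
The plan is to show that the two quivers have the same vertices (already built into the setup, since both are indexed by the simple $H$-modules $V^{(j)}$, equivalently the $P^{(j)}=A\otimes V^{(j)}$) and the same arrow multiplicities, i.e.\ $m_{ij}=n_{ij}$ for all $i,j$. So everything reduces to identifying the first syzygy term $Q_1^{(j)}$ in a minimal projective $A\#H$-resolution of $V^{(j)}$ with the projective cover of $W\otimes V^{(j)}$, up to the degree shift $[-1]$.

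\textbf{Key steps.} First I would write down the start of a minimal projective resolution of $V^{(j)}$ as an $A\#H$-module. By Lemma~\ref{xxlem2.4} (and Lemma~\ref{xxlem2.7}(3)) this is simultaneously a minimal free resolution over $A$, and the projective cover is $Q_0^{(j)}=P^{(j)}=A\otimes V^{(j)}$ via the multiplication-type map \eqref{E2.2.1}. So the first syzygy is $\Omega:=\ker(A\otimes V^{(j)}\twoheadrightarrow V^{(j)})$, and $Q_1^{(j)}$ is the projective cover of $\Omega$ as an $A\#H$-module; by Lemma~\ref{xxlem2.2}(1) this projective cover is $A\otimes(\Omega/\fm\Omega)$. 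Thus $m_{ij}$ is exactly the multiplicity of $V^{(i)}$ as a summand of the $H$-module $(\Omega/\fm\Omega)[1]$ (the shift because $Q_1^{(j)}$ is compared with $P^{(i)}[-1]$). Second, I would compute $\Omega/\fm\Omega$ directly from the structure of $A$. Since $A$ is generated in degree one by $W$, there is a surjection $A_1\otimes A\twoheadrightarrow A_{\ge 1}$, equivalently the degree-one part of the augmentation kernel $A_{\ge 1}$ is $W$, and tensoring the exact sequence $0\to\Omega\to A\otimes V^{(j)}\to V^{(j)}\to 0$ with $\kk$ over $A$ (i.e.\ applying $-\otimes_A\kk$, equivalently $-/\fm-$) and using that $A\otimes V^{(j)}$ is $A$-free gives a long exact sequence computing $\Tor^A_*(\kk,V^{(j)})$; the relevant piece is $\Omega/\fm\Omega \cong \Tor_1^A(\kk, A\otimes V^{(j)})$-complement, but more concretely: $\Omega$ is generated in degree one and $\Omega_1 = \ker(A_1\otimes V^{(j)}\to 0)$ is not quite it — rather $\Omega_1 = A_1\otimes V^{(j)} = W\otimes V^{(j)}$ since the map $A\otimes V^{(j)}\to V^{(j)}$ kills everything in positive degree. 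So $\Omega$ is generated in degree one by $W\otimes V^{(j)}$, hence $\Omega/\fm\Omega \cong W\otimes V^{(j)}$ as graded $H$-modules, concentrated in degree one.

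\textbf{Conclusion of the argument.} Putting these together: $Q_1^{(j)} = A\otimes(\Omega/\fm\Omega) = (A\otimes (W\otimes V^{(j)}))[-1]$, so the multiplicity $m_{ij}$ of $P^{(i)}[-1]=(A\otimes V^{(i)})[-1]$ in $Q_1^{(j)}$ equals the multiplicity of $V^{(i)}$ as a direct summand of the semisimple $H$-module $W\otimes V^{(j)}$, which by Definition~\ref{xxdef2.9} is precisely $n_{ij}$. Since the vertex sets match under $P^{(j)}\leftrightarrow V^{(j)}$, this identity $m_{ij}=n_{ij}$ gives the asserted isomorphism of directed graphs ${\mathcal G}(H,A)\cong (W){\mathcal M}$.

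\textbf{Main obstacle.} The one point that needs care — and is where the argument could slip — is the claim that $\Omega/\fm\Omega$ is $H$-equivariantly isomorphic to $W\otimes V^{(j)}$ sitting in degree one, i.e.\ that $\Omega$ has no degree-one relations and is minimally generated exactly there. This is clear on the level of graded vector spaces by the degree count above (the minimal free resolution of $\kk$ over $A$ has $Q_1$ in degree one when $A$ is generated in degree one, per the remark after \eqref{E2.3.3}, and one tensors that resolution with $V^{(j)}$), but one must check the $H$-action is the diagonal one on $W\otimes V^{(j)}$ — this follows because the surjection $\gamma\colon A\otimes(\Omega/\fm\Omega)\to\Omega$ from Lemma~\ref{xxlem2.2}(1) is an $A\#H$-map and the inclusion $\Omega\hookrightarrow A\otimes V^{(j)}$ restricts in degree one to the diagonal $H$-module $W\otimes V^{(j)}\hookrightarrow A_1\otimes V^{(j)}$. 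Handling the degree shift $[-1]$ consistently between the definitions of the two quivers is the only other bookkeeping subtlety, and Lemma~\ref{xxlem2.7} together with the convention in Definition~\ref{xxdef2.8} pins it down.
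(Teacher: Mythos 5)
Your proof is correct and takes essentially the same route as the paper: both arguments reduce to the identification $Q_1^{(j)}\cong \bigl(A\otimes (W\otimes V^{(j)})\bigr)[-1]$ and then use semisimplicity of $H$ to decompose $W\otimes V^{(j)}$ into simples, matching $m_{ij}$ with the fusion coefficient $n_{ij}$. The only (immaterial) difference is that the paper gets this identification by tensoring the minimal $A\#H$-resolution of $\kk$ with $V^{(j)}$, whereas you compute the first syzygy $\Omega=A_{\geq 1}\otimes V^{(j)}$ directly and invoke Lemma~\ref{xxlem2.2}(1) for its projective cover.
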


\begin{proof}
By Proposition~\ref{xxpro2.3}, the number of vertices of 
${\mathcal G}(H,A)$ and of $(W)\mathcal{M}$ are the same; namely, 
$P^{(j)}$ corresponds to $V^{(j)}$ for $j=0, \dots, s$.
So it suffices to show that the values $m_{ij}$ in 
Definition~\ref{xxdef2.8} form the fusion rule coefficients of Definition~\ref{xxdef2.9}.

Write a minimal projective resolution of the 
trivial $A$-module $V^{(0)}=\kk$ as 
$$
\cdots \to A\otimes W_n\to \cdots \to A\otimes W_{2}
\to A\otimes W[-1]\to A\to \kk\to 0,
$$
which is also a minimal projective resolution of $\kk$ as a graded left
$A\#H$-module by Lemma~\ref{xxlem2.7}(3). 
Tensoring with $V^{(j)}$ from the right
yields a minimal projective resolution of the left $A\# H$-module 
$V^{(j)}$, 
$$\cdots \to 
\cdots \to (A\otimes W_{2})\otimes V^{(j)} \to (A\otimes W[-1])\otimes V^{(j)}
\to A\otimes V^{(j)}\to V^{(j)}\to 0.$$ 
Note that $A\otimes V^{(j)}=P^{(j)}$ and
$(A\otimes W[-1])\otimes V^{(j)}=Q_1^{(j)}$ as in Lemma~\ref{xxlem2.7}. We obtain,
by Definitions \ref{xxdef2.8} and \ref{xxdef2.9}, 
$$\begin{aligned}
{\textstyle \bigoplus_{i=0}^s } \left( P^{(i)}[-1]\right)^{\oplus m_{ij}}
&\cong Q_1^{(j)} \\
&=(A\otimes W[-1])\otimes V^{(j)} \cong A\otimes (W[-1]\otimes V^{(j)}) \\
&\cong A\otimes \left(\textstyle \bigoplus_{i=0}^s (V^{(i)})^{\oplus n_{ij}}\right)[-1]\\
&\cong {\textstyle \bigoplus_{i=0}^s } \left( P^{(i)}[-1]\right)^{\oplus n_{ij}}.
\end{aligned}
$$
Therefore, $m_{ij}=n_{ij}$ for all $i,j$, as desired. 
\end{proof}

%%%%%%%%%%%%%%%%%%%  SECTION 3
%%%%%%%%%%%%%%%%%%%%%%%%%%%%%%%%%%%%%
%%%%%%%%%%%%%%%%%%%%%%%%%%%%%%%%%%%%%%%%%%%%%%%%%%%%%%%%%%%%%%%%%%%%%%%%%%
%%%%%%%%%%%%%%%%%%%%%%%%%%%%%%%%%%%%%%%%%%%%%%%%%%%%%%%%%%%%%%%%%%%%%%%%%%%%%%%%%%%%%%%%%%%%%%%%%%%%%%%%%%%%%%%
%%%%%%%%%%%%%%%%%%%%%%%%%%%%%%%%%%%%%%%%%%%%%%%%%%%%%%%%%%%%%%%%%%%%%%%%%%%%%%%%%%%%%%%%%%%%%%%%%%%%%%%%%%%%%%%
%%%%%%%%%%%%%%%%%%%%%%%%%%%%%%%%%%%%%%%%%%%%%%%%%%%%%%%%%%%%%%%%%%%%%%%%%%%%%%%%%%%%%%%%%%%%%%%%%%%%%%%%%%%%%%%
%%%%%%%%%%%%%%%%%%%%%%%%%%%%%%%%%%%%%%%%%%%%%%%%%%%%%%%%%%%%%%%%%%%%%%%%%%%%%%%%%%%%%%%%%%%%%%%%%%%%%%%%%%%%%%%
%%%%%%%%%%%%%%%%%%%%%%%%%%%%%%%%%%%%%%%%%%%%%%%%%%%%%%%%%%%%%%%%%%%%%%%%%%%%%%%%%%%%%%%%%%%%%%%%%%%%%%%%%%%%%%%
%%%%%%%%%%%%%%%%%%%%%%%%%%%%%%%%%%%%%%%%%%%%%%%%%%%%%%%%%%%%%%%%%%%%%%%%%%%%%%%%%%%%%%%%%%%%%%%%%%%%%%%%%%%%%%%
\section{Local cohomology and the Cohen-Macaulay property}
\label{xxsec3}

In this section we recall some basic definitions related to local
cohomology and Cohen-Macaulay modules needed for the results in 
Sections~\ref{xxsec4} and~Section~\ref{xxsec5}. We refer
to \cite{AZ,Jo1, VdB, Ye} for details and some undefined
terminology.  We will also extend a number of results from the connected graded  to the commonly graded setting.

\begin{definition} \label{xxdef3.1} 
Let $A$ be a commonly graded algebra 
(Definition \ref{xxdef1.2}(4)), which is ${\mathbb Z}$-graded, but
may not be ${\mathbb N}$-graded. 
\begin{enumerate}
\item[(1)] \cite[Definition 1.7.3]{NV} The {\it graded Jacobson
radical}  of $A$ is
$$\fm_A:=\bigcap\{ L \mid {\text{$L$ maximal  graded left ideals of $A$}}\}.$$
\item[(2)] $A$ is called {\it graded semilocal} if $A/\mathfrak{m}_A$ is graded semisimple (\cite[Section~1.7]{NV}).
\item[(3)] For a graded left $A$-module $M$, the 
{\it graded $\Bbbk$-linear dual of $M$} is 
$$M^\ast:=\bigoplus_{i\in {\mathbb Z}} \Hom_{\Bbbk}(M_{-i}, \Bbbk).$$
It is easy to see that $M^\ast$ is a graded right $A$-module. 
\end{enumerate}
\end{definition}

We start with an easy lemma. 

\begin{lemma}
\label{xxlem3.2} 
Let $A$ be a commonly graded algebra. 
\begin{enumerate}
\item[(1)] 
If $L$ is a maximal graded left ideal of $A$, then 
$L\supseteq \bigoplus_{i> -b_{l}(A)} A_i$. As a 
consequence, $\fm_A \supseteq \bigoplus_{i> -b_{l}(A)} A_i$.
\item[(2)]
If $S=A/L$, where $L$ is a maximal graded left ideal of $A$,
then $b_{l}(S)\geq b_{l}(A)$ and $b_{u}(S)\leq -b_{l}(A)$.
As a consequence, $S$ is finite dimensional.
\item[(3)]
There are only finitely many isomorphism classes of simple
graded left $A$-modules, up to degree shifts.
As a consequence, $A$ is graded semilocal.
\item[(4)] \cite[Lemma 1.7.4(4)]{NV}
$$\fm_A=\bigcap \{l.ann(S)\mid {\text{$S$ graded simple left $A$-modules}}\}.$$
\item[(5)]  \cite[Lemma 1.7.4(7)]{NV}
$$\begin{aligned}
\fm_A&=\bigcap\{ R \mid {\text{$R$ maximal graded right ideals of $A$}}\}\\
&=\bigcap \{l.ann(S)\mid {\text{$S$ graded simple right $A$-modules}}\}.
\end{aligned}
$$
\item[(6)]
If $M$ is a graded left $A$-module, then there is a natural 
isomorphism
$$M^\ast \cong \uHom_A(M,A^\ast).$$
As a consequence, $A^\ast$ is a graded injective left 
{\rm{(}}or right{\rm{)}} $A$-module. 
\end{enumerate}
\end{lemma}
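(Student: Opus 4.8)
The plan is to establish part (1) directly --- it is the only step requiring a genuine argument --- and then to deduce (2) and (3) from it by bookkeeping, while (4) and (5) are quoted from \cite{NV} and (6) is a standard adjunction. For (1) I would fix a maximal graded left ideal $L$ and set $S := A/L$, a graded simple left $A$-module. Since $1 \in A_0$ and $1 \notin L$, its image $\bar 1$ is a nonzero element of $S_0$ that generates $S$ over $A$. Given any nonzero homogeneous $s \in S_d$, simplicity forces $As = S$, so $\bar 1 = as$ for some $a \in A$; comparing the degree-$0$ homogeneous components gives $a_{-d}\,s = \bar 1 \neq 0$, where $a_{-d} \in A_{-d}$ is the degree $-d$ component of $a$. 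Hence $A_{-d} \neq 0$, so $-d \geq b_{l}(A)$, i.e.\ $d \leq -b_{l}(A)$. Thus $S_d = A_d/L_d = 0$, equivalently $A_d \subseteq L$, for every $d > -b_{l}(A)$, and intersecting over all maximal graded left ideals yields $\fm_A \supseteq \bigoplus_{i > -b_{l}(A)} A_i$.

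Part (2) then follows: $b_{l}(S) \geq b_{l}(A)$ because $S$ is a graded quotient of $A$, while $b_{u}(S) \leq -b_{l}(A)$ is the vanishing just established; so $S$ is supported in the finite range $[\,b_{l}(A),\,-b_{l}(A)\,]$, and since $A$ is locally finite, $S$ is finite dimensional. For (3), part (1) gives $\fm_A \supseteq V := \bigoplus_{i > -b_{l}(A)} A_i$, so $A/\fm_A$ is a quotient of $A/V \cong \bigoplus_{b_{l}(A) \leq i \leq -b_{l}(A)} A_i$, which is finite dimensional by local finiteness. Hence $\bar A := A/\fm_A$ is a finite-dimensional $\mathbb{Z}$-graded algebra whose graded Jacobson radical is $\fm_A/\fm_A = 0$, so by the structure theory of finite-dimensional graded algebras (see \cite{NV}) it is graded semisimple --- which is precisely the assertion that $A$ is graded semilocal. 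Finally, every graded simple left $A$-module $S$ is annihilated by $\fm_A$ (the annihilator of any nonzero homogeneous element of $S$ is a maximal graded left ideal, hence contains $\fm_A$), so $S$ is a graded simple $\bar A$-module, and a finite-dimensional graded semisimple algebra has only finitely many graded simple modules up to degree shift.

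Parts (4) and (5) are the cited statements \cite[Lemma~1.7.4(4),(7)]{NV}, which hold for arbitrary $\mathbb{Z}$-graded rings (and in any case apply to $A$, which is graded semilocal by (3)), so no further work is needed. Part (6) is the graded Hom--tensor adjunction combined with $A \otimes_A M \cong M$: for a graded left $A$-module $M$,
\[
\begin{aligned}
M^\ast &= \uHom_\kk(M,\kk) \;\cong\; \uHom_\kk(A \otimes_A M,\, \kk) \\
       &\cong\; \uHom_A\big(M,\, \uHom_\kk(A,\kk)\big) \;=\; \uHom_A(M, A^\ast),
\end{aligned}
\]
naturally in $M$. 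As $\uHom_\kk(-,\kk)$ is exact on graded $\kk$-vector spaces (exactness is checked degree by degree, over a field), the functor $\uHom_A(-,A^\ast) \cong (-)^\ast$ is exact, so $A^\ast$ is a graded injective left $A$-module; the mirror computation gives the same statement on the right.

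The single non-formal point is part (1): the idea is that simplicity lets one transport the canonical generator $\bar 1$ from degree $d$ back to degree $0$, which forces $A_{-d} \neq 0$ and so caps the support of every graded simple quotient of $A$ at $-b_{l}(A)$. I expect this to be the main --- indeed the only --- obstacle; once it is in place, (2) and (3) are bookkeeping and (4)--(6) are a citation and a standard adjunction.
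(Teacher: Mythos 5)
Your proposal is correct and follows essentially the same route as the paper: part (1) is the only substantive step, (2)--(3) are the same bookkeeping, (4)--(5) are citations to \cite{NV}, and (6) is the identical adjunction $\uHom_\kk(A\otimes_A M,\kk)\cong\uHom_A(M,A^\ast)$ plus exactness of $(-)^\ast$. The only cosmetic difference is in (1), where you argue in the quotient $S=A/L$ (a nonzero homogeneous $s\in S_d$ must map back onto $\bar 1\in S_0$, forcing $A_{-d}\neq 0$) while the paper argues directly in $A$ (for $\deg x>-b_l(A)$ one has $(Ax+L)_0=L_0$, so $Ax+L\neq A$ and maximality gives $x\in L$); these are the same idea in contrapositive form.
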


\begin{proof} (1) Let $x\in \bigoplus_{i> -b_{l}(A)} A_i$. 
Then $ b_{l}(Ax)\geq \deg x+ b_{l}(A)>0$ and $(Ax+L)_0=L_0$. Then 
$Ax+L\neq A$. Since $L$ is maximal, $Ax+L=L$ or $Ax\subseteq L$.
Thus $x\in L$ and consequently, $\bigoplus_{i> -b_{l}(A)} A_i
\subseteq L$.

(2) This follows from part (1) and the fact that $S=A/L$
for some maximal graded left ideal $L$.

(3) 
Let $S$ be a graded simple left $A$-module. Then 
$\fm_A S=0$. Thus $S$ is a factor module of $A/\fm_A$
up to a shift. Since $A/\fm_A$ is finite
dimensional by part (1), there are only finitely
many isomorphism classes of graded simple left
$A$-modules, up to a shift.

(4,5) These are properties of the graded Jacobson radical.

(6) For every graded left $A$-module $M$, one has 
\[\begin{array}{rll}
M^\ast &=\uHom_{\Bbbk}(M,\Bbbk) &\cong
\uHom_{\Bbbk}(A\otimes_A M, \Bbbk)\\
&\cong \uHom_{A}(M,\uHom_{\Bbbk}(A,\Bbbk))
&=\uHom_A(M, A^\ast);
\end{array}\]
see \cite[Section~1.2]{NV}.
Since the functor $(-)^\ast$ is exact, $A^\ast$
is an injective graded left $A$-module.
\end{proof}

Part (2) of the following is a graded version of
Nakayama's lemma (see, also, \cite[Lemma 1.7.5]{NV}).

\begin{proposition}
\label{xxpro3.3} 
Let $A$ be a commonly graded algebra. 
\begin{enumerate}
\item[(1)] 
Suppose $A$ is left noetherian.
For each $i>0$, there are $j$ and $k$ such that
$$A_{\geq k}\subseteq (\fm_A)^j \subseteq A_{\geq i}.$$
As a consequence, the two sequences 
$\{\fm_A^n\}_{n}$ and $\{A_{\geq n}\}_n$
are cofinal. 
\item[(2)]
Let $M$ be a bounded below left $A$-module.
If $\fm_A M=M$, then $M=0$.
\end{enumerate}
\end{proposition}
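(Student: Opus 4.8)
The plan is to prove the two parts in sequence, with part (1) the technical heart and part (2) an easy consequence via cofinality.

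For part (1), the strategy is to leverage the cofinality of the filtrations $\{\fm_A^n\}_n$ and $\{A_{\ge n}\}_n$ from the two opposite directions. One inclusion is almost immediate: by Lemma~\ref{xxlem3.2}(1), $\fm_A \supseteq \bigoplus_{i > -b_l(A)} A_i = A_{\ge 1-b_l(A)}$, so setting $b := b_l(A)$ we get $\fm_A^j \supseteq A_{\ge j(1-b)}$ (using that $A_{\ge p}A_{\ge q} \subseteq A_{\ge p+q}$), hence for any target $i$ we may choose $k = j(1-b)$ with $j$ large enough that $j(1-b) \ge k$ trivially, and more importantly $A_{\ge k} \subseteq \fm_A^j$. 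The genuinely substantive inclusion is $\fm_A^j \subseteq A_{\ge i}$ for suitable $j$, i.e. that high powers of $\fm_A$ eventually land in high degrees; this is where left noetherianness is essential. The plan here is to first observe that $\fm_A^2 \subsetneq \fm_A$ cannot hold merely degreewise, but rather to argue: since $A$ is left noetherian, $\fm_A$ is a finitely generated left ideal, say generated by homogeneous elements $x_1, \dots, x_r$ of positive degrees $d_1, \dots, d_r$ (positivity of the degrees follows because by Lemma~\ref{xxlem3.2}(1) all of $\fm_A$ beyond degree $-b$ is captured, and one can choose generators; the elements of $\fm_A$ in degrees $\le -b$ form a finite-dimensional space that is nilpotent-in-an-appropriate-sense). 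Then $\fm_A^j$ is spanned by products of $j$ generators, each contributing degree $\ge d_{\min} := \min_i d_i \ge 1$ minus the slack from low-degree coefficients; one needs to track that $\fm_A^j \subseteq A_{\ge j d_{\min} + b_l(A)}$ roughly, so that $j d_{\min} + b_l(A) \ge i$ for $j$ large gives the claim. The care needed is that elements of $\fm_A$ are not themselves of positive degree (they include things in degrees down to $b_l(A)$), so one cannot naively say "a product of $j$ of them has degree $\ge j$"; the resolution is to use that $A_{\ge k} \subseteq \fm_A$ already (first inclusion), so $\fm_A = A_{\ge k} + (\text{finite-dim'l piece in degrees } b_l(A) \text{ to } k-1)$, and powers of this eventually absorb into $A_{\ge i}$.

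I expect the main obstacle to be exactly this bookkeeping in the substantive inclusion $\fm_A^j \subseteq A_{\ge i}$: making rigorous that, despite $\fm_A$ having components in negative degrees (when $b_l(A) < 0$), sufficiently high powers are concentrated in arbitrarily high degrees. The clean way around it is: write $\fm_A = N \oplus A_{\ge k}$ as graded vector spaces where $N$ is the finite-dimensional span in degrees $[b_l(A), k-1]$; then expand $\fm_A^j$ and note any term involving at least one factor from $A_{\ge k}$ lies in $A_{\ge k + (j-1)b_l(A)}$, while the purely-$N$ terms $N^j$ vanish once $j$ exceeds $\dim_\kk(A/\fm_A$ in the relevant degree range$)$ — more precisely, one uses that $N$, modulo $A_{\ge k}$, generates a finite-dimensional nilpotent subquotient, or simply that the grading forces $N^j \subseteq A_{\ge j b_l(A)}$ and for $b_l(A) \ge 1$ this is immediate while for $b_l(A) \le 0$ one instead notes $N \subseteq \fm_A$ together with noetherianness forces stabilization. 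Once part (1) is in hand, cofinality of $\{\fm_A^n\}$ and $\{A_{\ge n}\}$ is formal.

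For part (2), the argument is short: let $M$ be bounded below with $\fm_A M = M$, and suppose $M \ne 0$; let $b = b_l(M) > -\infty$. Pick $i$ with $-b_l(A) - b < i$, equivalently with $\fm_A M$ concentrated, for its degree-$b$ component, forced to vanish. Indeed $M = \fm_A M = \fm_A^j M$ for every $j$ by iteration, so $M_b = (\fm_A^j M)_b$. But by Lemma~\ref{xxlem3.2}(1), $\fm_A \subseteq A_{\ge 1 - b_l(A)}$, hence $\fm_A^j \subseteq A_{\ge j(1-b_l(A))}$, and so $(\fm_A^j M)_b \subseteq \sum_{p + q = b,\, p \ge j(1 - b_l(A))} A_p M_q$; for $j$ large enough that $j(1 - b_l(A)) > b - b_l(M)$, every such $q = b - p < b_l(M)$, so $M_q = 0$ and $M_b = 0$, contradicting $b = b_l(M)$. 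Hence $M = 0$. (Alternatively, invoke part (1) directly: $M = \fm_A^j M \subseteq A_{\ge i} M \subseteq M_{\ge i + b_l(M)}$ for arbitrarily large $i$, forcing $M = 0$.)
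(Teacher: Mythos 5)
Your proposal has the two inclusions of part (1) backwards, and this leads to concrete errors. The inclusion you call ``almost immediate,'' namely $A_{\geq k}\subseteq \fm_A^j$, does not follow from $\fm_A\supseteq A_{\geq 1-b_l(A)}$: that only gives $\fm_A^j\supseteq (A_{\geq 1-b_l(A)})^j$, and the fact $A_{\geq p}A_{\geq q}\subseteq A_{\geq p+q}$ bounds such products from \emph{above}, so it cannot yield $\fm_A^j\supseteq A_{\geq j(1-b_l(A))}$ --- high-degree elements need not be products of lower-degree ones. In fact this is precisely the inclusion that needs left noetherianness and it fails without it: take $A=\kk\oplus V$ with $V=\bigoplus_{i\geq 1}\kk x_i$, $\deg x_i=i$, $V^2=0$; then $\fm_A=V$, so $\fm_A^2=0$ while $A_{\geq k}\neq 0$ for every $k$. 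The paper's route is that noetherianness makes $A/\fm_A^j$ finite dimensional (each $\fm_A^n/\fm_A^{n+1}$ is a finite module over the finite-dimensional algebra $A/\fm_A$), hence bounded above, which is exactly $A_{\geq k}\subseteq\fm_A^j$. Conversely, the inclusion $\fm_A^j\subseteq A_{\geq i}$, which you treat as the one requiring noetherianness, holds for any commonly graded algebra; and your bookkeeping for it does not close up: a product with one factor in $A_{\geq 1-b_l(A)}$ and $j-1$ factors of degree as low as $b_l(A)$ has degree only $\geq 1+(j-2)b_l(A)$, which tends to $-\infty$ when $b_l(A)<0$, and the case $b_l(A)\geq 1$ you fall back on is vacuous since $1\in A_0$ forces $b_l(A)\leq 0$. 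The paper avoids all of this by passing to the two-sided ideal $I=A(A_{\geq 1-2b_l(A)})A\subseteq A_{\geq 1}$: the quotient $B=A/I$ is finite dimensional, so $\fm_B^N=0$, whence $\fm_A^N\subseteq I\subseteq A_{\geq 1}$ and then $\fm_A^{iN}\subseteq A_{\geq i}$.

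For part (2), your main argument rests on ``$\fm_A\subseteq A_{\geq 1-b_l(A)}$,'' which reverses Lemma 3.2(1) and is false in general (e.g.\ $A=\kk[\epsilon]/(\epsilon^2)$ concentrated in degree $0$ has $\fm_A=\kk\epsilon\not\subseteq A_{\geq 1}$). Your parenthetical alternative is the right idea and is what the paper does, but since part (2) does not assume $A$ noetherian you must observe that the half of part (1) being invoked, $\fm_A^n\subseteq A_{\geq 1}$ for large $n$, holds without that hypothesis; the inequality $b_l(M)=b_l(\fm_A^nM)\geq 1+b_l(M)$ then gives the contradiction.
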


\begin{proof}
(1) Let $I$ be the ideal $A (A_{\geq 1-2b_{l}(A)})A$ 
which is a subspace of $A_{\geq 1}$. Let $B=A/I$. Then 
$B$ is a finite dimensional graded algebra, and 
$(\fm_B)^N=0$ for some integer $N>0$. This means
that $(\fm_A)^N\subseteq I\subseteq A_{\geq 1}$. 
Therefore $(\fm_A)^{iN}\subseteq A_{\geq i}$ for all
$i$, and so we can take $j=iN$.

Now we assume that $A$ is left noetherian. Then 
for each $j$, $A/(\fm_A^j)$ is finite dimensional.
This means that $A_{\geq k}\subseteq \fm_A^j$ 
for some $k$. 

(2) By the proof of part (1), even without the left noetherian
hypothesis, one has $\fm_A^j\subseteq A_{\geq i}$.

Suppose that $M\neq 0$ and that $\fm_A M=M$.
Then $\fm_A^n M=M$ for all $n\gg 0$. By part (1), 
we can choose $n$ such that $\fm_A^n\subseteq A_{\geq 1}$.
Then 
$$b_{l}(M)=b_{l}(\fm_A^n M)\geq b_{l}(\fm_A^n) +b_{l}(M)
\geq 1+ b_{l}(M)>b_{l}(M),$$ 
yielding a contradiction. Therefore $M=0$. 
\end{proof}

Let $A$ be a left noetherian commonly graded algebra and 
$\fm_A$ (or $\fm$ if no confusion occurs) be the graded
Jacobson radical of $A$. For each
graded left $A$-module $M$, the $\fm_A$-torsion submodule of $M$ is
defined to be 
\smallskip
$$\Gamma_{\fm_A}(M):=\{x\in M\mid A_{\geq n}x=0 \quad
{\text{for some $n\geq 1$}}\}=\lim_{n\to \infty} \uHom_A(A/(\fm_A)^n,M).$$ 

\smallskip

\noindent The functor $\Gamma_{\fm_A}$ is a left
exact functor from $A\mhyphen \Grm$ to itself. 

We now extend some definitions to the commonly graded case.

\begin{definition}
\label{xxdef3.4}
The $i$-th right derived functor $R^i\Gamma_{\fm_A}(-)$ is called 
the {\it $i$-th local cohomology}, and $R^i\Gamma_{\fm_A}(M)$ is 
called the {\it $i$-th local cohomology module of $M$}. 
\end{definition}

\begin{definition} 
\label{xxdef3.5} 
Let $A$ be a noetherian commonly graded 
algebra, and let $M$ be a finite left $A$-module. 
\begin{enumerate} 
\item[(1)] 
\cite{AZ}
The {\it cohomological dimension of $M$} and {\it of $A$} are given as, respectively,  
\begin{eqnarray*}
\cd(A)&:=&\max\{i\mid R^i\Gamma_{\fm_A}(N)\neq 0 \quad 
{\text{for some $N\in A\mhyphen \Grm$}}\}\\
\cd(M)&:=&\max\{i\mid R^i\Gamma_{\fm_A}(M)\neq 0\}.
\end{eqnarray*}
\item[(2)] 
\cite[Proposition 4.3]{Jo1}
The {\it depth of $M$} is defined to be 
\begin{eqnarray*}
\dep(M)&:=&\min\{i\mid R^i\Gamma_{\fm_A}(M)\neq 0\}.
\end{eqnarray*}
\item[(3)] 
$M$ is called \emph{Cohen-Macaulay} (CM, or $d$-CM) 
if $d:=\cd(M) < \infty$ and $\dep(M)=\cd(M)$.

\smallskip

\item[(4)] 
 $M$ is  called \emph{Maximal Cohen-Macaulay} 
(MCM) if $\cd(A)<\infty$ and $M$ is $\cd(A)$-CM. 

\smallskip

\item[(5)] 
$A$ is  called \emph{Cohen-Macaulay} if it 
is MCM as a left and right $A$-module.
\end{enumerate} 
\end{definition}

Note that when $A$ is commutative, noetherian, and connected
graded (or local), the definition of CM above is equivalent to the
standard definition of CM using the $\Ext$-group or using the maximal
length of regular sequences \cite[Chapter 2]{BH}. 

 \medbreak We recall a few more definitions.

\begin{definition}
\cite[Definition 3.2]{AZ} 
\label{xxdef3.6} 
Let $A$ be a noetherian commonly graded 
algebra. We say $A$ satisfies the \emph{left $\chi$-condition}, 
if $\uExt^i_A(S, M)$ is finite dimensional for every simple left
graded $A$-module $S$, every noetherian graded left $A$-module 
$M$, and every $i\geq 0$. 
The \emph{right $\chi$-condition} is defined similarly. If 
both left and right $\chi$ hold, then we say $A$ satisfies $\chi$. 
\end{definition}

It is not difficult to check that this definition is equivalent to the
original definition of $\chi$ given in \cite[Definition 3.2]{AZ}.

The following definition is due to Yekutieli \cite{Ye} and Van
den Bergh \cite{VdB}. Let $\D^b(A\mhyphen \Mod)$ denote the bounded
derived category of left $A$-modules. Let $A^{\op}$ denote the opposite
ring of $A$, and let $A^{\e}$ denote the enveloping algebra $A\otimes A^{\op}$.

\begin{definition} 
\label{xxdef3.7} 
Let $A$ be a noetherian algebra. 
\begin{enumerate} 
\item[(1)] \cite{Ye} 
A complex $D\in \D^b(A^{\e}\mhyphen \Mod)$ is called a 
\emph{dualizing complex over $A$} if it satisfies the following 
conditions: 
\begin{enumerate} 
\item[(i)] 
$D$ has finite injective dimension over $A$ and over $A^{\op}$;
\item[(ii)] 
for every $i$, the $i$-th cohomology $H^i(D)$ is finite over $A$
and over $A^{\op}$, respectively; 
\item[(iii)] 
the canonical maps $A \to \RHom_{A}(D,D)$ and $A \to 
\RHom_{A^{\op}}(D,D)$ are isomorphisms in $\D(A^{\e}\mhyphen \Mod)$. 
\end{enumerate} 
\item[(2)] 
\cite{Ye}
Assume that $A$ is commonly graded. 
Then a graded dualizing complex $D\in \D^b(A^{\e}\mhyphen \Grm)$ is 
called \emph{balanced} if
$$ R\Gamma_{\fm_A}(D)^*~\cong ~R\Gamma_{\fm_{A^{\op}}}(D)^*~\cong ~A$$ 
as $A$-bimodules. 
\item[(3)]
\cite{VdB} 
A dualizing complex $D$ over
$A$ is called \emph{rigid} if there is an isomorphism 
$$
D ~\cong ~ \RHom_{A^{\e}}(A,D\otimes D^{\op})
$$
in $\D(A^{\e}\mhyphen \Mod)$. Here
the left $A^{\e}$-module structure of $D\otimes D^{\op}$ comes from the
left $A$-module structure of $D$ and the left $A^{\op}$-module structure
of $D^{\op}$.
\end{enumerate} 
\end{definition}

Now we have the following existence theorems;  the original versions 
of some of these results were given for connected graded algebras
\cite{VdB}. Some of these results were extended to the semilocal case in
\cite{WZ1}. Since commonly graded algebras are graded semilocal by Lemma~\ref{xxlem3.2}(3),
one can verify the  result below following ideas in \cite{VdB, WZ1}.

\begin{theorem} 
\label{xxthm3.8} 
Let $A$ be a noetherian commonly graded algebra. 
\begin{enumerate} 
\item[(1)] 
\cite[Corollary~4.14]{Ye} 
If $A$ is AS regular, then $A$ has a balanced dualizing 
complex.
\item[(2)] 
\cite[Theorem 6.3]{VdB} 
In general, $A$ has a balanced dualizing complex if and only if
$\cd(A)$ and $\cd(A^{\op})$ are finite, and 
$A$  and $A^{\op}$ satisfy $\chi$. 
\item[(3)] 
\cite[Theorem 6.3]{VdB} 
If $A$ has a balanced dualizing complex $D$, then 
$D\cong R\Gamma_{\fm}(A)^*$. 
\item[(4)]
\cite[Proposition 8.2(2)]{VdB} 
If $A$ has a balanced dualizing complex $D$, then it is a 
rigid dualizing complex.
\item[(5)]
\cite[Proposition 8.2(1)]{VdB} 
If they exist, then the rigid dualizing complex and 
the balanced dualizing complex over $A$ are unique up to 
isomorphism.
\item[(6)] 
\cite[Theorem 5.1]{VdB} 
Let $D$ be the balanced dualizing complex over $A$, if it exists. Then 
$$
R\Gamma_{\fm}(M)^{\ast}~=~\RHom_A(M,D)
$$
for any $M\in\D(A\mhyphen \Grm)$. 
\item[(7)]
\cite[Theorem 4.2(3,4)]{YZ1} 
Let $D$ be the balanced dualizing complex over $A$, if it exists. Then
$$
\quad \quad \cd(A)=\cd(A^{\op})=-\min\{i\mid H^i(D)\neq 0\} =\max\{i \mid
R^i\Gamma_{\fm}(A)\neq 0\}.
$$ 
\end{enumerate} 
\vspace{-.22in}

\qed
\end{theorem}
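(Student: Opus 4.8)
The plan is to reduce each of the seven assertions to the connected graded, or graded semilocal, case already treated in the cited references, using two structural facts about a commonly graded algebra $A$: it is graded semilocal with $\dim_{\kk}(A/\fm_A)<\infty$ (Lemma~\ref{xxlem3.2}(3) and its proof), and, when $A$ is noetherian, the chains $\{\fm_A^n\}_n$ and $\{A_{\geq n}\}_n$ are cofinal (Proposition~\ref{xxpro3.3}(1)). The second fact is the crux: it shows that $\Gamma_{\fm_A}(-)$ coincides with the torsion functor attached to the ordinary ideal $A(A_{\geq 1-2b_{l}(A)})A\subseteq A_{\geq 1}$, so the entire Artin--Zhang and Van den Bergh apparatus---local cohomology, the $\chi$-condition, the torsion/quotient category decomposition, and the local duality spectral sequences---carries over once ``degree $\geq n$'' is everywhere read as ``lies in $\fm_A^n$''. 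One also records that the graded Matlis dual $(-)^{\ast}$ of Definition~\ref{xxdef3.1}(3) is an exact contravariant functor (Lemma~\ref{xxlem3.2}(6)) interchanging bounded-below and bounded-above locally finite modules and negating internal degrees, which is what converts statements about $R\Gamma_{\fm}$ into statements about $\RHom$ and the cohomologies of $D$.

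Granting this dictionary, I would argue as follows. For (1), if $A$ is AS regular it has finite global and injective dimension, so $D:=R\Gamma_{\fm_A}(A)^{\ast}$ has finite injective dimension on both sides and finite cohomology modules, and graded local duality promotes it to a balanced dualizing complex, as in \cite[Corollary~4.14]{Ye} and its semilocal refinement in \cite{WZ1}. For (2), one direction uses (6): a balanced $D$ yields $R\Gamma_{\fm}(M)^{\ast}\cong\RHom_A(M,D)$, so $\uExt^i_A(S,M)$ is finite for $S$ simple (giving $\chi$), while the injective dimension of $D$ bounds $\cd(A)$; conversely, given finite $\cd$ and two-sided $\chi$, the cohomological estimates of \cite[Theorem~6.3]{VdB} show $R\Gamma_{\fm}(A)^{\ast}$ is a dualizing complex, with $\chi$ forcing its cohomologies to be finite and finiteness of $\cd$ forcing it to be bounded. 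Items (3)--(6) then follow formally from (2) together with uniqueness of balanced and rigid dualizing complexes: (3) identifies the balanced complex with $R\Gamma_{\fm}(A)^{\ast}$; (4) and (5) are the rigidity and uniqueness arguments of \cite[Proposition~8.2]{VdB}, which are purely formal once a dualizing complex exists; and (6) is local duality \cite[Theorem~5.1]{VdB}, whose proof uses only that $\Gamma_{\fm}$ has finite cohomological dimension and is represented by $D$. Finally, (7) drops out of (3): since $H^{-i}(D)\cong(R^i\Gamma_{\fm}(A))^{\ast}$, one obtains $\cd(A)=-\min\{i\mid H^i(D)\neq 0\}=\max\{i\mid R^i\Gamma_{\fm}(A)\neq 0\}$, and the same computation over $A^{\op}$ with the bimodule uniqueness of $D$ gives $\cd(A)=\cd(A^{\op})$, as in \cite[Theorem~4.2]{YZ1}.

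The main obstacle is not any single computation but the bookkeeping required to confirm that this machinery is insensitive to the passage from $\mathbb{N}$-graded to merely $\mathbb{Z}$-graded, bounded-below, locally finite modules: one must check that noetherianity descends to the relevant subquotients, that the $\chi$-condition in the form of Definition~\ref{xxdef3.6} agrees with the original \cite[Definition~3.2]{AZ}, that $\Gamma_{\fm_A}$ is left exact of finite cohomological dimension, and that the equivalence between the stable category and the quotient category $\QGr A$ survives---after which the duality isomorphisms and spectral sequences of \cite{VdB, WZ1, YZ1} still converge. All of this reduces to the two finiteness statements above, so the verification is long but routine; the one point needing genuine care is that the Ext-finiteness in Definition~\ref{xxdef3.6} must be tested against all simple graded modules and their degree shifts, of which there are only finitely many by Lemma~\ref{xxlem3.2}(3).
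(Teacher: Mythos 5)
Your proposal is correct and follows essentially the same route as the paper, which gives no detailed proof but simply observes that commonly graded algebras are graded semilocal (Lemma~\ref{xxlem3.2}(3)) and that the chains $\{\fm_A^n\}$ and $\{A_{\geq n}\}$ are cofinal (Proposition~\ref{xxpro3.3}(1)), so the results of \cite{Ye, VdB, YZ1} and their semilocal extensions in \cite{WZ1} carry over verbatim. Your item-by-item reductions and the identification of $\Gamma_{\fm_A}$ with the torsion functor of an ordinary graded ideal are exactly the intended verification.
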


Next we introduce another analogue of the Gorenstein and regularity
properties (similar to \cite[Definition 3.3]{RRZ}) by employing the 
dualizing complexes above.

\begin{definition} 
\label{xxdef3.9}
Let $A$ be a noetherian commonly graded algebra.
\begin{enumerate}
\item[(1)] 
We say $A$ is {\em{generalized AS Gorenstein}} if
\begin{enumerate}
\item[(i)]
$A$ has finite injective dimension $d$,
\item[(ii)]
$A$ has a balanced dualizing complex, and
\item[(iii)] 
$R^{i} \Gamma_{\fm} (A)^{\ast} \cong \begin{cases} \Omega & i=d\\
0& i\neq d \end{cases}$, \; where $\Omega$ is an invertible graded
$A$-bimodule. 
\end{enumerate}
\item[(2)] 
We say $A$ is {\em{generalized AS regular}} if $A$ is
generalized AS Gorenstein of finite global dimension $d$.
\end{enumerate}
\end{definition}

\begin{remark}
\label{xxrem3.10}
\begin{enumerate}
\item[(1)] The original definition of AS regularity 
(Definition~\ref{xxdef1.1}) requires that
$A$ is connected graded. 
There is a condition missing in the definition 
given in \cite[Definition 3.3]{RRZ}, which is ``$R^i\Gamma_{\fm}(A)=0$ 
for all $i\neq d$''.  
In \cite[Definition 3.3]{RRZ}, one considers 
${\mathbb N}$-graded (but not necessarily connected) algebras.
Here we are considering algebras that are not necessarily ${\mathbb N}$-graded.
\smallbreak 

\item[(2)] If $A$ is  generalized AS Gorenstein, 
by Theorem \ref{xxthm3.8}(2-6) the rigid
dualizing complex over $A$ is $\Omega [d]$.
\smallbreak 

\item[(3)] In general, $\Omega$ is not isomorphic to $A(-\ell)$ as a graded left 
$A$-modules. For example, let $A=B\oplus C$, where $B$ and $C$ are AS regular 
in the sense of Definition \ref{xxdef1.1} with the same global
dimension but different AS indices. Then $\Omega=B(-\ell_B)\oplus C(-\ell_C)$,
which cannot be isomorphic to a shift of $A$ as a graded left $A$-module.
\end{enumerate}
\end{remark}

We end this section by providing preliminary lemmas about depth, 
CM modules and  MCM modules that are needed in the rest of the paper. 
We also  compare the notion of regularity above with AS
regularity as in Definition~\ref{xxdef1.1}. Note that the global dimension of 
the smash product algebra $A\# H$ is equal to the global dimension of 
$A$ when $H$ is semisimple.

\begin{lemma} 
\label{xxlem3.11}
Let $A$ be a noetherian  commonly graded algebra 
and let $M$ be a finite left $A$-module. Let $B$ be a graded subalgebra of 
$A$ such that $_{B} A$ and $A_{B}$ are finite  $B$-modules. Take 
${}_B M$ to be the left $B$-module corresponding to $M$.
\begin{enumerate}
\item[(1)] {\cite[Theorem 8.3]{AZ}} 
We have that $R^{i} \Gamma_{\fm_{B}} ({}_B M) =R^{i} \Gamma_{\fm_{A}} (M)$. 
As a consequence,  $\cd (A) \leq \cd (B)$.
\item[(2)] 
The module $_{B} M$ is {\rm CM} if and only if $_{A} M$
is. If $\cd (A) =\cd (B) < \infty$, then $_{B} M$ is {\rm MCM} if 
and only if $_{A} M$ is {\rm MCM}.
\item[(3)] 
Suppose $M=M_{1} \oplus M_{2}$ where $M_{i} \neq 0$. Then $M$ is {\rm CM} 
\textnormal{(}respectively, {\rm MCM}\textnormal{)} if and only if both $M_{1}$ and $M_{2}$ are 
{\rm CM} \textnormal{(}respectively, {\rm MCM}\textnormal{)} of the same depth. 
\item[(4)] 
If $A$ is connected graded AS regular, then $\cd (A) =
\injdim(A)$ and $A$ is {\rm MCM}.
\item[(5)] If $A$ is generalized AS regular, then $\cd (A) =
\injdim(A)$ and $A$ is {\rm MCM}.
\end{enumerate}
\end{lemma}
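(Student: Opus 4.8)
\textbf{Proof plan for Lemma~\ref{xxlem3.11}.}
The plan is to treat the five parts in the order they are listed, since each relies on the previous ones. Part~(1) I would simply cite: the equality $R^i\Gamma_{\fm_B}({}_BM)=R^i\Gamma_{\fm_A}(M)$ for every finite left $A$-module follows from \cite[Theorem~8.3]{AZ} once one checks that the derived torsion functors for $\fm_A$ and $\fm_B$ agree on the restriction, which holds because the two filtrations $\{\fm_A^n\}$ and $\{\fm_B^n A\}$ are cofinal (using Proposition~\ref{xxpro3.3}(1) on both $A$ and $B$, together with the fact that $_BA$ and $A_B$ are finite). Taking $M$ over all of $A\mhyphen\grm$ and using that each such $M$ is also in $B\mhyphen\grm$ gives $\cd(A)\le\cd(B)$.

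For part~(2), the depth and cohomological dimension of $_BM$ and $_AM$ coincide by part~(1), so $\dep({}_BM)=\cd({}_BM)$ holds exactly when $\dep({}_AM)=\cd({}_AM)$; this is the CM statement. For the MCM statement, under the hypothesis $\cd(A)=\cd(B)<\infty$ the module $_BM$ is $\cd(B)$-CM iff $_AM$ is $\cd(A)$-CM, again immediately from the equality of local cohomology in part~(1). Part~(3) is a direct consequence of the additivity of local cohomology over finite direct sums: $R^i\Gamma_{\fm_A}(M)=R^i\Gamma_{\fm_A}(M_1)\oplus R^i\Gamma_{\fm_A}(M_2)$, so $\cd(M)=\max\{\cd(M_1),\cd(M_2)\}$ and $\dep(M)=\min\{\dep(M_1),\dep(M_2)\}$; $M$ is CM iff these two numbers agree, which forces $M_1$ and $M_2$ to each be CM of that common depth, and the MCM case follows by the same bookkeeping once we note $\cd(A)$ is the common value.

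Parts~(4) and~(5) are the substantive points, and part~(5) is where the real work lies — I would prove~(5) and deduce~(4) as the special case where the balanced dualizing complex is $A(-\ell)[d]$ (or cite that AS regular connected graded algebras are generalized AS regular, via Theorem~\ref{xxthm3.8}(1) and the known computation of their local cohomology). For~(5): let $A$ be generalized AS regular of global dimension $d$ with balanced dualizing complex $D$. By Theorem~\ref{xxthm3.8}(3), $D\cong R\Gamma_{\fm}(A)^*$, and by Definition~\ref{xxdef3.9}(1)(iii) we have $R^i\Gamma_{\fm}(A)^*\cong\Omega$ for $i=d$ and $0$ otherwise, so $R\Gamma_{\fm}(A)$ is concentrated in a single cohomological degree $d$ with $R^d\Gamma_{\fm}(A)=\Omega^*$. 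This shows $\cd(A)=d=\dep(A)$ as a left module (and symmetrically as a right module), so $\cd(A)$ is finite and $A$ is $\cd(A)$-CM on both sides, i.e. MCM. The identity $\cd(A)=\injdim(A)$ then follows since $\injdim(A)=d$ is part of the definition of generalized AS Gorenstein (Definition~\ref{xxdef3.9}(1)(i)) and we have just identified $\cd(A)$ with $d$. The main obstacle I anticipate is making sure the bookkeeping in~(5) correctly handles the distinction between $R\Gamma_{\fm}(A)$ being concentrated in degree $d$ versus $-d$ (a sign/grading convention issue coming from how $D$ sits in $\D^b$), and confirming that the vanishing $R^i\Gamma_{\fm}(A)=0$ for $i\ne d$ — which Remark~\ref{xxrem3.10}(1) flags as an easily-omitted hypothesis — is genuinely part of the generalized AS Gorenstein package; here it is, since condition~(iii) of Definition~\ref{xxdef3.9}(1) explicitly sets $R^i\Gamma_{\fm}(A)^*=0$ for $i\ne d$.
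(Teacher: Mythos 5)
Your proposal matches the paper's proof essentially step for step: part (1) is cited to \cite[Theorem 8.3]{AZ}, (2) follows from (1), (3) from additivity of $R^i\Gamma_{\fm_A}(-)$, and (4)--(5) from the concentration of $R^\bullet\Gamma_{\fm}(A)$ in degree $d=\injdim(A)$. One point to make explicit in (5): knowing that $R^i\Gamma_{\fm}(A)=0$ for $i\neq d$ only gives $\cd(A)\geq d$ a priori, since $\cd(A)$ in Definition~\ref{xxdef3.5}(1) is a supremum over \emph{all} graded modules; the equality $\cd(A)=\max\{i\mid R^i\Gamma_{\fm}(A)\neq 0\}$ is exactly Theorem~\ref{xxthm3.8}(7), which the paper cites here and which you should too. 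Also, of your two suggested routes to (4), avoid the one through Proposition~\ref{xxpro3.12}(1), since that proposition's proof in the paper invokes Lemma~\ref{xxlem3.11}(4); the direct computation of $R\Gamma_{\fm}(A)\cong A^*(\ell)[-d]$ for connected graded AS regular $A$ is the non-circular route.
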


\begin{proof} (2) This part follows easily from part (1).

(3) This follows from the fact that the functor $R^i\Gamma_{\fm_A}(-)$
is additive.

(4) This follows from the definitions and an easy computation.

(5) By Definition~\ref{xxdef3.9}(1), $R^i\Gamma_{\fm_A}(A)=0$ for 
all $i\neq \injdim(A)$ and $R^i\Gamma_{\fm_A}(A)\neq 0$ when 
$i=\injdim(A)$. The assertion follows from the fact that 
$\cd(A)=\injdim(A)$, due to Theorem \ref{xxthm3.8}(7).
\end{proof}

\begin{proposition} 
\label{xxpro3.12} 
Let $A$ be noetherian commonly graded.
\begin{enumerate} 
\item[(1)]  
If $A$ is AS  regular, then $A$ is generalized AS  regular.
\item[(2)] \cite[Remark~3.6]{RRZ}
If $A$ is connected graded generalized AS regular, then $A$ is
AS regular.
\item[(3)]
\cite[Theorem 4.1(b)]{RRZ} 
Let $H$ be a semisimple Hopf algebra acting on a generalized AS 
regular algebra $A$. Then $A\#H$ is generalized AS regular.
\end{enumerate}
\end{proposition}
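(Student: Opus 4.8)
The plan is to prove part (1) directly from the dualizing-complex machinery of Theorem~\ref{xxthm3.8}, and to deduce parts (2) and (3) from the cited results of \cite{RRZ}. For part (1), assume $A$ is noetherian, commonly graded, and AS regular, and put $d=\injdim(A)=\gldim(A)$, finite by hypothesis. By Theorem~\ref{xxthm3.8}(1) $A$ has a balanced dualizing complex $D$; by Theorem~\ref{xxthm3.8}(3) $D\cong R\Gamma_{\fm}(A)^{\ast}$; and by Theorem~\ref{xxthm3.8}(4) $D$ is also rigid. Verifying Definition~\ref{xxdef3.9}(1) reduces to showing $D\cong\Omega[d]$ for an invertible graded $A$-bimodule $\Omega$: then (ii) is Theorem~\ref{xxthm3.8}(1), (iii) reads $R^{i}\Gamma_{\fm}(A)^{\ast}$ off of $H^{\bullet}(D)$, and (i) together with $\gldim A=d$ promotes ``generalized AS Gorenstein'' to ``generalized AS regular''. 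The first step is to show $D$ is concentrated in cohomological degree $-d$, i.e.\ $R^{i}\Gamma_{\fm}(A)=0$ for $i\neq d$: since $R^{i}\Gamma_{\fm}(A)=\varinjlim_{n}\uExt^{i}_{A}(A/\fm^{n},A)$ and each $A/\fm^{n}$ is finite length with graded-simple composition factors (Lemma~\ref{xxlem3.2}(3) and Proposition~\ref{xxpro3.3}(1)), the defining Ext-vanishing of AS regularity, $\uExt^{i}_{A}(S,A)=0$ for every graded simple $S$ and $i\neq d$, propagates through the long exact sequences to give $\uExt^{i}_{A}(A/\fm^{n},A)=0$ for $i\neq d$, hence $R^{i}\Gamma_{\fm}(A)=0$ for $i\neq d$. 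Setting $\Omega:=R^{d}\Gamma_{\fm}(A)^{\ast}=H^{-d}(D)$ we get $D\cong\Omega[d]$, and $\Omega\neq 0$ since a dualizing complex cannot be zero.

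The crux is to show that $\Omega$ is \emph{invertible} as a graded $A$-bimodule; this is exactly where finite global dimension is indispensable, as AS-Gorenstein vanishing alone does not pin down $\Omega$ (cf.\ Remark~\ref{xxrem3.10}(3)). Since $\gldim(A)=d<\infty$, every finite graded module, $\Omega$ in particular, has a finite graded projective resolution, so $\Omega$ is perfect both as a left and as a right $A$-module. Feeding this into the rigidity of $D$ (Theorem~\ref{xxthm3.8}(4)) --- equivalently, into the fact that the canonical maps $A\to\RHom_{A}(D,D)$ and $A\to\RHom_{A^{\op}}(D,D)$ are isomorphisms in $\D(A^{\e}\mhyphen \Grm)$ --- one runs the standard argument (as in \cite{VdB,Ye}) identifying such a $D$ as a two-sided tilting complex; being concentrated in a single degree, $\Omega$ is therefore an invertible graded $A$-bimodule, with quasi-inverse $\RHom_{A}(\Omega,A)$. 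Alternatively one may note that $\gldim A<\infty$ makes $A$ itself a dualizing complex over $A$, so $D$ differs from $A$ by a twist by an invertible bimodule complex, and comparing cohomological degrees identifies that twist as $\Omega[d]$. This completes part (1).

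For parts (2) and (3) I would invoke \cite{RRZ} directly. A connected graded algebra is $\mathbb{N}$-graded and commonly graded, and on such algebras Definition~\ref{xxdef3.9}(1) agrees with \cite[Definition 3.3]{RRZ}: the condition ``$R^{i}\Gamma_{\fm}(A)=0$ for $i\neq d$'' noted as missing in Remark~\ref{xxrem3.10}(1) is already built into Definition~\ref{xxdef3.9}(1)(iii). Hence (2) is \cite[Remark 3.6]{RRZ}. For (3), with $A$ generalized AS regular and $H$ semisimple, this is \cite[Theorem 4.1(b)]{RRZ}; the proof there uses only the existence of a balanced dualizing complex for $A$, the finiteness of ${}_{A}(A\#H)$ and $(A\#H)_{A}$, the equality $\gldim(A\#H)=\gldim(A)$ recorded after Lemma~\ref{xxlem3.11}, and the fact that $A\#H$ is commonly graded whenever $A$ is --- all available here via Theorem~\ref{xxthm3.8} and Lemma~\ref{xxlem3.11} --- so the argument carries over to the commonly graded (not necessarily $\mathbb{N}$-graded) setting unchanged. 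The one genuinely delicate point in the whole proposition is the invertibility of $\Omega$ in part (1).
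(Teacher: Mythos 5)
Your argument is correct, but for part (1) it takes a genuinely different route from the paper. The paper's proof is a one\mbox{-}line citation: by \cite[Corollary~4.14]{Ye} the balanced dualizing complex of a noetherian connected graded AS regular algebra is explicitly ${}^{\mu}A^{1}(-\ell)[d]$, so $\Omega={}^{\mu}A^{1}(-\ell)$ is a twist of $A$ by a graded automorphism and hence visibly invertible; the concentration $R^{i}\Gamma_{\fm}(A)=0$ for $i\neq d$ and the equality $\cd(A)=\injdim(A)$ are then read off from Theorem~\ref{xxthm3.8}(7) and Lemma~\ref{xxlem3.11}(4). You instead reconstruct this abstractly: you prove the concentration by propagating the Ext-vanishing of Definition~\ref{xxdef1.1}(b) from graded simples (which, for connected graded $A$, are exactly the shifts of $\kk$, so your appeal to ``every graded simple $S$'' is legitimate) through $A/\fm^{n}$ and the direct limit, and you obtain invertibility of $\Omega$ from the uniqueness of dualizing complexes up to tilting when $\gldim A<\infty$. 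Both are valid; the paper's version is shorter and leans entirely on Yekutieli's explicit computation, while yours isolates exactly which abstract inputs (finite global dimension forcing $A$ itself to be a dualizing complex, plus a tilting complex concentrated in one degree being an invertible bimodule) are responsible for invertibility --- which, as you note, is the only delicate point. For parts (2) and (3) you and the paper do the same thing, namely cite \cite{RRZ} and observe the arguments transfer; your extra remark that Definition~\ref{xxdef3.9}(1)(iii) already encodes the vanishing condition flagged in Remark~\ref{xxrem3.10}(1) is a worthwhile sanity check the paper leaves implicit.
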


\begin{proof}
(1) Each noetherian connected graded AS regular algebra has a 
balanced dualizing complex of the form ${^\mu A^1}(-\ell)[d]$ 
by \cite[Corollary 4.14]{Ye}, and is 
MCM with  $\cd(A) = \cd(A^{\op}) = \injdim(A)$  by 
Theorem~\ref{xxthm3.8}(7) and Lemma~\ref{xxlem3.11}(4). 
Hence $A$ satisfies Definition~\ref{xxdef3.9}.

(2) \cite[Remark~3.6]{RRZ} is still valid. 

(3) The proof is the same as \cite[Theorem 4.1(b)]{RRZ}.
\end{proof}

\begin{lemma} 
\label{xxlem3.13} 
Suppose $A$ is a  generalized AS regular algebra and $M$ 
is a finite graded left $A$-module. Then $M$ is {\rm MCM} 
if and only if $M$ is projective. 
\end{lemma}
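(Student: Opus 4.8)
The plan is to use the balanced dualizing complex of $A$, which exists and has the explicit form $\Omega[d]$ as a rigid dualizing complex by Definition~\ref{xxdef3.9} and Remark~\ref{xxrem3.10}(2), where $d=\injdim(A)=\gldim(A)=\cd(A)$ by Lemma~\ref{xxlem3.11}(5). By Theorem~\ref{xxthm3.8}(6), for any finite graded left $A$-module $M$ we have the local duality formula
$$
R\Gamma_{\fm}(M)^{\ast}~\cong~\RHom_A(M,\Omega[d])~\cong~\RHom_A(M,\Omega)[d].
$$
Since $\Omega$ is an invertible graded $A$-bimodule, the functor $\RHom_A(-,\Omega)$ has the same homological behavior as $\RHom_A(-,A)$; in particular $\Ext^i_A(M,\Omega)\cong \Ext^i_A(M,A)\otimes_A\Omega$ (using that $\Omega$ is invertible, hence flat), so $\Ext^i_A(M,\Omega)=0$ if and only if $\Ext^i_A(M,A)=0$. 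Combining with the duality, one gets that $R^{d-i}\Gamma_{\fm}(M)=0$ precisely when $\Ext^i_A(M,A)=0$, for each $i\ge 0$.

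With this translation in hand, the two directions are quick. If $M$ is projective, hence a summand of a finite free module, then $\Ext^i_A(M,A)=0$ for all $i>0$, so by the displayed equivalence $R^{d-i}\Gamma_{\fm}(M)=0$ for all $i>0$, i.e. $R^j\Gamma_{\fm}(M)=0$ for $j<d$; since $A$ is MCM and $M$ is a summand of a free module, $R^d\Gamma_{\fm}(M)\ne 0$ (using additivity of $R^d\Gamma_{\fm}$ and that $R^d\Gamma_{\fm}(A)\ne 0$). Therefore $\dep(M)=\cd(M)=d=\cd(A)$ and $M$ is MCM. Conversely, suppose $M$ is MCM, so $R^j\Gamma_{\fm}(M)=0$ for all $j<d=\cd(A)$. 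Translating through the duality gives $\Ext^i_A(M,A)=0$ for all $i>0$. Since $\gldim(A)=d<\infty$, $M$ has a finite projective resolution; a module of finite projective dimension with $\Ext^{>0}_A(M,A)=0$ must be projective — one argues by induction on $\pdim M$, looking at the last term of the minimal projective resolution and showing that the top syzygy splits off. This completes the equivalence.

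The main obstacle is making the comparison between $\RHom_A(-,\Omega)$ and $\RHom_A(-,A)$ fully rigorous in the noncommutative graded setting: one must be careful that $\Omega$ is a \emph{bimodule} and check that the isomorphism $\Ext^i_A(M,\Omega)\cong\Ext^i_A(M,A)\otimes_A\Omega$ (or at least the vanishing equivalence) respects the grading and the module structures. A clean way around this is to note that $-\otimes_A\Omega$ is an auto-equivalence of $A\mhyphen\grm$ (as $\Omega$ is invertible), so it preserves projectivity and commutes with $R\Gamma_{\fm}$ up to the twist, reducing everything to the case $\Omega=A(-\ell)$, where the statement is the familiar one. The final "finite projective dimension plus $\Ext^{>0}(M,A)=0$ implies projective" step is standard but should be stated, since it is where finite global dimension is used.
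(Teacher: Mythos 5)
Your proof is correct, but it takes a genuinely different route from the paper's. The paper handles the easy direction by quoting Lemma~\ref{xxlem3.11}(3) and (5) directly ($A$ is MCM and direct summands of MCM modules are MCM), and disposes of the converse in one line by observing that $A$ is depth-homogeneous and invoking the noncommutative Auslander--Buchsbaum formula of \cite[Theorem~3.4]{WZ3}, which gives $\pdim M+\dep M=\dep A$ and hence $\pdim M=0$ for $M$ MCM. You instead run both directions through local duality (Theorem~\ref{xxthm3.8}(6)), using invertibility of $\Omega$ to convert the vanishing of $R^{d-i}\Gamma_{\fm}(M)$ into the vanishing of $\Ext^i_A(M,A)$, and then finish with the standard ``finite projective dimension plus $\Ext^{>0}_A(M,A)=0$ implies projective'' argument via a minimal graded resolution and graded Nakayama. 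In effect you are reproving the relevant special case of the Auslander--Buchsbaum equality rather than citing it, which makes your argument more self-contained (modulo Theorem~\ref{xxthm3.8}) at the cost of the bimodule bookkeeping for $\Omega$ that you rightly flag; the reduction to $\add(\Omega)=\add(A)$ as left modules, or your $-\otimes_A\Omega$ autoequivalence remark, does make that rigorous. Two small points: in the forward direction, additivity of $R^d\Gamma_{\fm}$ applied to $M\oplus Q\cong F$ only shows that one of the two summands has nonvanishing top local cohomology, so to get $R^d\Gamma_{\fm}(M)\neq 0$ you should either note $\Hom_A(M,A)\neq 0$ for $M$ a nonzero projective and dualize, or simply cite Lemma~\ref{xxlem3.11}(3) as the paper does; and the final projectivity step should explicitly use that minimal graded free resolutions exist and behave as expected in the commonly graded setting (Proposition~\ref{xxpro3.3}(2)).
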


\begin{proof} 
By definition, $A$ is MCM. By Lemma~\ref{xxlem3.11}(3), 
every finite projective $A$-module is MCM.  For the converse, 
let $M$ be a MCM $A$-module. It is easy to check that $A$ is 
depth-homogeneous in the sense of \cite[page~521]{WZ3}. Now by the
noncommutative version of the Auslander-Buchsbaum formula for 
(not necessarily connected)
graded rings \cite[Theorem 3.4]{WZ3}, $M$ is projective. 
\end{proof}

\begin{lemma}
\cite[Corollary~4.8]{VdB}
\label{xxlem3.14} Let $A$ and $B$ be  noetherian commonly graded 
algebras with balanced dualizing complexes. Suppose $M$ is an 
$(A,B)$-bimodule that is a finite module over $A$ and over $B$. Then
$R^i\Gamma_{\fm_A}(M)=R^i\Gamma_{\fm_{B^{\op}}}(M)$, and,  as a consequence,
$\dep(_AM)=\dep(M_B)$. \qed
\end{lemma}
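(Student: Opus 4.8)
The plan is to prove the sharper statement that $R^{i}\Gamma_{\fm_A}(M)\cong R^{i}\Gamma_{\fm_{B^{\op}}}(M)$ for every $i$; the asserted equality $\dep({}_AM)=\dep(M_B)$ then follows at once from Definition~\ref{xxdef3.5}(2), since in each case the depth is the least index of a nonvanishing local cohomology module. This is precisely \cite[Corollary~4.8]{VdB} in the connected graded case, and its extension to the graded semilocal case is the content of \cite{WZ1}; since a commonly graded noetherian algebra is graded semilocal by Lemma~\ref{xxlem3.2}(3), and since $\fm_A$, $\fm_{B^{\op}}$, local cohomology, and balanced dualizing complexes behave in the commonly graded setting exactly as recorded in Lemmas~\ref{xxlem3.2}, \ref{xxpro3.3} and Theorem~\ref{xxthm3.8}, the argument of \cite{VdB, WZ1} carries over unchanged. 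I sketch that argument below.

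The first step is the symmetry of the torsion \emph{sub}functor. For an $(A,B)$-bimodule $M$ that is finite on each side, the left-$\fm_A$-torsion submodule $T:=\Gamma_{\fm_A}(M)$ is automatically a sub-bimodule, since the left and right actions commute, so $\fm_A^{n}x=0$ forces $\fm_A^{n}(xb)=0$. As ${}_AM$ is a finite module over the left noetherian algebra $A$, so is $T$; being $\fm_A$-torsion, $T$ is killed by some power $\fm_A^{N}$ (choose $N$ to work on a finite generating set, using that $\fm_A$ is two-sided), so $T$ is a finite module over $A/\fm_A^{N}$, which is finite dimensional (as $\fm_A^{N}\supseteq A_{\geq k}$ for $k\gg 0$ by the cofinality in Proposition~\ref{xxpro3.3}(1)). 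Hence $T$ has finite $\kk$-dimension. But a graded right $B$-module of finite dimension is automatically $\fm_{B^{\op}}$-torsion: each homogeneous element is killed on the right by $B_{\geq m}$ for $m$ large by degree reasons, and $B_{\geq m}\subseteq\fm_{B}=\fm_{B^{\op}}$ once $m>-b_l(B)$ by Lemma~\ref{xxlem3.2}(1),(5). Thus $\Gamma_{\fm_A}(M)\subseteq\Gamma_{\fm_{B^{\op}}}(M)$, and the symmetric argument, using that $M_B$ is finite, gives the reverse inclusion; so $\Gamma_{\fm_A}(M)=\Gamma_{\fm_{B^{\op}}}(M)$.

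To upgrade this identity of functors to one of derived functors I would pass through local duality. Let $D_A$ and $D_B$ be the balanced (equivalently, by Theorem~\ref{xxthm3.8}(4), rigid) dualizing complexes of $A$ and of $B$; recall $D_B$ is simultaneously the balanced dualizing complex of $B^{\op}$. By Theorem~\ref{xxthm3.8}(6),
\[
R\Gamma_{\fm_A}(M)^{\ast}\;\cong\;\RHom_{A}(M,D_A)
\qquad\text{and}\qquad
R\Gamma_{\fm_{B^{\op}}}(M)^{\ast}\;\cong\;\RHom_{B^{\op}}(M,D_B),
\]
and since $(-)^{\ast}$ is exact and carries every nonzero graded module to a nonzero one, it suffices to produce an isomorphism $\RHom_{A}(M,D_A)\cong\RHom_{B^{\op}}(M,D_B)$ in the derived category of graded $\kk$-modules. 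This is where uniqueness and rigidity of balanced dualizing complexes (Theorem~\ref{xxthm3.8}(4),(5)) enter: the duality functor attached to the rigid dualizing complex is canonical, hence compatible with the two-sided module structure of the finite bimodule $M$, so both $\RHom$ complexes compute the same dual of $M$. Equivalently, one may combine the formula $R\Gamma_{\fm_A}(M)\cong R\Gamma_{\fm_A}(A)\otimes^{\mathbf{L}}_{A}M$ (valid since $\cd(A)<\infty$) with its right-handed analogue and with the observation --- the $M=A$, resp. $M=B$, case of the previous paragraph --- that $R\Gamma_{\fm_A}(A)$ and $R\Gamma_{\fm_{B^{\op}}}(B)$ are two-sidedly torsion, reducing the problem to comparing these two bimodule complexes. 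Applying $(-)^{\ast}$ once more and using that $(-)^{\ast\ast}$ is the identity on the (cofinite) local cohomology modules in question then yields $R^{i}\Gamma_{\fm_A}(M)\cong R^{i}\Gamma_{\fm_{B^{\op}}}(M)$ for all $i$.

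I expect the only real obstacle to be this last step --- converting the transparent functor-level symmetry into a derived-level isomorphism while carrying both ring actions along --- together with the bookkeeping needed to confirm that the machinery of \cite{VdB, WZ1} (local duality, reflexivity of cofinite modules, and the behaviour of $(-)^{\ast}$) survives the passage from $\mathbb{N}$-graded, or graded semilocal, to merely commonly graded algebras. Both points are already covered by the preliminary results assembled in this section, so no genuinely new input beyond \cite{VdB} should be required, and the proof can reasonably be recorded as a citation.
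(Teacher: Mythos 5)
Your proposal is correct and matches the paper's treatment: the paper gives no proof of Lemma~\ref{xxlem3.14} at all, simply citing \cite[Corollary~4.8]{VdB} (with the passage to the commonly graded case justified by the semilocal machinery of \cite{WZ1} and the preliminaries of Section~\ref{xxsec3}), which is exactly the conclusion you reach. Your sketch of the underlying argument --- symmetry of the torsion subfunctor on finite bimodules plus the identification $R\Gamma_{\fm_A}(-)\cong R\Gamma_{\fm_A}(A)\otimes^{\mathbf{L}}_{A}(-)$ and its right-handed analogue --- is the standard Van den Bergh proof and is sound.
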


\begin{lemma} 
\label{xxlem3.15} 
Let $M$ and $N$ be nonzero finite left
$A$-modules related by the exact sequence 
$$0\to M\to P_{s-1}\to P_{s-2}\to \cdots \to P_0\to N\to 0.$$ 
Then $\dep(M)\geq \min\{\dep(N)+1,~\dep(P_0), \dots, 
~\dep(P_{s-2}), ~\dep(P_{s-1})\}$. If, further,
\linebreak $\dep(P_j)\geq s+\dep(N)$ for each $j$, then
$\dep(M)=\dep(N)+s$. 
\end{lemma}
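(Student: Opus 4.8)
The plan is to break the long exact sequence into short exact sequences and apply the standard depth inequality for each one, then iterate. First I would recall the basic fact that for a short exact sequence $0\to X\to Y\to Z\to 0$ of nonzero finite left $A$-modules, one has $\dep(X)\ge \min\{\dep(Y),\dep(Z)+1\}$: this comes from the long exact sequence in local cohomology $\cdots\to R^{i-1}\Gamma_{\fm}(Z)\to R^i\Gamma_{\fm}(X)\to R^i\Gamma_{\fm}(Y)\to\cdots$, since if $i<\dep(Y)$ and $i<\dep(Z)+1$ (i.e. $i-1<\dep(Z)$) then $R^i\Gamma_\fm(X)$ is squeezed between two zero terms. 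Splitting the given resolution as $0\to M\to P_{s-1}\to K_{s-2}\to 0$, $0\to K_{s-2}\to P_{s-2}\to K_{s-3}\to 0$, \dots, $0\to K_0\to P_0\to N\to 0$, where $K_j=\operatorname{im}(P_{j+1}\to P_j)$, and applying the inequality repeatedly gives $\dep(M)\ge\min\{\dep(P_{s-1}),\dep(P_{s-2})+1,\dots,\dep(P_0)+(s-1),\dep(N)+s\}$, which in particular is bounded below by $\min\{\dep(N)+1,\dep(P_0),\dots,\dep(P_{s-1})\}$ since each $\dep(P_j)+k\ge\dep(P_j)$ and $\dep(N)+s\ge\dep(N)+1$. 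That establishes the first assertion.

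For the second assertion, assume $\dep(P_j)\ge s+\dep(N)$ for all $j$. I would argue that in this regime all the intermediate inequalities become equalities. Set $c=\dep(N)$. Working from the right: in $0\to K_0\to P_0\to N\to 0$, since $\dep(P_0)\ge s+c>c+1$ (here I use $s\ge 1$; if $s=1$ the sequence is simply $0\to M\to P_0\to N\to 0$ and the claim $\dep(M)=c+1$ needs $\dep(P_0)\ge 1+c$, handled directly), the local cohomology sequence forces $R^i\Gamma_\fm(K_0)\cong R^{i-1}\Gamma_\fm(N)$ for $i\le s+c-1$ and an injection at $i=s+c$; hence $\dep(K_0)=c+1$. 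Inductively, $0\to K_{j}\to P_{j}\to K_{j-1}\to 0$ with $\dep(K_{j-1})=c+j$ and $\dep(P_j)\ge s+c\ge c+j+2$ (as long as $j\le s-2$) yields $\dep(K_j)=c+j+1$ by the same squeezing argument, since the relevant shift of $R^{\bullet}\Gamma_\fm(P_j)$ vanishes in the range where $R^{\bullet}\Gamma_\fm(K_{j-1})$ is nonzero. Finally the sequence $0\to M\to P_{s-1}\to K_{s-2}\to 0$ with $\dep(K_{s-2})=c+s-1$ and $\dep(P_{s-1})\ge s+c$ gives $\dep(M)=c+s=\dep(N)+s$.

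The main obstacle is bookkeeping the numerics in the inductive step so that the hypothesis $\dep(P_j)\ge s+\dep(N)$ is strong enough to kill the $P_j$ contribution at every stage: one must check that $s+\dep(N)\ge \dep(K_{j-1})+2 = \dep(N)+j+1$ for all $j$ in range, i.e. $s\ge j+1$, which holds since $j$ runs up to $s-1$ (and the endpoint cases $j=s-1$ and $s=1$ should be checked separately, as indicated). A secondary technical point is ensuring all $K_j$ are nonzero so that $\dep$ is defined; this follows because $M$ and $N$ are nonzero and the sequence is exact — if some $K_j$ vanished the sequence would split off, contradicting either $M\ne 0$ or $N\ne 0$ after a short argument, and in any case one can adopt the convention $\dep(0)=+\infty$ so the inequalities remain valid. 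I would also note that the first assertion requires no boundedness or dualizing-complex hypotheses beyond $A$ being noetherian commonly graded, since only the long exact sequence in local cohomology is used.
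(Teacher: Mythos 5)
Your proposal is correct and follows essentially the same route as the paper: the paper also reduces to the case $s=1$ by induction (equivalently, by splitting the resolution into short exact sequences) and reads off both the inequality and the equality from the long exact sequence in local cohomology, using that $R^{i}\Gamma_{\fm}(P)$ vanishes below $\dep(P)$ to squeeze $R^{i}\Gamma_{\fm}(M)$ and to inject $R^{\dep(N)}\Gamma_{\fm}(N)$ into $R^{\dep(N)+1}\Gamma_{\fm}(M)$. Your extra bookkeeping (the sharper intermediate bound and the remark on possibly vanishing syzygies $K_j$) is sound but not needed beyond what the paper's inductive reduction already supplies.
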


\begin{proof} By induction on $s$, it suffices to show the result in the case 
$s=1$. Letting $P=P_0$ and applying $R\Gamma_{\fm}(-)$ to the short exact
sequence $0\to M\to P\to N\to 0$, we obtain 
\[\cdots \to
R^{i-1}\Gamma_{\fm}(N)\to  R^{i}\Gamma_{\fm}(M)\to
R^{i}\Gamma_{\fm}(P)\to  R^{i}\Gamma_{\fm}(N)\to
R^{i+1}\Gamma_{\fm}(M)\to \cdots.
\] Since $R^i\Gamma_{\fm}(P)=0$ for all
$i<\dep(P)$, we get $R^{i}\Gamma_{\fm}(M)\cong  R^{i-1}\Gamma_{\fm}(N)$. 
The latter is equal to 0 for
all $i\leq \dep(N)$. So, for $i=\dep(N)+1$, one has 
\begin{eqnarray*}
0\neq R^{i-1}\Gamma_{\fm}(N)&\subseteq&  R^{i}\Gamma_{\fm}(M),
\end{eqnarray*}
which implies
that $\dep(M)=\dep(N)+1$, as required. 
\end{proof}

\begin{lemma} 
\label{xxlem3.16} 
Let $A$ and $B$ be noetherian commonly graded algebras equipped 
with balanced dualizing complexes. Suppose that $M$ is a finite 
right $B$-module and $N$ is an $(A,B)$-bimodule that is a finite 
module over $A$ and over $B$. Then 
\begin{eqnarray*}
\dep(\Hom_{B^{\op}}(M,N))&\geq&
\min\{2,\dep(N)\}.
\end{eqnarray*}
\end{lemma}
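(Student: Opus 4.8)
The plan is to bootstrap from Lemma~\ref{xxlem3.15} (used with $s=1$) via a finite free presentation of $M$ over $B$. First I would dispose of the trivial case: we may assume $M\neq 0$ and $N\neq 0$, since otherwise $\Hom_{B^{\op}}(M,N)=0$ and there is nothing to prove. Since $B$ is noetherian and $M$ is a finite right $B$-module, pick an exact sequence of graded right $B$-modules $P_1\xrightarrow{\,\partial\,}P_0\to M\to 0$ with $P_0,P_1$ finitely generated graded free right $B$-modules. As $M$ is finitely generated, $K:=\Hom_{B^{\op}}(M,N)$ is naturally a graded left $A$-module (the left $A$-action being induced by that on $N$), and applying the contravariant left-exact functor $\Hom_{B^{\op}}(-,N)$ produces an exact sequence of finite graded left $A$-modules
\[0\longrightarrow K\longrightarrow U_0\xrightarrow{\,\partial^{\ast}\,}U_1,\qquad U_i:=\Hom_{B^{\op}}(P_i,N).\]
Next I would observe that $\Hom_{B^{\op}}(B(-a),N)\cong N(a)$ as graded left $A$-modules, so each $U_i$ is a finite direct sum of graded shifts of $N$; by additivity and shift-invariance of $R^{j}\Gamma_{\fm_A}(-)$ this gives $\dep(U_i)=\dep(N)$ whenever $U_i\neq 0$. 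Setting $L:=\im(\partial^{\ast})\subseteq U_1$ and $Q:=U_1/L$, I obtain short exact sequences of finite left $A$-modules
\[0\to K\to U_0\to L\to 0,\qquad 0\to L\to U_1\to Q\to 0 .\]

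With this set up, the estimate is a two-step depth chase. If $L=0$ then $K\cong U_0$ and $\dep(K)=\dep(N)\ge\min\{2,\dep(N)\}$, so assume $L\neq 0$ (hence also $U_0,U_1\neq 0$, using $M,N\neq 0$). If $Q\neq 0$, Lemma~\ref{xxlem3.15} applied to $0\to L\to U_1\to Q\to 0$ gives $\dep(L)\ge\min\{\dep(Q)+1,\dep(U_1)\}\ge\min\{1,\dep(N)\}$; if $Q=0$ then $L=U_1$ and $\dep(L)=\dep(N)$, so in either case $\dep(L)\ge\min\{1,\dep(N)\}$. If $K=0$ there is nothing left to prove; otherwise Lemma~\ref{xxlem3.15} applied to $0\to K\to U_0\to L\to 0$ yields
\[\dep(K)\ \ge\ \min\{\dep(L)+1,\ \dep(U_0)\}\ \ge\ \min\{\min\{1,\dep(N)\}+1,\ \dep(N)\}\ =\ \min\{2,\dep(N)\}.\]
where the last equality follows by separating the cases $\dep(N)=0$ and $\dep(N)\ge 1$. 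That is the assertion.

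The argument is essentially formal, so I do not anticipate a real obstacle; the two places that need a sentence of care are (i) checking that $U_i=\Hom_{B^{\op}}(P_i,N)$ really is a direct sum of shifts of $N$ \emph{as a graded left $A$-module}, so that $\dep(U_i)=\dep(N)$, and (ii) keeping track of the degenerate cases $K=0$, $L=0$, $Q=0$, so that Lemma~\ref{xxlem3.15}, which is stated for nonzero modules, is invoked only when it applies. The balanced dualizing complex hypotheses on $A$ and $B$ are used only so that the depth/local cohomology formalism of this section applies to all the modules involved (and, through Lemma~\ref{xxlem3.14}, so that the left and right depths of $N$ agree---though the computation above does not actually need this).
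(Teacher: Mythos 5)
Your proposal is correct and follows essentially the same route as the paper: take a finite free presentation $P_1\to P_0\to M\to 0$, apply $\Hom_{B^{\op}}(-,N)$, and run a two-step depth chase through the resulting short exact sequences using Lemma~\ref{xxlem3.15} (your $L$ and $Q$ are the paper's $\coker_1$ and $\coker_2$). The only differences are cosmetic: you organize the estimate as a single uniform chain of inequalities and handle the degenerate zero-module cases explicitly, whereas the paper splits into cases on $\dep(N)$ and quotes Lemma~\ref{xxlem3.14} to identify $\dep({}_AN)$ with $\dep(N_B)$ --- a step you correctly observe is not strictly needed once one notes $\Hom_{B^{\op}}(P_i,N)$ is a sum of shifts of $N$ as a left $A$-module.
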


\begin{proof} 
Consider a free resolution of the right $B$-module $M$:
$$\cdots \to P_1\to P_0\to M\to 0,$$ 
where $P_i$ is finite for $i=0,1$.
By applying $\Hom_{B^{\op}}(-,N)$ to the exact sequence above, one
has the sequences
$$0\to \Hom_{B^{\op}}(M, N)\to \Hom_{B^{\op}}(P_0, N)\to\coker_1\; \to 0,$$ 
and
$$0\to \coker_1\;\to \Hom_{B^{\op}}(P_1, N)\to \coker_2\; \to 0.$$ 
Since $P_i$ is projective over $B$, $\Hom_{B^{\op}}(P_i, N)$ has (left) 
depth at least equal to $\dep(_AN)$, which also equals $\dep(N_B)$ by 
Lemma \ref{xxlem3.14}. Without loss of generality  assume that
$\dep(N)\geq 1$ and $\dep(\Hom_{B^{\op}}(P_0, N))\geq 1$. If
$\Hom_{B^{\op}}(P_0, N)$ has depth~1, then, by Lemma \ref{xxlem3.15}
$\dep(\Hom_{B^{\op}}(M, N))\geq 1$, as claimed. Now if $\dep(N)\geq 2$, then
$\dep(\Hom_{B^{\op}}(P_i, N)) \geq 2$. It follows from  the short
exact sequences above and Lemma \ref{xxlem3.15} that
$\dep(\coker_1)\geq 1$ and $\dep(\Hom_{B^{\op}}(M, N))\geq 2$, as
required. 
\end{proof}

\begin{lemma}
\label{xxlem3.17} 
Let $A$ be a noetherian AS Gorenstein algebra.
Let $B$ be a graded subalgebra of $A$ such that $A_B$ and $_BA$ are finite. 
Then there is a duality
$$\add_{\grm \mhyphen B} A\cong  \add_{B\mhyphen \grm} A.$$
\end{lemma}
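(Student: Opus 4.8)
The plan is to exhibit the duality as (derived) $\Hom$ into a balanced dualizing complex of $B$, using the Artin--Schelter Gorenstein condition on $A$ to force all the relevant complexes to collapse to a single shifted module that again lies in the appropriate $\add$-category.

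First I would install the dualizing complexes. Since $A$ is noetherian connected graded AS Gorenstein of injective dimension $d$, it has a balanced dualizing complex $D_A\cong{}^{\mu}A^{1}(-\ell)[d]$ by \cite[Corollary~4.14]{Ye}, where $\mu$ is the Nakayama automorphism and $\ell$ the AS index; in particular $D_A$ is concentrated in cohomological degree $-d$, and --- because twisting a one-sided module by a ring automorphism gives an isomorphic module --- the underlying graded left $A$-module and the underlying graded right $A$-module of $D_A$ are both isomorphic to $A(-\ell)[d]$. The subalgebra $B$ is again connected graded and noetherian, and --- since $A$ is module-finite over $B$ on both sides --- it carries a balanced dualizing complex $D_B$ for which the change-of-rings isomorphisms $\RHom_{B}(A_B,D_B)\cong D_A$ and $\RHom_{B^{\op}}({}_{B}A,D_B)\cong D_A$ hold, the first as complexes of graded left $B$-modules and the second as complexes of graded right $B$-modules (via the $(A,B)$-bimodule structure of $A$; here I would lean on \cite{VdB,WZ1} for the descent of balanced dualizing complexes along the finite extension $B\hookrightarrow A$, and on Lemma~\ref{xxlem3.11}(1) to compare $\fm_A$- and $\fm_B$-torsion).

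Next I would check that the functors $\mathbf{D}:=H^{-d}\bigl(\RHom_{B}(-,D_B)\bigr)$ and $\mathbf{D}^{\circ}:=H^{-d}\bigl(\RHom_{B^{\op}}(-,D_B)\bigr)$ interchange $\add_{\grm\mhyphen B}A$ and $\add_{B\mhyphen\grm}A$. If $M$ is a direct summand of a finite sum $\bigoplus_{i}A_B(n_i)$, then $\RHom_{B}(M,D_B)$ is a direct summand of $\bigoplus_{i}\RHom_{B}(A_B,D_B)(n_i)\cong\bigoplus_{i}D_A(n_i)$, a complex concentrated in cohomological degree $-d$; hence $\RHom_{B}(M,D_B)\cong\mathbf{D}(M)[d]$, the object $\mathbf{D}(M)$ is a genuine graded left $B$-module, and it is a direct summand of $\bigoplus_{i}A(-\ell+n_i)$ as a graded left $B$-module, so $\mathbf{D}(M)\in\add_{B\mhyphen\grm}A$. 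Symmetrically $\mathbf{D}^{\circ}$ sends $\add_{B\mhyphen\grm}A$ into $\add_{\grm\mhyphen B}A$.

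Finally I would invoke biduality: the canonical morphism $\id\to\RHom_{B^{\op}}\bigl(\RHom_{B}(-,D_B),D_B\bigr)$ is an isomorphism on the bounded derived category of finite graded right $B$-modules, and on an object $M\in\add_{\grm\mhyphen B}A$ both the inner and the outer $\RHom$ are concentrated in cohomological degree $-d$, so cancelling the two shifts yields a natural isomorphism $\id\xrightarrow{\ \sim\ }\mathbf{D}^{\circ}\circ\mathbf{D}$ on $\add_{\grm\mhyphen B}A$, and likewise $\id\xrightarrow{\ \sim\ }\mathbf{D}\circ\mathbf{D}^{\circ}$ on $\add_{B\mhyphen\grm}A$; hence $\mathbf{D}$ is the desired duality $\add_{\grm\mhyphen B}A\simeq\add_{B\mhyphen\grm}A$. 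I expect the only genuinely nonformal point to be the one in the second paragraph --- the existence of $D_B$ together with the identification $\RHom_{B}(A_B,D_B)\cong D_A$ --- since everything downstream is manipulation of one-term complexes. (When $B$ happens to be Cohen--Macaulay with canonical bimodule $\omega_B$, the derived functors above may be replaced by the honest functors $\uHom_{B}(-,\omega_B)$ and $\uHom_{B^{\op}}(-,\omega_B)$: then $A$ is an MCM $B$-module on both sides, these $\Hom$'s are $\omega_B$-acyclic, and $\uHom_{B}(A_B,\omega_B)\cong A(-\ell)$, recovering the pattern familiar from the commutative McKay correspondence, where $B=A^{G}$ is Cohen--Macaulay.)
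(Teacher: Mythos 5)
Your proposal is, at its core, the same proof as the paper's: the paper takes the duality to be $F:=R^{d}\Gamma_{\fm_B}(-)^{*}(\ell)$ (and $G:=R^{d}\Gamma_{\fm_{B^{\op}}}(-)^{*}(\ell)$ on the other side), shows $F({}_BA)\cong A_B$ and $GF({}_BA)\cong{}_BA$, and concludes by additivity; by local duality (Theorem~\ref{xxthm3.8}(6)) your $H^{-d}\bigl(\RHom_B(-,D_B)\bigr)$ is literally this functor up to the twist $(\ell)$, and your biduality step is the paper's $GF\cong\id$. The one point where the routes genuinely diverge is the one you yourself flag as ``nonformal'': you need the balanced dualizing complex $D_B$ to exist and to satisfy $\RHom_B(A_B,D_B)\cong D_A$. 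Under the hypotheses of the lemma $B$ is an arbitrary graded subalgebra over which $A$ is finite on both sides, and the paper only establishes finiteness of $\cd(B)$ and the $\chi$-condition for $B$ (hence existence of $D_B$ via Theorem~\ref{xxthm3.8}(2)) in the case of a \emph{splitting} subring (Lemma~\ref{xxlem4.3}); for a general such $B$ this descent is not discharged by your citation of \cite{VdB,WZ1} and would need an argument. The paper's formulation sidesteps this entirely: the functor $R^{d}\Gamma_{\fm_B}(-)^{*}$ needs no existence theorem, and Lemma~\ref{xxlem3.11}(1) (i.e.\ \cite[Theorem~8.3]{AZ}) gives $R^{d}\Gamma_{\fm_B}(M)=R^{d}\Gamma_{\fm_A}(M)$ for $A$-modules directly, so the computation $F({}_BA)\cong R^{d}\Gamma_{\fm_A}(A)^{*}(\ell)\cong A_B$ uses only the AS Gorenstein condition on $A$. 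If you replace ``$\RHom$ into $D_B$'' by ``$R^{d}\Gamma_{\fm_B}(-)^{*}$'' throughout, your argument closes without the undischarged existence claim and coincides with the paper's.
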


\begin{proof} Let $d$ be the injective dimension of $A$ and $\ell$ be the
AS index of $A$. We will show that the duality functor is 
$F:=R^d \Gamma_{\fm_B}(-)^*(\ell)$.  By Lemma \ref{xxlem3.11}(1), if
$M$ is a graded left $A$-module, then 
$R^d\Gamma_{\fm_A}(M)^*=R^d\Gamma_{\fm_B}(M)^*$. Hence
$$F({_B A})=R^d\Gamma_{\fm_A}(A)^*(\ell) \cong A_B,$$
where $A_B$ is viewed as a graded right $B$-module. Thus $F$ is a functor
from $\add_{B\mhyphen \grm} A$ to $\add_{\grm \mhyphen B} A$.
Similarly, $G:=R^d \Gamma_{\fm_{B^{\op}}}(-)^*(\ell)$ is a functor 
from $\add_{\grm \mhyphen B} A$ to $\add_{B\mhyphen \grm} A$.
It is clear that $GF(_BA)={_BA}$ and $FG(A_B)=A_B$. 
Therefore $F$ induces 
a duality $\add_{\grm \mhyphen B} A\cong  \add_{B\mhyphen \grm} A.$
\end{proof}

Theorem \ref{xxthmA}~now follows from 
Proposition \ref{xxpro2.3}, Corollary \ref{xxcor2.6} and Lemma 
\ref{xxlem3.17}.

%%%%%%%%%%%%%%%%    SECTION 4
%%%%%%%%%%%%%%%%%%%%%%%%%%%%%%%%%%%%%%%%%%%%%%%%%%%%%%%%%%%%%%%%%%%%%%%%%%
%%%%%%%%%%%%%%%%%%%%%%%%%%%%%%%%%%%%%%%%%%%%%%%%%%%%%%%%%%%%%%%%%%%%%%%%%%%%%%%%%%%%%%%%%%%%%%%%%%%%%%%%%%%%%%%
%%%%%%%%%%%%%%%%%%%%%%%%%%%%%%%%%%%%%%%%%%%%%%%%%%%%%%%%%%%%%%%%%%%%%%%%%%%%%%%%%%%%%%%%%%%%%%%%%%%%%%%%%%%%%%%
%%%%%%%%%%%%%%%%%%%%%%%%%%%%%%%%%%%%%%%%%%%%%%%%%%%%%%%%%%%%%%%%%%%%%%%%%%%%%%%%%%%%%%%%%%%%%%%%%%%%%%%%%%%%%%%
%%%%%%%%%%%%%%%%%%%%%%%%%%%%%%%%%%%%%%%%%%%%%%%%%%%%%%%%%%%%%%%%%%%%%%%%%%%%%%%%%%%%%%%%%%%%%%%%%%%%%%%%%%%%%%%
%%%%%%%%%%%%%%%%%%%%%%%%%%%%%%%%%%%%%%%%%%%%%%%%%%%%%%%%%%%%%%%%%%%%%%%%%%%%%%%%%%%%%%%%%%%%%%%%%%%%%%%%%%%%%%%
%%%%%%%%%%%%%%%%%%%%%%%%%%%%%%%%%%%%%%%%%%%%%%%%%%%%%%%%%%%%%%%%%%%%%%%%%%%%%%%%%%%%%%%%%%%%%%%%%%%%%%%%%%%%%%%
%%%%%%%%%%%%%%%%%%%%%%%%%%%%%%%%%%%%%%%%%%%%%%%%%%%%%%%%%%%%%%%%%%%%%%%%%%%%%%%%%%%%%%%%%%%%%%%%%%%%%%%%%%%%%%%

\section{A noncommutative version of Herzog's theorem}
\label{xxsec4}

A theorem of Herzog states that if $G$ is a finite 
subgroup of ${\text{GL}}(2,\kk)$ acting linearly on $\kk[x_1,x_2]$, then 
the indecomposable MCM $\kk[x_1,x_2]^G$-modules are precisely the 
indecomposable $\kk[x_1,x_2]^G$-direct summands of $\kk[x_1,x_2]$ \cite{He} (or, see \cite[Section~6.1]{LW}). 
Here we establish a noncommutative version of this nice result.

\medbreak Consider the following terminology.

\begin{definition} 
\label{xxdef4.1}
Let $A$ be a noetherian ring. 
\begin{enumerate}
\item[(1)] 
A subring $B\subseteq A$ is called a \emph{splitting subring} of $A$ if 
$A_B$ and $_BA$ are finite and if
$B$ is a direct summand of $A$ as a $B$-bimodule.
\item[(2)] 
When $B$ is a splitting subring of $A$, the projection $\Phi:A\to B$ is a $B$-bimodule homomorphism called
the \emph{Reynolds operator}. 
\end{enumerate} 
\end{definition}

Note that as a consequence of (1), $B$ is noetherian. Consider the
following lemmas about splitting subrings.

\begin{lemma} 
\cite[Lemma 2.4]{KKZ2} 
\label{xxlem4.2} 
If $H$ is a semisimple Hopf algebra, with $A$ a noetherian 
left $H$-module algebra, then $A^H$ is a splitting subring 
of $A$, where the Reynolds operator $\Phi$ is given by the 
action of the integral of $H$ on $A$. \qed
\end{lemma}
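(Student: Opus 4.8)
The plan is to prove the two halves of the statement separately: that $A$ is finite as a left and as a right $A^{H}$-module, and that $A^{H}$ is a direct summand of $A$ as an $A^{H}$-bimodule with the prescribed splitting. The bimodule splitting is where the integral enters, and it reduces to a short Sweedler-notation computation; the module-finiteness I would take from the literature.

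First I would fix a normalized left integral $\Lambda\in H$: its existence is guaranteed by the semisimplicity of $H$ (Maschke's theorem for Hopf algebras), so $h\Lambda=\epsilon(h)\Lambda$ for all $h\in H$ and $\epsilon(\Lambda)=1$, hence $\sum\epsilon(\Lambda_{1})\Lambda_{2}=\sum\Lambda_{1}\epsilon(\Lambda_{2})=\Lambda$. Define $\Phi:A\to A$ by $\Phi(a)=\Lambda(a)$, the action of $\Lambda$ on $a$. I would then check, in order: (i) $\Phi(A)\subseteq A^{H}$, since $h(\Phi(a))=(h\Lambda)(a)=\epsilon(h)\Lambda(a)=\epsilon(h)\Phi(a)$ for every $h\in H$; (ii) $\Phi|_{A^{H}}=\id$, since for $b\in A^{H}$ we have $\Phi(b)=\Lambda(b)=\epsilon(\Lambda)\,b=b$, and in particular $\Phi^{2}=\Phi$; (iii) $\Phi$ is an $A^{H}$-bimodule homomorphism, since for $b\in A^{H}$ and $a\in A$, the $H$-module algebra axiom $\Lambda(xy)=\sum\Lambda_{1}(x)\Lambda_{2}(y)$ together with $\Lambda_{i}(b)=\epsilon(\Lambda_{i})b$ give
$$\Phi(ba)=\textstyle\sum\Lambda_{1}(b)\,\Lambda_{2}(a)=\textstyle\sum\epsilon(\Lambda_{1})\,b\,\Lambda_{2}(a)=b\,\Lambda(a)=b\,\Phi(a),$$
and symmetrically $\Phi(ab)=\Phi(a)\,b$. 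From (i)--(iii), $\Phi$ is an idempotent endomorphism of the $A^{H}$-bimodule $A$ with image $A^{H}$; hence $\ker\Phi$ is an $A^{H}$-sub-bimodule and $A=A^{H}\oplus\ker\Phi$ as $A^{H}$-bimodules. Thus $A^{H}$ is a bimodule direct summand of $A$, and $\Phi$, the action of the integral, is the Reynolds operator.

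It remains to see that $A_{A^{H}}$ and ${}_{A^{H}}A$ are finite. For this I would invoke the standard fact that, for a semisimple (hence finite-dimensional) Hopf algebra $H$ acting on a noetherian algebra $A$, the algebra $A$ is module-finite over $A^{H}$ on both sides; this is exactly what is recorded in \cite[Lemma 2.4]{KKZ2}, and in the connected graded case of interest to us it is classical. Together with the preceding paragraph, all conditions in Definition~\ref{xxdef4.1} hold. The only genuine obstacle is this module-finiteness: it truly uses the semisimplicity of $H$ and the noetherian hypothesis, and does not follow from the formal manipulations with the integral, whereas the bimodule splitting itself is just the computation above.
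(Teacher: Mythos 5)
Your argument is correct: the paper itself gives no proof of this lemma, simply citing \cite[Lemma 2.4]{KKZ2}, and your computation with the normalized integral is exactly the standard argument that citation encapsulates --- the three checks (image lands in $A^H$, restriction to $A^H$ is the identity, and $A^H$-bilinearity via the Sweedler computation) are all valid and use only the left-integral property $h\Lambda=\epsilon(h)\Lambda$ together with $\epsilon(\Lambda)=1$. The one half you do not actually prove, module-finiteness of $A_{A^H}$ and ${}_{A^H}A$, you defer back to the very lemma being cited; this matches what the paper does, but be aware that it is a genuinely separate (Montgomery--Small-type) argument relying on the surjectivity of your $\Phi$ together with the noetherian hypothesis, not a formal consequence of the integral manipulation.
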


\begin{lemma} 
\label{xxlem4.3} 
Let $A$ be a noetherian commonly graded algebra.
Suppose that $B$ is a splitting subring of $A$. 
\begin{enumerate} 
\item[(1)] 
We have that $\cd(B)=\cd(A)$. 
\item[(2)]
$B$ satisfies $\chi$ if and only if $A$ does. 
\item[(3)] 
$B$ has balanced dualizing complex if and only if $A$ does. 
\item[(4)] 
If $A$ is {\rm CM}, then so is $B$.
\item[(5)] 
If $A$ is {\rm CM}, then $A$ is {\rm MCM} as a left $B$-module. 
\end{enumerate} 
\end{lemma}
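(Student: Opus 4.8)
The plan is to exploit in turn the two features of a splitting subring: finiteness of $A$ over $B$ on both sides, and the fact that $B$ is a $B$-bimodule direct summand of $A$. For part (1), since $_BA$ and $A_B$ are finite, Lemma~\ref{xxlem3.11}(1) gives $\cd(A)\le\cd(B)$; conversely, because $B$ is a direct summand of $A$ as a $B$-bimodule, $R^i\Gamma_{\fm_B}(A)\cong R^i\Gamma_{\fm_B}(B)\oplus R^i\Gamma_{\fm_B}(C)$ where $A\cong B\oplus C$ as left $B$-modules, so $R^i\Gamma_{\fm_B}(B)$ is a summand of $R^i\Gamma_{\fm_B}(A)=R^i\Gamma_{\fm_A}(A)$ (using Lemma~\ref{xxlem3.11}(1) again). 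Hence any degree in which $R^i\Gamma_{\fm_B}(B)\ne0$ is a degree in which $R^i\Gamma_{\fm_A}(A)\ne0$, giving $\cd(B)\le\cd(A)$; combined with the first inequality this forces equality. (One should also note that $\cd(B)$ computed on $B$ itself agrees with the sup over all finite graded $B$-modules, which holds because $A$ is finite over $B$ and every finite graded $B$-module embeds appropriately — this is the standard reduction used in \cite{AZ}.)

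For part (2), the $\chi$-condition is inherited in both directions: if $A$ satisfies $\chi$, then for a finite graded left $B$-module $M$ and a simple graded left $B$-module $S$ one relates $\uExt^i_B(S,M)$ to Ext-groups over $A$ via the adjunction and the finiteness of $A$ over $B$ (this is exactly the kind of transfer of $\chi$ along finite extensions / splitting subrings recorded in the references \cite{AZ,VdB}); the splitting hypothesis is what lets one push the conclusion back from $A$ to $B$ by taking a direct summand. For part (3), given parts (1) and (2) and the fact that $\cd(B^{\op})=\cd(A^{\op})$ by the same argument applied on the right, Theorem~\ref{xxthm3.8}(2) immediately yields that $B$ has a balanced dualizing complex if and only if $A$ does — so (3) is essentially a formal consequence of (1), (2), and their opposite-ring versions.

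For parts (4) and (5): assume $A$ is CM. By part (1) $\cd(B)=\cd(A)=:d<\infty$. Since $A$ is CM, $\dep(_AA)=d$; by Lemma~\ref{xxlem3.11}(1), $R^i\Gamma_{\fm_B}(_BA)=R^i\Gamma_{\fm_A}(A)$, so $\dep(_BA)=\cd(_BA)=d=\cd(B)$, which says precisely that $_BA$ is MCM as a left $B$-module — this is part (5) (and the right-handed analogue gives $A_B$ MCM as a right $B$-module). For part (4), writing $A\cong B\oplus C$ as a left $B$-module, since the left $B$-module $_BA$ is MCM of depth $d$, Lemma~\ref{xxlem3.11}(3) forces every nonzero summand, in particular $_BB$, to be MCM of depth $d$ as well; the same argument on the right shows $B_B$ is MCM. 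Hence $B$ is CM. The step I expect to require the most care is part (2): verifying that the $\chi$-condition transfers along a splitting subring in both directions needs the right compatibility between the Ext-groups over $B$ and over $A$ together with the Reynolds operator to extract summands, rather than being a one-line citation; parts (1), (4), (5) are short homological computations and part (3) is then automatic.
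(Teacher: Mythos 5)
Your proposal is correct and follows essentially the same route as the paper: $\cd(A)\le\cd(B)$ from Lemma~\ref{xxlem3.11}(1) plus the bimodule splitting to make $R\Gamma_{\fm_B}(B)$ a direct summand of $R\Gamma_{\fm_A}(A)$ for the reverse inequality (via Theorem~\ref{xxthm3.8}(7)), part (3) as a formal consequence of (1), (2) and Theorem~\ref{xxthm3.8}(2), and parts (4), (5) from the same summand/identification of local cohomology. The only place you stop short of an actual argument is the transfer of $\chi$ from $A$ down to $B$ in part (2); the paper carries out exactly the mechanism you gesture at, namely setting $N=A\uotimes_B M$ (which contains $M$ as a $B$-module summand via the Reynolds operator) and applying the change-of-rings spectral sequence $\uExt^p_A(\uTor_q^{B}(A,S),N)\Rightarrow\uExt^{p+q}_B(S,N)$, together with the citation of \cite[Theorem 8.3(2)]{AZ} for the other direction.
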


\begin{proof} 
(1) By Lemma~\ref{xxlem3.11}(1), $\cd(A)\leq \cd(B)$. Since
$R\Gamma_{\fm_A}(A)=R\Gamma_{\fm_B}(A)$,  again by
Lemma~\ref{xxlem3.11}(1), we have that
$R\Gamma_{\fm_B}(B)$ is a direct summand of  $R\Gamma_{\fm_A}(A)$. Therefore 
\begin{eqnarray*}
\max\{i\mid R^i\Gamma_{\fm_A}(A)\neq 0\} &\geq& \max\{i\mid R^i\Gamma_{\fm_B}(B)
\neq 0\}.
\end{eqnarray*}
 By Theorem \ref{xxthm3.8}(7), $\cd(A)\geq \cd(B)$, so the
assertion follows.

\smallskip

(2) We adapt the proof of \cite[Proposition 8.7]{AZ}. By \cite[Theorem
8.3(2)]{AZ}, if $B$ satisfies $\chi$, so does $A$. Conversely, we assume
that $A$ satisfies $\chi$. Let $M$ be a noetherian graded left
$B$-module. We claim that $M$ satisfies the $\chi$-condition, namely,
that $\uExt^i_{B}(S,M)$ is finite dimensional for all $i$, for all simple left
$B$-modules $S$. Let $N=A\uotimes_B M$. Since $_BM$ is finite, so is
$_AN$. Thus $_AN$ and $_BN$ are noetherian graded modules. Say 
$A$ decomposes as $B \oplus C$ as $B$-bimodules, then the Reynolds
operator $\Phi$ induces a decomposition of left $B$-modules $_BN=
{}_BM\oplus (_BC\otimes {}_B M)$. It suffices to show that $_BN$
satisfies the $\chi$-condition. By a change of rings spectral sequence,
\begin{eqnarray*}
E^{p,q}_{2}:=\uExt^p_A(\uTor_q^{B}(A, {}_BS), {}_AN) &\Rightarrow& \uExt^{p+q}_B(S, {}_BN),
\end{eqnarray*}
one sees that $_AN$ satisfies $\chi$ implies
that $_BN$ satisfies $\chi$.

\smallskip

(3) This follows from parts (1,2) and Theorem \ref{xxthm3.8}(2).

\smallskip

(4) The assertion follows from the fact that $R^i\Gamma_{\fm_B}(B)$ is
a direct summand of \linebreak $R^i\Gamma_{\fm_B}(A) \cong
R^i\Gamma_{\fm_A}(A)$.

\smallskip

(5) The assertion follows from  part~(1)  and
Lemma~\ref{xxlem3.11}(2).
\end{proof}

This brings us to the main result of this section.

\begin{theorem} 
\label{xxthm4.4} 
Let $A$ be a generalized AS regular algebra of global
dimension 2, and let $B$ be a splitting subring of $A$. Then, up to
isomorphism and a degree shift, the indecomposable {\rm MCM} left $B$-modules
are precisely the indecomposable $B$-direct summands of $_BA$.
\end{theorem}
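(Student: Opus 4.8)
The plan is to establish the two inclusions separately, using the dualizing-complex technology from Section~\ref{xxsec3} together with the duality of Lemma~\ref{xxlem3.17}. First, recall that $A$ generalized AS regular of global dimension $2$ has $\cd(A)=\injdim(A)=2$ and is MCM over itself by Lemma~\ref{xxlem3.11}(5); since $B$ is a splitting subring, Lemma~\ref{xxlem4.3} gives that $B$ is CM with $\cd(B)=2$, that $B$ has a balanced dualizing complex, and that $_BA$ is MCM as a left $B$-module. Consequently every indecomposable $B$-direct summand of $_BA$ is MCM by Lemma~\ref{xxlem3.11}(3), which settles the ``easy'' inclusion: the indecomposable summands of $_BA$ are among the indecomposable MCM left $B$-modules.

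For the reverse inclusion, let $M$ be an indecomposable MCM left $B$-module; I want to realize $M$ (up to shift) as a $B$-summand of $_BA$. The strategy is to pass through $A$ via the adjunction between restriction and extension of scalars along $B\hookrightarrow A$, exploiting the Reynolds operator. Form $N := A\uotimes_B M$, a graded left $A$-module, finite over $A$ and (by restriction) over $B$. The Reynolds operator $\Phi:A\to B$ splits the unit $M\to A\uotimes_B M$ of the adjunction as a map of left $B$-modules, so $_BM$ is a direct summand of $_BN$. The key point will be to show that $_BN$ — hence $_AN$ — is itself MCM: depth is preserved because $M$ is a $B$-summand of $N$ and $N/M$ (the complementary summand, coming from the $C$-part of $A=B\oplus C$) has depth at least that of $M$, so $\dep(_BN)=\dep(_BM)=2=\cd(B)$, forcing $_BN$ MCM, and then $_AN$ is MCM by Lemma~\ref{xxlem3.11}(2) since $\cd(A)=\cd(B)$. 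Now $A$ generalized AS regular and $_AN$ MCM gives, by Lemma~\ref{xxlem3.13}, that $_AN$ is a finite \emph{projective} left $A$-module; since $A$ has finite global dimension and is connected-graded-like (commonly graded, with $\fm_A$-local behavior via Proposition~\ref{xxpro3.3}), finite graded projectives are free up to shift, so $_AN\cong\bigoplus_k A[-n_k]$.

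It remains to descend this back to $B$: restricting, $_BN\cong\bigoplus_k {}_BA[-n_k]$ as graded left $B$-modules, and since $_BM$ is a direct summand of $_BN$, the Krull–Schmidt theorem for finite graded modules over the (graded semilocal, by Lemma~\ref{xxlem3.2}(3)) noetherian algebra $B$ shows that the indecomposable $_BM$ must be isomorphic, up to degree shift, to an indecomposable summand of $_BA$. This is the inclusion we wanted, and combined with the first paragraph it gives the stated bijection between indecomposable MCM left $B$-modules (up to shift) and indecomposable $B$-summands of $_BA$. I expect the main obstacle to be the depth bookkeeping in the middle step — namely verifying that the complementary summand $_BC\uotimes_B M$ of $N$ does not drop depth, so that $_BN$ genuinely inherits maximal depth from $M$; one likely needs that $_BC$ is itself a maximal Cohen–Macaulay $B$-module (which follows since $C$ is a $B$-summand of the MCM module $_BA$) together with a depth estimate for $_BC\uotimes_B M$, perhaps via a free presentation of $M$ over $B$ and Lemma~\ref{xxlem3.15}, or alternatively by working throughout on the $\Hom$ side using Lemma~\ref{xxlem3.16} and the duality of Lemma~\ref{xxlem3.17} instead of the $\otimes$ side. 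A secondary technical point is ensuring the graded Krull–Schmidt / "projective implies free up to shift" facts are available in the commonly graded (not necessarily $\mathbb{N}$-graded) setting, which should follow from the Nakayama-type results of Proposition~\ref{xxpro3.3} and the finiteness in Lemma~\ref{xxlem3.2}.
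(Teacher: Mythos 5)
Your first inclusion matches the paper's argument. The reverse inclusion, however, has a genuine gap at precisely the step you flag as the ``main obstacle'': the claim that the complementary summand ${}_BC\uotimes_B M$ of $N=A\uotimes_B M$ has depth at least $\dep(M)$ is false in general, because $-\uotimes_B M$ is only right exact and does not preserve depth, even when ${}_BC$ is MCM. Already in the classical case $B=\kk[x^2,xy,y^2]\subset A=\kk[x,y]$ with $M$ the MCM $B$-module of odd-degree elements (generated by $x,y$ with relations $(xy)\cdot x-(x^2)\cdot y=0$ and $(y^2)\cdot x-(xy)\cdot y=0$), the element $x\otimes y-y\otimes x\in A\uotimes_B M$ is nonzero yet annihilated by $\fm_A$, so $N$ has depth $0$ and is not MCM; the argument then never reaches Lemma~\ref{xxlem3.13}. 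A secondary problem is the assertion that finite graded projectives over $A$ are free up to shift: this fails for commonly graded, non-connected $A$ (cf.\ Remark~\ref{xxrem3.10}(3)), though it is also unnecessary, since being a $B$-summand of a projective, hence of a (shifted) free, $A$-module already suffices for the Krull--Schmidt step.

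The repair is exactly the alternative you mention only in passing: work on the $\Hom$ side. The paper sets $\Omega=R^2\Gamma_{\fm_B}(B)^*$, $P=R^2\Gamma_{\fm_A}(A)^*$, and $M^\vee=\Hom_B(M,\Omega)$ with $(M^\vee)^\vee\cong M$; applying $R^2\Gamma_{\fm_B}(-)^*$ to the Reynolds splitting $A=B\oplus C$ gives a split exact sequence $0\to\Omega\to P\to C^\vee\to 0$ of $B$-bimodules, and applying $\Hom_{B^{\op}}(M^\vee,-)$ exhibits $M\cong\Hom_{B^{\op}}(M^\vee,\Omega)$ as a $B$-direct summand of $Y:=\Hom_{B^{\op}}(M^\vee,P)$. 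On this side Lemma~\ref{xxlem3.16} does give $\dep(Y)\geq 2$ (contravariant $\Hom$ applied to a free presentation of $M^\vee$ is left exact, which is what makes the depth bookkeeping work), so $Y$ is MCM over $A$, hence projective by Lemma~\ref{xxlem3.13}, and Krull--Schmidt finishes as you describe. Your overall architecture --- split off $M$ from something that becomes projective over $A$ --- is the right one, but the tensor-side route you chose as the main line of argument does not close.
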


An example of this result was discussed in \cite[Section~2.3]{Jo6}.

\begin{proof}[Proof of Theorem \ref{xxthm4.4}] 
Since $A$ is generalized AS regular, it is CM and MCM 
by Lemma~\ref{xxlem3.11}(5). Hence $A$ is left MCM  
over $B$ by Lemma~\ref{xxlem4.3}(5), and every $B$-direct summand of
$_BA$ is MCM over $B$, as well, by Lemma \ref{xxlem3.11}(3).

To begin the other direction, let $D_A$ be the balanced dualizing
complex over $A$, which exists by Theorem~\ref{xxthm3.8}(2).  Let
$d=\cd(A)$. Then $D_A\cong P[d]$, where $P=R^d\Gamma_{\fm_A}(A)^*$ by
Theorem \ref{xxthm3.8}(3). Let $D_B$ be the balanced dualizing complex
over $B$, which exists by Lemma~\ref{xxlem4.3}(3). It follows that $D_B$
has the form $\Omega[d]$, where $\Omega= R^d\Gamma_{\fm_B}(B)^*$ by
Theorem \ref{xxthm3.8}(3).

For every left MCM $B$-module $M$, define $M^\vee$ to be 
\[
\begin{array}{rll}
M^\vee &:=
\RHom_B(M[d],D_B)& =\RHom_B(M,D_B)[-d]\\
&=R^d\Gamma_{\fm_B}(M)^*
&=\Hom_B(M,\Omega),
\end{array}
\]
where the last two equalities hold
using local duality by Theorem~\ref{xxthm3.8}(6). Then 
\[
(M^\vee)^\vee ~\cong~ \Hom_{B^{\op}}(M^\vee[d],
D_B)~\cong~ \Hom_{B^{\op}}(\RHom_B(M,D_B), D_B) ~\cong~ M.
\]
Denote by $C$ the kernel of the Reynolds operator $\Phi:A\to B$. 
Applying $R^d\Gamma_{\fm_B}(-)^*$ to $\Phi$ gives a
split exact sequence of $B$-bimodules 
\begin{equation}
\label{E4.4.1}\tag{E4.4.1} 
0\to \Omega\to P\to C^{\vee}\to 0. 
\end{equation}
Applying $\Hom_{B^\op}(M^\vee,-)$ to \eqref{E4.4.1} gives 
a split exact sequence of left $B$-modules 
\[
0\to \Hom_{B^\op}(M^\vee,\Omega)\to
\Hom_{B^\op}(M^\vee,P)\to
\Hom_{B^\op}(M^\vee,C^{\vee})\to 0.
\] 
This shows that $M\cong \Hom_{B^\op}(M^\vee,\Omega)$
is a $B$-direct summand of the left $A$-module $\Hom_{B^{\op}}(M^\vee,P)$.

By the theory of dualizing complexes \cite[Proposition 3.4]{Ye}, $M^\vee$ is a 
finite right $B$-module. Since $P$ is invertible, the left $A$-module $_AP$ is a 
progenerator; so, $\dep(P)=\dep(A)=2$.  By Lemma
\ref{xxlem3.16}, $Y:=\Hom_{B^{\op}}(M^\vee,P)$ is a noetherian
left $A$-module of depth at least~2. But the maximal depth of $Y$ is 2, as $A$
has global dimension~2. Therefore, by definition, $Y$ is  MCM over $A$. Since
$A$ is generalized AS regular of global dimension~2,  
$Y$ is projective as a left $A$-module
by Lemma \ref{xxlem3.13}. Since $M\cong
\Hom_{B^{\op}}(M^\vee,\Omega)$ is a $B$-direct summand of $Y$, 
$M$ is a direct summand of a projective left
$A$-module. If $M$ is indecomposable, then, up to a degree shift, it is a
direct summand of $_BA$. 
\end{proof}

\begin{corollary} 
\label{xxcor4.5}
Let $H$ be a semisimple Hopf algebra. Suppose that $A$ is an $H$-module 
algebra satisfying the hypotheses of Theorem~\ref{xxthm4.4}. Then,
up to isomorphism and a degree shift, the indecomposable {\rm MCM} 
left $A^{H}$-modules are precisely the
indecomposable direct summands of $A$ as a left $A^{H}$-module.
\end{corollary}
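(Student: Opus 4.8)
The plan is to deduce Corollary~\ref{xxcor4.5} directly from Theorem~\ref{xxthm4.4} by verifying that the pair $(A^H, A)$ satisfies the hypotheses of that theorem, and then translating its conclusion through the dictionary ``splitting subring'' $\leftrightarrow$ ``fixed ring.'' First I would check the hypotheses of Theorem~\ref{xxthm4.4}: by assumption $A$ is a generalized AS regular algebra of global dimension~$2$ (this is exactly what ``satisfying the hypotheses of Theorem~\ref{xxthm4.4}'' provides on the $A$-side). Then by Lemma~\ref{xxlem4.2}, since $H$ is semisimple and $A$ is a noetherian left $H$-module algebra, $B:=A^H$ is a splitting subring of $A$, with Reynolds operator given by the action of the integral of $H$. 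Note one should record that $A$ is noetherian: this follows since a generalized AS regular algebra is noetherian by hypothesis in Definition~\ref{xxdef3.9} (it is stated to be a ``noetherian commonly graded algebra''), so Lemma~\ref{xxlem4.2} applies.

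With both hypotheses verified, Theorem~\ref{xxthm4.4} applies verbatim with $B=A^H$: up to isomorphism and degree shift, the indecomposable MCM left $A^H$-modules are precisely the indecomposable $A^H$-direct summands of ${}_{A^H}A$. Since the corollary's claim is exactly this statement with $B$ spelled out as $A^H$, the proof is essentially complete after this invocation; I would only add the sentence identifying ``indecomposable direct summands of $A$ as a left $A^H$-module'' with the indecomposable $B$-direct summands of ${}_B A$ in the theorem.

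I do not expect any genuine obstacle here, since Corollary~\ref{xxcor4.5} is a pure specialization of Theorem~\ref{xxthm4.4}; the only point requiring a moment's attention is the bookkeeping that the phrase ``$A$ satisfying the hypotheses of Theorem~\ref{xxthm4.4}'' already supplies generalized AS regularity and $\operatorname{gldim} A = 2$, leaving only the splitting-subring property to be supplied by Lemma~\ref{xxlem4.2}. In summary, the proof reads: by Lemma~\ref{xxlem4.2}, $A^H$ is a splitting subring of $A$; since $A$ is generalized AS regular of global dimension~$2$ by hypothesis, Theorem~\ref{xxthm4.4} (applied with $B=A^H$) gives the conclusion.

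\begin{proof}
By Lemma~\ref{xxlem4.2}, since $H$ is a semisimple Hopf algebra and $A$ is a noetherian left $H$-module algebra, the fixed subring $A^{H}$ is a splitting subring of $A$. As $A$ satisfies the hypotheses of Theorem~\ref{xxthm4.4}, it is a generalized AS regular algebra of global dimension~$2$. Applying Theorem~\ref{xxthm4.4} with $B=A^{H}$ yields that, up to isomorphism and a degree shift, the indecomposable {\rm MCM} left $A^{H}$-modules are precisely the indecomposable $A^{H}$-direct summands of ${}_{A^{H}}A$, which is the assertion.
\end{proof}
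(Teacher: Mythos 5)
Your proof is correct and matches the paper's own argument exactly: the paper likewise invokes Lemma~\ref{xxlem4.2} to identify $A^H$ as a splitting subring of $A$ and then applies Theorem~\ref{xxthm4.4} with $B=A^H$. The extra bookkeeping you supply (that $A$ is noetherian so Lemma~\ref{xxlem4.2} applies) is a harmless and reasonable elaboration.
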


\begin{proof} By Lemma~\ref{xxlem4.2}, take $A^H$ to be the splitting
subring of $A$. Now apply Theorem~\ref{xxthm4.4}. 
\end{proof}

Theorem \ref{xxthmC} follows immediately from Corollary \ref{xxcor4.5}.

%%%%%%%%%%%%%%%%%%%%%%%%%%%%%%%%% SECTION 5
%%%%%%%%%%%%%%%%%%%%%%%%%%%%%%%%%%%%%%%%%%%%%%%%%%%%%%%%%%%%%%%%%%%%%%%%%%
%%%%%%%%%%%%%%%%%%%%%%%%%%%%%%%%%%%%%%%%%%%%%%%%%%%%%%%%%%%%%%%%%%%%%%%%%%%%%%%%%%%%%%%%%%%%%%%%%%%%%%%%%%%%%%%
%%%%%%%%%%%%%%%%%%%%%%%%%%%%%%%%%%%%%%%%%%%%%%%%%%%%%%%%%%%%%%%%%%%%%%%%%%%%%%%%%%%%%%%%%%%%%%%%%%%%%%%%%%%%%%%
%%%%%%%%%%%%%%%%%%%%%%%%%%%%%%%%%%%%%%%%%%%%%%%%%%%%%%%%%%%%%%%%%%%%%%%%%%%%%%%%%%%%%%%%%%%%%%%%%%%%%%%%%%%%%%%
%%%%%%%%%%%%%%%%%%%%%%%%%%%%%%%%%%%%%%%%%%%%%%%%%%%%%%%%%%%%%%%%%%%%%%%%%%%%%%%%%%%%%%%%%%%%%%%%%%%%%%%%%%%%%%%
%%%%%%%%%%%%%%%%%%%%%%%%%%%%%%%%%%%%%%%%%%%%%%%%%%%%%%%%%%%%%%%%%%%%%%%%%%%%%%%%%%%%%%%%%%%%%%%%%%%%%%%%%%%%%%%
%%%%%%%%%%%%%%%%%%%%%%%%%%%%%%%%%%%%%%%%%%%%%%%%%%%%%%%%%%%%%%%%%%%%%%%%%%%%%%%%%%%%%%%%%%%%%%%%%%%%%%%%%%%%%%%
\section{A noncommutative version of Leuschke's theorems} 
\label{xxsec5}

In this section we prove noncommutative versions, 
Theorem~\ref{xxthmD} (Theorem~\ref{xxthm5.4})  and Theorem~\ref{xxthmE} (Theorem~\ref{xxthm5.7}), of 
results of Leuschke \cite[Theorem~6 and Proposition~8]{Le1}. 
These results pertain to the global dimension and the 
{\it representation dimension} (Definition~\ref{xxdef5.5}) of 
endomorphism rings of MCM modules.  

\medbreak We begin with the following preliminary result.
Recall that in the notation $\End_{B}(M)$ or $\uEnd_{B}(M)$,
the module $M$ is considered as a right $B$-module. 

\begin{lemma}
\label{xxlem5.1} 
Let $B$ be an algebra that is module-finite over a central 
noetherian subalgebra $C$, and let $M$ be a finite right 
$B$-module. Then $\End_{B}(M)$ is  a noetherian PI algebra. 
If further, $B$ is commonly graded and $M$ is a graded right
$B$-module, then $\uEnd_{B}(M)$ has a balanced dualizing complex.
\end{lemma}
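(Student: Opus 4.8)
The plan is to prove the two assertions in sequence, the first being essentially a standard commutative-algebra-over-a-PI-center argument, and the second relying on the dualizing complex machinery recalled in Section~\ref{xxsec3}. First I would address the claim that $\End_B(M)$ is a noetherian PI algebra. Since $M$ is a finite right $B$-module and $B$ is module-finite over the central noetherian subalgebra $C$, the module $M$ is also a finite $C$-module, hence noetherian over $C$. Then $\End_B(M) \subseteq \End_C(M)$, and $\End_C(M)$ embeds into $\End_C(M)$ which, after choosing a surjection $C^{\oplus n} \twoheadrightarrow M$, is a $C$-subquotient of the matrix ring $M_n(C)$; in particular $\End_C(M)$ is a finite $C$-module. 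As $C$ is central in $\End_C(M)$ (it acts by $C$-linear endomorphisms), $\End_C(M)$ is module-finite over its noetherian center, hence noetherian (by the Artin--Tate lemma / standard PI arguments) and PI (it embeds in $M_n(C)$, which satisfies the standard identity $s_{2n}$). Finally $\End_B(M)$ is a subring of $\End_C(M)$ that is also a $C$-submodule, hence a finite $C$-module, so it too is noetherian PI. I would spell out just enough of this to make the reduction to $M_n(C)$ transparent.

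For the second assertion, assume in addition that $B$ is commonly graded and $M$ is a graded right $B$-module, and set $E := \uEnd_B(M)$. The strategy is to invoke Theorem~\ref{xxthm3.8}(2): a noetherian commonly graded algebra has a balanced dualizing complex if and only if $\cd(E)$ and $\cd(E^{\op})$ are finite and both $E$ and $E^{\op}$ satisfy $\chi$. So I must verify that $E$ is commonly graded, that it has finite cohomological dimension on both sides, and that it satisfies $\chi$ on both sides. That $E$ is commonly graded follows because $M$ is bounded below and locally finite, so $\uHom_B(M,M) = \bigoplus_i \uHom_B(M, M)_i$ with each graded piece finite-dimensional and the whole thing bounded below (this is the standard observation, analogous to the one made for $A\#H$ after Definition~\ref{xxdef1.3}); note $E$ may be $\mathbb{Z}$-graded rather than $\mathbb{N}$-graded, which is exactly why the commonly graded formalism was set up. For the $\chi$-condition and finiteness of $\cd$, the cleanest route is to observe that $C$ (or rather its image, a central graded noetherian subalgebra) sits inside $E$ with $E$ module-finite over it: $E$ is a finite $C$-module by the first part, so by Lemma~\ref{xxlem3.11}(1) we get $\cd(E) \leq \cd(C) < \infty$, and likewise $\cd(E^{\op}) < \infty$. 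For $\chi$: a commutative affine graded noetherian algebra satisfies $\chi$ (this is classical, e.g.\ \cite{AZ}), and by the change-of-rings argument in the proof of Lemma~\ref{xxlem4.3}(2) — using the change-of-rings spectral sequence $\uExt^p_C(\uTor^E_q(C, S), N) \Rightarrow \uExt^{p+q}_E(S, N)$, or more simply the fact from \cite[Theorem~8.3(2)]{AZ} that $\chi$ passes up a module-finite extension — both $E$ and $E^{\op}$ inherit $\chi$ from $C$. Then Theorem~\ref{xxthm3.8}(2) delivers the balanced dualizing complex.

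The main obstacle I anticipate is bookkeeping the grading on $E$ carefully enough to be sure the hypotheses of the cited results (which are stated for connected graded or commonly graded algebras) genuinely apply: one must check $E$ is bounded below and locally finite as a $\mathbb{Z}$-graded algebra, and that the central subalgebra of $E$ one uses for the module-finiteness and $\chi$ arguments is itself commonly graded and noetherian — the natural choice is the image of $C$ in $E$ via scalar multiplication, which is a graded quotient of $C$, hence still commutative affine noetherian and commonly graded. Once the grading is pinned down, everything reduces to citing Lemma~\ref{xxlem3.11}(1), the classical fact that commutative affine graded rings satisfy $\chi$ and have finite cohomological dimension, the $\chi$-ascent along module-finite extensions, and Theorem~\ref{xxthm3.8}(2); no genuinely new computation is needed.
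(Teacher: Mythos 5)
Your proof of the first assertion is essentially the paper's: both arguments reduce to showing $\End_B(M)$ is module-finite over the central image of $C$ (the paper applies $\Hom_{B^{\op}}(-,M)$ to a surjection $B^{\oplus n}\to M$ to realize $\End_B(M)$ as a $C$-submodule of $M^{\oplus n}$; you pass through $\End_C(M)$ and a presentation $C^{\oplus n}\twoheadrightarrow M$), and then both invoke the standard fact that a ring module-finite over a central commutative noetherian subring is noetherian PI. One small caution: your phrase that $\End_C(M)$ is ``a $C$-subquotient of $M_n(C)$'' is not by itself a reason for the PI property (a module subquotient of a PI ring need not inherit identities as a ring); the clean statement is simply module-finiteness over the central subring, which is what the paper records. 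For the second assertion the routes genuinely diverge: the paper disposes of it in one line by citing a graded version of \cite[Corollary~0.2]{WZ1}, whereas you verify Van den Bergh's existence criterion (Theorem~\ref{xxthm3.8}(2)) directly --- $E$ is commonly graded, $\cd(E)$ and $\cd(E^{\op})$ are finite via Lemma~\ref{xxlem3.11}(1) applied to the central image of $C$, and $\chi$ ascends along the module-finite extension as in the proof of Lemma~\ref{xxlem4.3}(2). Your version is more self-contained (it is, in effect, the proof of the cited result in the graded setting) at the cost of having to check that the classical $\chi$ and finite-$\cd$ facts for commutative noetherian graded rings persist in the commonly graded, merely $\mathbb{Z}$-graded setting; the paper's citation hides exactly that bookkeeping. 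Both arguments are sound.
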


\begin{proof} Note that $C$ is a noetherian commutative algebra.
Since $C$ is central in $B$ there is a natural map from $C$
to $\End_{B}(M)$. Since $B$ is finite over $C$, so is $M$.
Applying $\Hom_{B^{\op}}(-,M)$ to a surjective map $B^{\oplus n}
\to M$, one sees that $\End_{B}(M)$ is a $C$-submodule of
$\Hom_{B^{\op}}(B^{\oplus n},M)\cong M^{\oplus n}$, 
which is finite over $C$. Hence $\End_{B}(M)$ is finite
over $C$. Therefore $\End_{B}(M)$ is a noetherian PI ring. 

Since $B$ and $M$ are graded, so is $\uEnd_{B}(M)$. 
A graded version of \cite[Corollary~0.2]{WZ1} implies that
$\uEnd_{B}(M)$ has a balanced dualizing complex.
\end{proof}

Now the main results of this section hold for algebras of 
{\it finite CM type}, which we define next.

\begin{definition} 
\label{xxdef5.2}
We say that a  (left) noetherian commonly graded algebra $A$ is 
of {\it finite Cohen-Macaulay \textnormal{(}finite CM\textnormal{)} type} 
if $A$ has, up to a degree shift, finitely many non-isomorphic 
indecomposable MCM modules.
\end{definition}

\begin{example} 
\label{xxex5.3}
Let $A$ be a connected graded or generalized AS regular algebra 
of dimension 2 that admits an action of a semisimple Hopf 
algebra $H$. Then the fixed subring $B:=A^H$ is of finite CM 
type by Corollary~\ref{xxcor4.5}. Since the category
of finitely generated graded  left $B$-modules is a 
Krull-Schmidt category, the $B$-module $A$ 
has only finitely many direct summands. 
\end{example}

Next we prove Theorem \ref{xxthmD}.
\begin{theorem}
\label{xxthm5.4} 
Let $B$ be a noetherian commonly graded 
algebra such that $B$ has a balanced dualizing complex. Suppose that
$B$ is {\rm CM} and of finite CM type.
Let $M$ be a finite direct sum of {\rm MCM} right $B$-modules that contain
at least one copy of each {\rm MCM} module, up to a degree shift. Let 
$E:=\End_{B}(M)$.
\begin{itemize}
\item[(1)] If $E$ is right noetherian, then $E$ has right global
dimension at most $\max\{2,d\}$, where $d=\cd(B)$.
\item[(2)] If $d\geq 2$, then the right global dimension of $E$ is equal to $d$.
\end{itemize}
\end{theorem}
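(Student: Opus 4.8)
The plan is to mimic Leuschke's argument for the commutative case \cite[Theorem~6]{Le1}, translating it into the commonly graded setting using the local cohomology machinery of Section~\ref{xxsec3}. The key external input will be the graded Auslander correspondence of Proposition~\ref{xxpro2.5}, which tells us that $E = \uEnd_B(M)$ (or, via the duality of Lemma~\ref{xxlem1.6}, its left-module analogue) detects $\add_{\grm\mhyphen B}M$ as its category of finite graded projectives. First I would reduce the computation of $\operatorname{injdim}$ or $\operatorname{gldim}$ of $E$ to a statement about resolving arbitrary $E$-modules by the images $e_M(X)$ with $X\in\add_{\grm\mhyphen B}M$. Concretely, for a finite graded right $E$-module $N$, take a graded projective presentation $e_M(X_1)\to e_M(X_0)\to N\to 0$; since $e_M$ is fully faithful on $\add_{\grm\mhyphen B}M$ by Proposition~\ref{xxpro2.5}(1), the presentation comes from a map $X_1\to X_0$ in $\add_{\grm\mhyphen B}M$, and I would set $Z:=\ker(X_1\to X_0)$ (a finite graded $B$-module) together with the image $Y$, fitting into two short exact sequences of $B$-modules.

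The heart of the argument is then a \emph{depth count}. Since $B$ is CM of cohomological dimension $d$ and $M$ (hence each $X_i$) is MCM, $\dep_B(X_i)=d$. Apply Lemma~\ref{xxlem3.15} to the exact sequence $0\to Z\to X_1\to Y\to 0$: one gets $\dep(Z)\geq\min\{\dep(Y)+1,\dep(X_1)\}$, and, chasing $0\to Y\to X_0\to(\text{cokernel})$, one controls $\dep(Y)$ as well. The upshot, exactly as in \cite{Le1}, is that after \emph{two} steps of the projective resolution over $E$ the relevant kernel $Z$ has become a \emph{second syzygy}, hence has $B$-depth $\geq\min\{2,d\}$; when $d\geq 2$ this forces $\dep_B(Z)=d$, i.e. $Z$ is MCM, hence $Z\in\add_{\grm\mhyphen B}M$ by the definition of $M$ (it contains a copy of every MCM module up to shift, and $\add_{\grm\mhyphen B}M$ is closed under summands and shifts since $\grm\mhyphen B$ is Krull–Schmidt). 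Therefore $e_M(Z)$ is a finite graded projective $E$-module, and the projective resolution of $N$ terminates. Translating the graded statement to the ungraded one via the fact that $E$ is bounded below and locally finite (commonly graded), and that over such algebras graded projective dimension and ungraded projective dimension agree for finite modules, yields $\operatorname{r.gldim}(E)\leq\max\{2,d\}$. When $d<2$ the same run gives $\leq 2$; the bound $\max\{2,d\}$ covers both.

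For part~(2), the lower bound $\operatorname{r.gldim}(E)\geq d$ when $d\geq 2$: I would produce an $E$-module of projective dimension exactly $d$. The natural candidate is $e_M(B)$ modulo its radical, or more robustly the simple $E$-modules corresponding to the non-projective syzygies; concretely, $B$ itself is MCM (as $B$ is CM), so $B\in\add_{\grm\mhyphen B}M$ and $e_M(B)$ is a projective $E$-module, and one computes the projective resolution of an appropriate simple $E$-module by applying $e_M$ to a minimal MCM approximation / syzygy sequence of a $B$-module that is \emph{not} MCM but whose $d$-th syzygy is MCM — for instance the trivial module $\kk$, whose syzygies stabilize to MCM exactly at step $d$ (by the Auslander–Buchsbaum equality $\dep+\pdim=d$ over the generalized AS regular cover, i.e. over $B$ itself using $\cd(B)=d$). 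Pushing that resolution through the fully faithful $e_M$ gives an $E$-module whose minimal projective resolution has length exactly $d$. Alternatively one can cite \cite[Lemma~3.4]{WZ3} (the noncommutative Auslander–Buchsbaum formula) together with the existence of a balanced dualizing complex to pin the injective dimension of $E$ at $d$ and invoke symmetry.

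\textbf{Main obstacle.} The delicate point is the passage between the graded and ungraded global dimensions, and the bookkeeping of degree shifts: $E$ is only $\mathbb{Z}$-graded (not $\mathbb{N}$-graded), so I must use the commonly graded formalism carefully — in particular that $E$ is bounded below and locally finite (Lemma~\ref{xxlem5.1} gives the dualizing complex; commonly gradedness of $E$ needs $M$ commonly graded and the argument of Lemma~\ref{xxlem5.1}). The second subtlety is verifying that the second-syzygy module $Z$ in the $E$-resolution really is a finite graded \emph{$B$}-module of the claimed depth: this requires that $e_M$ is not merely fully faithful but that $e_M^{-1}$ of a kernel of projectives is again the $B$-module kernel, which follows from Proposition~\ref{xxpro2.5}(1) only because both $X_i\in\add_{\grm\mhyphen B}M$ — one must be sure the relevant terms stay inside $\add_{\grm\mhyphen B}M$ and not merely in $\grm\mhyphen B$, and that is precisely where the finite CM type hypothesis (so that $\add_{\grm\mhyphen B}M$ catches every MCM module) is indispensable.
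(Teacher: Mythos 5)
There is a genuine gap in your upper-bound argument, and it is located exactly at the step you flag as "the upshot": you claim that after \emph{two} steps the kernel $Z=\ker(X_1\to X_0)$ has $B$-depth $\geq\min\{2,d\}$ and that "when $d\geq 2$ this forces $\dep_B(Z)=d$." That implication is false: for $d\geq 3$, knowing $\dep(Z)\geq 2$ does not make $Z$ maximal Cohen--Macaulay, so $Z$ need not lie in $\add_{\grm\mhyphen B}M$ and $e_M(Z)$ need not be projective. If your two-step argument were valid it would prove $\gldim(E)\leq 2$ unconditionally, which contradicts both part (2) of the theorem and Example~\ref{xxex5.10}, where $d=3$ and $\gldim(E)=3$. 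The paper's proof (following Leuschke) instead takes the first $d-1$ steps of the projective $E$-resolution, lifts the \emph{entire} complex to a complex $N_{d-1}\to\cdots\to N_0$ of MCM right $B$-modules via Proposition~\ref{xxpro2.5}, and only then applies the iterated depth estimate of Lemma~\ref{xxlem3.15} to conclude that the $d$-th kernel $N_d$ has depth $\geq d$, hence is MCM and lies in $\add_{\grm\mhyphen B}M$. This longer lift forces a step your sketch omits entirely: one must prove the lifted complex $N_\bullet$ is exact (left-exactness of $\uHom(M,-)$ does not give this beyond the first kernel). The paper does so by noting that $B$ being CM makes every indecomposable summand of $B$ an MCM module, hence a summand of $M$; so $M$ contains a progenerator $M_0$, and $\uHom(M_0,-)$ is a Morita equivalence that detects exactness. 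This is precisely where the hypothesis that $B$ itself is CM enters, and it is absent from your outline.

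Your treatment of the lower bound is also on shaky ground: you propose to resolve $\kk$ and invoke an Auslander--Buchsbaum equality "over $B$ itself," but $B$ is only assumed CM, not (generalized AS) regular, so $\kk$ may have infinite projective dimension over $B$ and no such equality is available. The paper's argument takes an arbitrary nonzero finite-dimensional right $B$-module $Y$ (so $\dep(Y)=0$), presents it by projective $B$-modules $N_1\to N_0\to Y\to 0$, applies $e_M$, and shows the resulting $E$-module has projective dimension exactly $d$: if it were less, the lifted exact sequence would terminate with $N_d=0$ and Lemma~\ref{xxlem3.15} would force $\dep(Y)\geq \dep(N_{d-1})-(d-1)>0$, a contradiction. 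Your graded-versus-ungraded concerns are legitimate but secondary; the essential missing content is the length-$d$ syzygy argument together with the exactness of the lifted complex.
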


\begin{proof} (1) We modify the proof given in \cite[Theorem 6]{Le1}.
The case $d=1$ is easy, so we assume that $d\geq 2$. 

Let $X=X_E$ be a finite right $E$-module, and consider the first $d-1$
steps in the projective resolution of $X_E$:
\begin{equation}
\label{E5.4.1}\tag{E5.4.1}
P_{\bullet}:\quad 
P_{d-1}\xrightarrow{f_{d-1}}P_{d-2}\to \cdots \to P_{1}
\xrightarrow{f_1} P_0
\end{equation}
with $X={\text{coker}}\; f_1$. Since $E$ is right noetherian by assumption,
we may assume that each $P_i$ is a finite projective right
$E$-module. By Proposition \ref{xxpro2.5}(3),  \linebreak
$P_i=\uHom_{\grm \mhyphen B^{\op}}(M,N_i)$
for some $N_i\in \add_{\grm \mhyphen  B^{\op}} M$. By Lemma~\ref{xxlem3.11}(3)
each $N_i$ is a MCM right $B$-module. By Proposition \ref{xxpro2.5}(3),
$f_i=\uHom_{\grm\mhyphen B^{\op}}(M,g_i)$, where 
$g_i\in \uHom_{\grm \mhyphen B^{\op}}(N_i,N_{i-1})$.
Putting these facts together, we have the following sequence of morphisms between
MCM graded right $B$-modules:
\begin{equation*}
N_{\bullet}:\quad 
N_{d-1}\xrightarrow{g_{d-1}}N_{d-2}\to \cdots \to N_{1}
\xrightarrow{g_1} N_0
\end{equation*}
such that $\uHom_{\grm \mhyphen B^{\op}}(M,N_{\bullet})=P_{\bullet}$. 

Since $B$ is CM, every indecomposable direct summand of 
$B$ is an MCM module, which must be a direct summand of $M$. 
Hence $M=M_0\oplus M_1$, where
$M_0$ is  a graded progenerator of $B$.  Therefore $P_{\bullet}=
\uHom_{\grm\mhyphen B^{\op}}(M,N_{\bullet})$
contains $\uHom_{\grm\mhyphen B^{\op}}(M_0,N_{\bullet})$ as a 
direct summand. Since $\uHom_{\grm\mhyphen B^{\op}}(M,N_{\bullet})$
is exact, so is $\uHom_{\grm\mhyphen B^{\op}}(M_0,N_{\bullet})$. 
Note that $\uHom_{\grm\mhyphen B^{\op}}(M_0,-)$ is an equivalence of 
categories by Morita theory.  Therefore, $N_{\bullet}$ is exact. 
Let $N_d$ be the kernel of $g_{d-1}$. Then we have an exact 
sequence
\begin{equation}
\label{E5.4.2}\tag{E5.4.2}
0\to N_d\to N_{d-1}\xrightarrow{g_{d-1}}N_{d-2}\to \cdots \to N_{1}
\xrightarrow{g_1} N_0.
\end{equation}
By Lemma \ref{xxlem3.15}, $N_d$ is a MCM. Because $M$ contains all 
indecomposable MCM modules,  $N_d$ is in 
$\add_{\grm\mhyphen  B^{\op}} M$. Applying
$\uHom_{\grm\mhyphen B^{\op}}(M,-)$ to \eqref{E5.4.2}, one has an exact sequence
$$0\to \uHom_{\grm\mhyphen B^{\op}}(M,N_d)\to  
P_{d-1}\xrightarrow{f_{d-1}}P_{d-2}\to \cdots \to P_{1}
\xrightarrow{f_1} P_0\to X\to 0.$$
Since $N_d\in \add_{\grm\mhyphen  B^{\op}} M$, 
$\uHom_{\grm\mhyphen  B^{\op}}(M,N_d)\in \grprm \mhyphen E$ by 
Proposition~\ref{xxpro2.5}(2). Therefore $X$ has projective
dimension at most $d$. This shows that $E$ has right global
dimension at most $d$. 

Next we show that the right global dimension of $E$ is $d$ if $d\geq 2$. 
Let $Y$ be 
a nonzero finite dimensional right $B$-module (so, $\dep(Y) =0$), and consider an exact sequence
$$N_1\xrightarrow{g_1}N_0\xrightarrow{g_0} Y\to 0,$$ where $N_0$ and 
$N_1$ are 
projective $B$-modules.
Let $X$ be the cokernel of the map $\uHom_{\grm\mhyphen B^{\op}}(M,g_1)$,  so we have a 
partial projective resolution $P_1\to P_0\to X$, where 
$P_i=\uHom_{\grm\mhyphen B^{\op}}(M,N_i)$.
Extend this to a projective resolution \eqref{E5.4.1}. 
We claim that the projective dimension
of $X$ is $d$. If not, we may assume that $P_{d}=0$. 
By the construction, as before, we have an exact sequence 
$N_{\bullet}$ with $N_d=0$. 
By Lemma~\ref{xxlem3.15}, $\dep(Y)\geq \dep(N_{d-1})
-(d-1)>0$, which contradicts  the fact that $\dep(Y)=0$.

\medbreak (2) This follows exactly as in the proof of \cite[Theorem~6]{Le1}.
\end{proof}

\begin{definition}
\label{xxdef5.5}
Let $A$ be a noetherian CM commonly graded algebra of depth
$d$ and  let $\omega=R^d\Gamma_{\fm}(A)^*$. The \emph{representation
dimension} of $A$ is defined to be
$$\repdim(A)=\inf \{\gldim\left(\End_{A}(A\oplus 
\omega\oplus M)\right) ~:~ M \text{ is a MCM left $A$-module}\}.$$
\end{definition}

We obtain the following generalization of a result of Leuschke 
\cite[Proposition~8]{Le1}  as one immediate consequence 
of Theorem \ref{xxthm5.4}.

\begin{corollary}
\label{xxcor5.6} Let $B$ be a noetherian, {\rm CM},
commonly graded algebra, of finite {\rm CM} type, that is 
module-finite over a central noetherian graded subalgebra. Then we 
have that $\repdim(B) \leq \max\{2,\cd(B)\}$.
\end{corollary}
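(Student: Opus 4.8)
The plan is to deduce Corollary~\ref{xxcor5.6} directly from Theorem~\ref{xxthm5.4} by exhibiting a suitable module $M$ to which the theorem applies and then checking that the endomorphism ring that appears in the definition of $\repdim$ coincides (or is comparable) with the ring $E$ appearing in Theorem~\ref{xxthm5.4}. First I would invoke Lemma~\ref{xxlem5.1}: since $B$ is module-finite over a central noetherian graded subalgebra $C$ and $M$ is a finite graded $B$-module (below, $M$ will be a finite direct sum of MCM modules, hence finite), the algebra $E:=\uEnd_B(M)$ is a noetherian PI algebra, and in particular right noetherian. This is exactly the extra hypothesis needed to apply Theorem~\ref{xxthm5.4}(1), so the finiteness-of-$E$ issue is handled at once.

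Next I would choose $M$. Because $B$ is of finite CM type, there are, up to degree shift, only finitely many indecomposable MCM right $B$-modules; let $M$ be the direct sum of one copy of each. Then $M$ satisfies the hypothesis of Theorem~\ref{xxthm5.4}: it is a finite direct sum of MCM modules containing at least one copy of each MCM module up to a degree shift. Combining this with the right-noetherian conclusion from Lemma~\ref{xxlem5.1}, Theorem~\ref{xxthm5.4}(1) gives that $E=\uEnd_B(M)$ has right global dimension at most $\max\{2,d\}$ where $d=\cd(B)$. The only remaining point is to connect $\gldim E$ (right) with the quantity $\repdim(B)$ as in Definition~\ref{xxdef5.5}: since the definition takes an infimum of $\gldim\left(\End_B(B\oplus\omega\oplus M')\right)$ over MCM modules $M'$, and our $M$ (which contains $B$ as a summand because $B$ is CM, hence $B$ is a sum of MCM modules which are all summands of $M$, and likewise contains $\omega=R^d\Gamma_{\fm}(B)^*$, which is MCM as it is the dualizing module) is of the required shape $B\oplus\omega\oplus(\text{MCM})$, the infimum is at most $\gldim E$.

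The main obstacle is a bookkeeping one: reconciling the left/right conventions between Definition~\ref{xxdef5.5} (which uses $\End_A$ with $A$ a left module and speaks of $\gldim$) and Theorem~\ref{xxthm5.4} (which uses $\End_B(M)$ with $M$ a right module and computes the \emph{right} global dimension). Since $E$ is noetherian PI, its left and right global dimensions agree (or one can pass to the opposite ring, using that $\uEnd_B(M)^{\op}\cong\uEnd_{B^{\op}}(M^*)$ via the duality of Lemma~\ref{xxlem1.6}-type arguments and Lemma~\ref{xxlem3.2}(6)), so this is not a genuine difficulty, merely a matter of stating the symmetry carefully. I would also note that one should verify $\omega$ and $B$ are indeed MCM over $B$: $B$ is MCM over itself by the CM hypothesis, and $\omega=R^d\Gamma_{\fm}(B)^*$ has depth $d=\cd(B)$ by local duality (Theorem~\ref{xxthm3.8}(6)) combined with the balanced dualizing complex, so both are summands (up to shift) of the chosen $M$.

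\begin{proof}
Since $B$ is of finite CM type, choose $M$ to be the direct sum of one copy of each indecomposable MCM right $B$-module (up to a degree shift); this is a finite direct sum of MCM modules, hence a finite graded right $B$-module. By Lemma~\ref{xxlem5.1}, $E:=\uEnd_B(M)$ is a noetherian PI algebra; in particular it is right noetherian. By construction $M$ contains at least one copy of each MCM module up to a degree shift, so Theorem~\ref{xxthm5.4}(1) applies and gives that the right global dimension of $E$ is at most $\max\{2,d\}$, where $d=\cd(B)$. As $E$ is noetherian PI, its left and right global dimensions coincide, so $\gldim(E)\leq\max\{2,d\}$.

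It remains to see that $\repdim(B)\leq\gldim(E)$. Write $\omega=R^d\Gamma_{\fm}(B)^*$ with $d=\cd(B)=\dep(B)$ (equality by the CM hypothesis). Since $B$ is CM, every indecomposable direct summand of $B$ is MCM, hence (up to a degree shift) a summand of $M$; thus $B$ is, up to degree shifts, a summand of $M$. Local duality (Theorem~\ref{xxthm3.8}(6)) together with the balanced dualizing complex shows $\omega$ is MCM, so $\omega$ is likewise a summand of $M$ up to a degree shift. Therefore $M\cong B\oplus\omega\oplus M'$ (after suitable degree shifts) for some MCM module $M'$, and
$$\repdim(B)=\inf\{\gldim(\End_B(B\oplus\omega\oplus N)):N\text{ MCM}\}\leq\gldim(\End_B(M))=\gldim(E)\leq\max\{2,\cd(B)\}.$$
This proves the corollary.
\end{proof}
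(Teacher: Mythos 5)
Your proof is correct and follows essentially the same route as the paper: the paper likewise sets $E=\End_B(B\oplus R^{\cd(B)}\Gamma_{\fm}(B)^*\oplus N)$ with $N$ the sum of the remaining indecomposable MCM modules, invokes Lemma~\ref{xxlem5.1} for noetherianity, and applies Theorem~\ref{xxthm5.4}. Your extra care in verifying that $\omega$ is MCM and in reconciling the left/right conventions only makes explicit points the paper leaves implicit.
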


\begin{proof}
Let $E:= \End_{B}(B\oplus R^{\cd(B)}\Gamma_{\mathfrak m}(B)^* \oplus N)$, 
where $N$ is the direct sum of the remaining indecomposable MCM 
left $B$-modules. Then $E$ is noetherian by Lemma~\ref{xxlem5.1}, and 
hence, $E$ has global dimension at most $\max\{2,\cd(B)\}$ by Theorem~\ref{xxthm5.4}.
\end{proof}

There are stronger results for graded CM algebras $B$, of finite CM type, 
when $\cd(B) = 2$.   We begin with a proof of Theorem \ref{xxthmE}, 
which is an easy consequence of Theorem \ref{xxthm5.4}.

\begin{theorem}
\label{xxthm5.7} Let $B$ be a noetherian commonly graded 
algebra that is {\rm CM} with $\cd(B)=2$, of finite CM type, and 
module-finite over a central noetherian graded subalgebra $C$.
Let $M$ be a finite direct sum of {\rm MCM} right $B$-modules that contains
at least one copy of each {\rm MCM} module, up to a degree shift. 
Then $E:=\uEnd_{B}(M)$ is a noetherian PI generalized AS 
regular algebra of dimension $2$.
\end{theorem}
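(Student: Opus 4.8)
The plan is to deduce Theorem~\ref{xxthm5.7} from the preceding results, principally Theorem~\ref{xxthm5.4}, Lemma~\ref{xxlem5.1}, and the local cohomology machinery of Section~\ref{xxsec3}. First I would record the ``easy'' structural facts about $E=\uEnd_B(M)$: since $B$ is module-finite over the central noetherian graded subalgebra $C$ and $M$ is a finite graded right $B$-module, Lemma~\ref{xxlem5.1} immediately gives that $E$ is noetherian, PI, and possesses a balanced dualizing complex. (One should note that $E$ is commonly graded but possibly only $\mathbb Z$-graded, which is why the commonly graded framework of Section~\ref{xxsec3} is needed throughout.) Next, since $\cd(B)=2$, Theorem~\ref{xxthm5.4}(1) applies to give $\gldim E \le \max\{2,d\}=2$, and Theorem~\ref{xxthm5.4}(2) gives the reverse inequality, so $\gldim E = 2$; in particular $E$ has finite global dimension.

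With finite global dimension and a balanced dualizing complex in hand, what remains is to verify the generalized AS Gorenstein condition of Definition~\ref{xxdef3.9}(1), i.e.\ that $R^i\Gamma_{\fm_E}(E)^\ast$ is concentrated in degree $d_E=\injdim E$ and equals there an invertible graded $E$-bimodule $\Omega_E$. Since $\gldim E=2<\infty$ we have $\injdim E<\infty$, and by Theorem~\ref{xxthm3.8}(7), $\cd(E)=\cd(E^{\op})=\injdim E$; I would argue that this common value is $2$. The plan for this is to transport cohomological dimension through the equivalence $\add_{\grm\mhyphen B^{\op}}M \simeq \grprm\mhyphen E$ of Proposition~\ref{xxpro2.5}(3): the summand of $M$ that is a progenerator of $B$ (which exists because $B$ is CM, so every indecomposable summand of $B$ is MCM, hence a summand of $M$, exactly as in the proof of Theorem~\ref{xxthm5.4}) lets one compare depths/local cohomology of $E$-modules with those of the corresponding MCM $B$-modules, all of which have cd equal to $\cd(B)=2$. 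Combined with $\gldim E=2$ and the noncommutative Auslander--Buchsbaum formula (as invoked in Lemma~\ref{xxlem3.13}), this pins $\cd(E)=\injdim E=2$.

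The heart of the argument, and the step I expect to be the main obstacle, is identifying $\Omega_E:=R^2\Gamma_{\fm_E}(E)^\ast$ and proving it is an invertible $E$-bimodule. The natural route is local duality (Theorem~\ref{xxthm3.8}(6)): $R^2\Gamma_{\fm_E}(E)^\ast = \RHom_E(E,D_E)[-2]$ is just the dualizing complex $D_E$ shifted, so invertibility of $\Omega_E$ is equivalent to $D_E$ being (a shift of) an invertible bimodule, i.e.\ to $E$ having finite global dimension together with $D_E$ having cohomology in a single degree. Here one can use that $E$ has finite global dimension to conclude, via the standard argument (cf.\ the proof of Proposition~\ref{xxpro3.12}(1) and \cite{YZ1}), that the balanced/rigid dualizing complex of a noetherian connected-or-commonly-graded algebra of finite global dimension is automatically of the form $\Omega_E(-\ell)[2]$ with $\Omega_E$ invertible --- this is precisely the generalized AS regular condition. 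So the real content is: \emph{finite global dimension $+$ balanced dualizing complex $\Rightarrow$ generalized AS regular}, which is the commonly graded analogue of the connected graded statement. I would therefore either cite/adapt \cite[Remark~3.6]{RRZ} and Proposition~\ref{xxpro3.12}(2)'s circle of ideas in the commonly graded setting, or give a short direct argument: a noetherian commonly graded $E$ with $\gldim E=d<\infty$ has $\fm_E$ of finite projective dimension, $\Ext^i_E(\kk_S, E)$ vanishes off a single degree for each graded simple $S$ by a minimal-resolution/Nakayama computation, and then local duality upgrades this to the bimodule statement $R^d\Gamma_{\fm_E}(E)^\ast\cong\Omega_E$ invertible. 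Assembling: $E$ is noetherian PI (Lemma~\ref{xxlem5.1}), has $\gldim E=2$ (Theorem~\ref{xxthm5.4}), has a balanced dualizing complex (Lemma~\ref{xxlem5.1}), and hence is generalized AS Gorenstein of injective dimension $2$ with finite global dimension $2$, i.e.\ generalized AS regular of dimension $2$, as claimed.
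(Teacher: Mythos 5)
Your opening moves match the paper's: Lemma~\ref{xxlem5.1} gives that $E$ is noetherian, PI, and has a balanced dualizing complex, and Theorem~\ref{xxthm5.4} gives $\gldim E=2$. Your route to $\cd(E)=\injdim(E)=2$ by transporting local cohomology through the equivalence $\add_{\grm\mhyphen B^{\op}}M\simeq\grprm\mhyphen E$ is vaguer than necessary; the paper gets $\dep(E)\geq 2$ in one line from Lemma~\ref{xxlem3.16} applied to $E=\uHom_{B^{\op}}(M,M)$ with $\dep(M)=2$, and then $\cd(E)\leq \gldim E=2$ forces $R^i\Gamma_{\fm}(E)$ to be concentrated in degree $2$. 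That part of your plan is recoverable.

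The genuine gap is in the step you yourself flag as the heart of the argument: invertibility of $\Omega:=R^2\Gamma_{\fm}(E)^{\ast}$ as an $E$-bimodule. The principle you propose to invoke --- finite global dimension plus a balanced dualizing complex implies generalized AS regular --- is false as stated (take $E_1\oplus E_2$ with the $E_i$ AS regular of different global dimensions: it is noetherian with finite global dimension and a balanced dualizing complex, but its local cohomology lives in two degrees, so Definition~\ref{xxdef3.9}(1)(iii) fails). Even after your CM step rules this out for the particular $E$ at hand, your ``short direct argument'' does not close the remaining distance: the claim that $\Ext^i_E(S,E)$ vanishes off a single degree ``by a minimal-resolution/Nakayama computation'' is unjustified (Nakayama controls $\Ext^i_E(S,\kk)$, not $\Ext^i_E(S,E)$; single-degree concentration of the latter is essentially the Gorenstein property being proved), and ``local duality upgrades this to the bimodule statement'' skips the actual content. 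The paper needs two specific inputs here. First, from $E\cong \RHom_E(D,D)\cong R\Gamma_{\fm}(\Omega)^{\ast}[-2]$ one reads off that $\Omega$ is MCM, hence projective on each side by the noncommutative Auslander--Buchsbaum formula of \cite{WZ3}. Second, one-sided projectivity of $\Omega$ is still not bimodule invertibility: the paper invokes the result of \cite{YZ} that the dualizing complex of a Gorenstein ring is a two-sided tilting complex, and only then does the stalk-complex form $D=\Omega[2]$ yield that $\Omega$ is an invertible bimodule. Neither step appears in your proposal, so the proof is incomplete at its decisive point.
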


\begin{proof} By Theorem \ref{xxthm5.4}, $E$ has right global
dimension 2. By Lemma \ref{xxlem5.1}, $E$ is noetherian PI, equipped
with a balanced dualizing complex. Hence, $E$ has left global
dimension 2. By Lemma \ref{xxlem3.16},
the depth (and thus cohomological dimension) of $E$ is 2, 
or equivalently, $R^i\Gamma_{\fm}(E)=0$ for $i=0,1$.

Let $\Omega$ be $R^2\Gamma_{\fm}(E)^*$. It remains to show
that $\Omega$ is invertible. By Theorem \ref{xxthm3.8}(3),
$D:=\Omega[2]$ is the balanced dualizing complex over $E$.
Then 
$$E=\RHom_{E}(D,D)\cong R\Gamma_{\fm}(D)
\cong R\Gamma_{\fm}(\Omega)^*[-2].$$
Therefore $R^i\Gamma_{\fm}(\Omega)^*=0$ for $i=0,1$,
and $R^2\Gamma_{\fm}(\Omega)^*=E$. Thus
$\Omega$ is MCM. Since $E$ is MCM, it is depth-homogeneous 
in the sense of \cite[page 521]{WZ3}. By \cite[Theorem~3.4]{WZ3},
$\Omega$ is a projective (left and right) $E$-module.
By the discussion before the proof of \cite[Theorem~0.2, page 561]{YZ},
since $E$ is Gorenstein, $D$ is also a two-sided tilting 
complex over $E$, or equivalently, an invertible complex over $E$ by 
\cite[Definition 2.1]{YZ}. Since $D$ is a stalk complex $\Omega[2]$ 
that is invertible, we obtain that $\Omega$ is an invertible 
$E$-bimodule. Therefore $E$ is generalized AS regular.
\end{proof}

\begin{corollary}
\label{xxcor5.8}
Let $B$ be a noetherian commonly graded 
algebra that is {\rm CM} with $\cd(B)=2$, of finite CM type, 
and module-finite over a central noetherian graded subalgebra $C$. 
Then the center $Z$ of $B$ is reduced.
\end{corollary}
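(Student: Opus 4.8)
The plan is to transfer the statement about $B$ into a statement about the endomorphism ring produced by Theorem~\ref{xxthm5.7}, and then to exploit that this ring has finite global dimension. First I would choose the test module with care: take $M = B \oplus M'$, where $M'$ is the direct sum of one copy of each indecomposable MCM right $B$-module. This is a finite direct sum since $B$ is of finite CM type, and $B_B$ is itself MCM because $B$ is CM with $\cd(B)=2$; in particular $M$ satisfies the hypotheses of Theorem~\ref{xxthm5.7}. Hence $E:=\uEnd_{B}(M)$ is a noetherian PI generalized AS regular algebra of dimension $2$, and in particular $\gldim E = 2 < \infty$. Letting $e\in E$ be the idempotent projecting $M$ onto its summand $B$, one has $eEe = \uEnd_{B}(B_B) = B$, since the graded endomorphisms of $B$ as a right module over itself are exactly left multiplications. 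Thus $B$ is realized as a corner ring of $E$.

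Next I would invoke the fact that a noetherian PI ring of finite global dimension is semiprime --- indeed a finite direct product of prime rings --- by a theorem of Stafford and Zhang. Hence $E$ is semiprime. It then remains to check the routine fact that a corner ring $eEe$ of a semiprime ring $E$ is again semiprime: if $I$ is an ideal of $eEe$ with $I^2=0$, then $I=eIe$ and $I(eEe)I\subseteq I^2 = 0$, so $IEI = eI(eEe)Ie = 0$ and therefore $(EIE)^2 = E(IEI)E = 0$; semiprimeness of $E$ forces $EIE=0$, whence $I\subseteq EIE = 0$. Consequently $B\cong eEe$ is semiprime.

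Finally, a semiprime ring has reduced center. For $z\in Z=Z(B)$ with $z^2=0$, centrality of $z$ gives $(Bz)^2 = BzBz = B^2z^2 = 0$, so the two-sided ideal $Bz$ is nilpotent and hence zero, forcing $z=0$; an arbitrary nilpotent central element reduces to this case by passing to a suitable power. This shows $Z$ is reduced (it is moreover noetherian, being a finite module over $C$, though this is not needed).

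The step I expect to be the main obstacle, or at least the one demanding the most care in invoking the literature, is the semiprimeness input for $E$: one needs that a noetherian PI algebra of finite global dimension has no nonzero nilpotent ideals. As an alternative, one might try to deduce semiprimeness directly from the generalized AS regular structure on $E$ in dimension $2$, decomposing $E$ into a finite product of prime pieces in the spirit of Remark~\ref{xxrem3.10}(3); but the PI route above is the most self-contained. Everything else --- the choice of $M$, the corner identification, the passage from semiprime to reduced center --- is routine.
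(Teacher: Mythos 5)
Your proposal follows the same backbone as the paper's proof: form $E=\uEnd_{B}(M)$ with $B$ among the summands of $M$, apply Theorem~\ref{xxthm5.7} to conclude that $E$ is a noetherian PI generalized AS regular algebra of dimension $2$, deduce that $E$ is semiprime from the Stafford--Zhang theorem, and transfer semiprimeness back. The transfer step is where you genuinely differ. The paper observes that $Z$ acts faithfully and centrally on $M$ (faithfully because $B$ is a direct summand), so $Z$ embeds into the center of the semiprime ring $E$ and is therefore reduced. You instead identify $B$ with the corner ring $eEe$ and check that a corner of a semiprime ring is semiprime, which yields the stronger conclusion that $B$ itself is semiprime; your corner-ring computation and the final reduction from ``semiprime'' to ``reduced center'' are both correct. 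This variant is slightly longer but more informative.

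One input does need correction, and it is exactly the step you flagged as delicate. The statement ``a noetherian PI ring of finite global dimension is semiprime'' is false: the upper triangular matrices $T_2(\kk)$ form a (commonly graded) noetherian PI ring of global dimension $1$ with a nonzero nilpotent ideal. The semiprimeness of $E$ cannot come from finiteness of global dimension plus PI alone; what the paper invokes, namely \cite[Theorem~1.4(1)]{StZ} in the form valid for generalized AS regular algebras, uses the Gorenstein/Cohen--Macaulay structure of $E$ --- in particular the invertibility of $\Omega=R^{2}\Gamma_{\fm}(E)^{*}$ supplied by Theorem~\ref{xxthm5.7}. So the route you offered only as an ``alternative'' is in fact the necessary one, and it is what the paper does; with that substitution your argument is complete.
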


\begin{proof} Let $M$ be a finite direct sum of all MCM right $B$-modules 
that contain at least one copy of each MCM module, up to a degree shift,
and a copy of $B$. Then the central action of $Z$ on $M$ is 
faithful. Let $E:=\End_{B^{\op}}(M)$. By Theorem \ref{xxthm5.7}, $E$ 
is generalized AS regular and is PI. By 
\cite[Theorem~1.4(1)]{StZ} (which holds for 
generalized AS regular algebras), $E$ is semiprime. Hence, any
central subring of $E$, including $Z$, is reduced.
\end{proof}

\begin{remark} \label{xxrem5.9}
Theorem~\ref{xxthm5.7} and Corollary~\ref{xxcor5.8} apply 
to the fixed subring 
$B = A^H$ arising from an action of a semisimple Hopf algebra  
$H$ on an AS regular algebra $A$ of dimension 2; see Example~\ref{xxex5.3}.
\end{remark}

Theorem \ref{xxthm5.7} fails for $\cd(B)\geq 3$, even in the
commutative case, as the following example demonstrates.

\begin{example}
\label{xxex5.10}
\cite[Example 11]{Le1}
Let $B$ be the subring of $A=\kk[x,y,z]$ generated by 
degree two elements,
namely, $B=\kk[x^2,xy,y^2,xz,yz,z^2]$. 
(The subring $B$ arises as the fixed subring  of $A$ 
under the $\mathbb{Z}_2$-action given by negation.) 
Then $B$ has cohomological dimension~3 
(e.g. by Lemma~\ref{xxlem3.15}), and $B$ has 
three MCM graded 
modules, up to degree shift and isomorphism. 
When $M$ is the direct
sum of these three MCM modules, we obtain 
that $E:=\uEnd_B(M)$ has global dimension 3. 
However, $E$ also has depth~2  (which can be 
checked with Macaulay 2, as mentioned in \cite{Le1}).  
Therefore $E$ is not CM, and hence not AS regular by 
Lemma~\ref{xxlem3.11}(4).
\end{example}

\subsection*{Acknowledgments}

The authors would like to thank Graham Leuschke 
for many useful conversations on the subject and thank the referee 
for his/her very careful reading and extremely valuable comments.
C. Walton and J.J. Zhang were supported by the US National Science
Foundation: NSF grants DMS-1550306 and both DMS-0855743 and DMS-1402863
respectively. E. Kirkman was supported by Simons Grant \#208314.\\\\

\bibliography{hopfmckayII}

\end{document}